\numberwithin{equation}{section}
\newtheorem{theorem}[subsection]{Theorem}
\newtheorem{lemma}[subsection]{Lemma}
\newtheorem{proposition}[subsection]{Proposition}
\newtheorem{corollary}[subsection]{Corollary}
\newtheorem{remark}[subsection]{Remark}
\theoremstyle{definition}
\theoremstyle{remark}
\newtheorem{example}[subsection]{Example}
\newcommand{\F}{{\mathbb F}}
\newcommand{\C}{{\mathbb C}}
\newcommand{\Z}{{\mathbb Z}}
\newcommand{\M}{{M_m \otimes M_n}}
\newcommand{\MZ}{{M_m(\Z) \otimes M_n(\Z)}}
\newcommand{\V}{{V_m \otimes V_n}}
\newcommand{\field}{{\mathbb{F}}}
\newcommand{\kfield}{{\mathbb{F}}}
\newcommand{\GL}{{\rm GL}}
\newcommand{\SL}{{\rm SL}}
\newcommand{\Det}{{\rm Det}}
\newcommand{\KM}[1]{{{\mathcal K}M_{#1}}}
\newcommand{\KV}[1]{{{\mathcal K}V_{#1}}}
\newcommand{\Sym}{{\rm Sym}}
\newcommand{\tr}{{\rm Tr}}
\newcommand{\norm}{{\boldmath N}^{C_p}}
\DeclareMathOperator{\Span}{\rm span}
\DeclareMathOperator{\gr}{\rm gr}
\newcommand{\LT}{\mathop{\rm LT}}
\newcommand{\Deg}{\mathop{\deg_*}}
\newcommand{\LM}{\mathop{\rm LM}}
\renewcommand{\b}[1]{{\overline{#1}}}
\newcommand{\binomial}[2]{\genfrac{(}{)}{0pt}{}{#1}{#2}}
\newcommand\name[1]{{\sc #1\/}}
\newcommand{\FF}{{\mathcal F}}
\newcommand{\N}{\mathbb{N}}
\newcommand{\Q}{\mathbb{Q}}
\newcommand{\fp}{{\mathbb{F}}_p}
\newcommand{\Fp}{{\mathbb{F}}_p}
\newcommand{\Rsl}{{\text{Rep}_{\C \,\SL_2(\C)}}}
\newcommand{\Rz}{{\text{Rep}'_{\Q \,\Z}}}
\newcommand{\Rcp}{{\text{Rep}_{\Fp\, C_p}}}
\newcommand{\Ch}{\text{U}}  %Chebyshev polynomials of 2nd kind
\newcommand{\var}[1]{\widetilde{#1}}
\title[Invariants of the modular cyclic group ]{Invariants for the modular cyclic group of prime order via classical invariant theory}
\author[Wehlau]{David L. Wehlau}
\address{Department of Mathematics and Computer Science \\ \hfil\break\indent
        Royal Military College \\ King\-ston, Ontario, Canada \\ K7K 5L0
             }
\email{wehlau@rmc.ca}
\subjclass[2010]{13A50, 20C20}
\keywords{modular invariant theory, cyclic group, classical invariant theory, Roberts' isomorphism}
\begin{document}

\advance\vsize by -1cm

\begin{abstract}
Let $\field$ be any field of characteristic $p$.
It is well-known that there are exactly $p$ inequivalent indecomposable representations
$V_1,V_2,\dots,V_p$ of $C_p$ defined over $\F$.
Thus if $V$ is any finite dimensional $C_p$-representation there are non-negative integers
$0\leq n_1,n_2,\dots, n_k \leq p-1$ such that $V \cong \oplus_{i=1}^k V_{n_i+1}$.
It is also well-known there is a unique (up to equivalence) $d+1$ dimensional irreducible
complex representation of $\SL_2(\C)$ given by its action on the space $R_d$ of $d$ forms.
Here we prove a conjecture, made by R.~J.~Shank, which reduces the
computation of the ring of $C_p$-invariants $\field[ \oplus_{i=1}^k  V_{n_i+1}]^{C_p}$ to the computation
of the classical ring of invariants (or covariants) $\C[R_1 \oplus (\oplus_{i=1}^k  R_{n_i})]^{\SL_2(\C)}$.
This shows that the problem of computing modular $C_p$ invariants is equivalent to the problem of computing
classical $\SL_2(\C)$ invariants.

    This allows us to compute for the first time the ring of invariants for many representations of $C_p$.
  In particular, we easily obtain from this generators for the rings of vector invariants
  $\field[m\,V_2]^{C_p}$, $\field[m\,V_3]^{C_p}$ and $\field[m\,V_4]^{C_p}$for all $m \in \N$.  This is the first computation
  of the latter two families of rings of invariants.
\end{abstract}

\maketitle

\tableofcontents

\section{Introduction}\label{intro}
   Let $B$ be a domain and $G$ a finite group.  Consider a $BG$-module $V$ which is a free $B$-module of rank $n$.
      We write $B[V]$ to denote the symmetric algebra $\Sym_B^\bullet(V^*)$ on the dual $V^*$.
       If we fix a basis $\{x_1,x_2,\dots,x_n\}$ for $V^*$ we may identify $B[V]$ with the  polynomial ring  $B[x_1,x_2,\dots,x_n]$.
   The action of $G$ on $V$ induces an action of $G$ on $V^*$.
    Extending this action algebraically we get a natural action of $G$ on $B[V]$.  We write $B[V]^G$ to denote the subring of
    invariants:
    $$B[V]^G := \{f \in \kfield[V] \mid g\cdot f = f\ \forall g \in G\}\ .$$
    Emmy Noether(\cite{N1,N2}) proved that the ring $B[V]^G$ is always finitely generated when $B$ is a field (and $G$ is finite).

    We are concerned here with finding generators for the ring of invariants when $B=\kfield$ is a field of characteristic $p$ and $G=C_p$ is the
    cyclic group of order $p$.
  We want to describe generating sets for the $\field[V]^{C_p}$ not just for certain values of $p$ but
  rather for arbitrary primes $p$.

   The group $C_p$ has, up to equivalence, exactly $p$ indecomposable representations over $\field$.
 There is one indecomposable representation
 $V_n$ of dimension $n$ for every $n=1,2,\dots,p$.
   The representation $V_1$ is the trivial representation and
 $V_p$ is the regular representation.  If $V$ contains a copy of $V_1$ as a summand, say $V=V_1 \oplus V'$ then
 it is easy to see that $\field[V]^{C_p} = \field[V_1] \otimes \field[V']^{C_p}$.  For this reason it suffices to consider
 representations $V$ which do not contain $V_1$ as a summand.  Such a representation is called {\em reduced}.
 In order to simply the exposition we will assume that our representations of $C_p$ are reduced.

  In 1913, \name{L. Dickson}(\cite{Dickson})
computed the rings of invariants $\field[V_2]^{C_p}$ and $\field[V_3]^{C_p}$.
In 1990, \name{David Richman}(\cite{richman}) conjectured
a set of generators for \hbox{$\F[V_2 \oplus V_2\oplus \dots\oplus V_2]^{C_p}$}
(for any number of copies of $V_2$).  \name{Campbell} and
\name{Hughes}(\cite{campbell-hughes}) proved in 1997 that Richman's conjectured
set of generators were correct.

  In 1998, \name{Shank}(\cite{shank}) introduced a new method exploiting SAGBI bases
and found generating sets for the two rings of invariants $\F[V_4]^{C_p}$ and $\F[V_5]^{C_p}$.
In 2002, \name{Shank} and \name{Wehlau}(\cite{ndec}) extended Shank's method to find generators for
basis for $\F[V_2\oplus V_3]^{C_p}$.  Since then, Shanks's method has been used to find generators for
$\F[V_3\oplus V_3]^{C_p}$ (\cite{CFW}) and for $\F[V_2 \oplus V_2 \oplus V_3]^{C_p}$
(\cite{DLW}).  Limitations of this method
using SAGBI bases mean that  it seems infeasible to use the method to compute invariants for any further representation
of $C_p$ except probably $\F[V_2 \oplus V_4]^{C_p}$.    See \cite{jim's contribution} and \cite{CFW} for discussions of some of these limitations.
Thus $\field[V]^{C_p}$ is known (for general $p$) only for the infinite family $V=m\,V_2= \oplus^m V_2$ and for seven other small representations.

  There is a deep connection between the invariants of $C_p$ in characteristic $p$ and the classical invariants of $\SL_2(\C)$.
   This connection was pointed out and studied extensively by \name{Gert Almkvist}.
  See \cite{Alm1, reciprocity theorems, Alm3, Alm4, some formulas, Alm6, A-F}.

  Here  we prove a conjecture of \name{R. J. Shank} which reduces the computation of generators
for $\field[V]^{C_p}$ to the classical problem of computing $\C[W]^{\SL_2(\C)}$.  Here $W$ is a representation of $\SL_2(\C)$ which is
easily obtained from $V$ and with $\dim_{\C} W = \dim_{\field} V + 2$.   The invariant ring $\C[W]^{\SL_2(\C)}$ is called a ring of covariants (definition below).
Since generators for $\field[V]^{C_p}$
yield generators for $\C[W]^{\SL_2(\C)}$, our proof of this conjecture demonstrates the equivalence of these two problems.

  After giving our proof of the conjecture we use the computation of $\C[W]^{\SL_2(\C)}$ by classical invariant
theorists (and others) for a number of rings of covariants to give generators for the corresponding rings $\field[V]^{C_p}$.
This greatly extends the above list of representations of $C_p$ whose rings of invariants are known.

\section{Preliminaries}\label{prelim}
  We consider the $n \times n$ matrix with all eigenvalues equal to 1 and consisting of a single Jordan block:
$$ \sigma_n(B) :=
   \begin{pmatrix}
   1 & 0 & 0 & \hdots & 0 & 0\\
   1 & 1 & 0 & \hdots & 0  & 0\\
   0 & 1 & 1 & \hdots & 0  & 0\\
   \vdots & \vdots & \vdots & \ddots & \vdots & \vdots\\
   0 & 0 & 0 & \hdots & 1 & 0\\
   0& 0 & 0 & \hdots  & 1 & 1\\
 \end{pmatrix}_{n \times n}
$$
where the entries of this matrix are elements of the ring $B$.  Thus $\sigma_n(B) \in \GL_n(B)$.

The matrix $\sigma_n(B)$ generates a cyclic subgroup of $\GL_n(B)$.
If the characteristic of $B$ is 0 then $\sigma_n(B)$ has infinite order and so generates a group
isomorphic to $\Z$.
 It is not too hard to see that if the characteristic of $B$
is $p > 0$ then the order of $\sigma_n(B)$ is $p^r$ where $r$ is the least non-negative integer
such that $p^r \geq n$.

\subsection{Certain $\Z$-modules}
 We write $M_{n}$ to denote the $n$ dimensional $\Q$ vector space which is a $\Z$-module 
 where $1 \in \Z$ is represented by the matrix $\sigma_n(\Q)$.   It is easy to see that 
 this $\Z$-module satisfies $M_n^* \cong M_n$. 

  We fix a basis of $M_n$ with respect to which the matrix takes its given form.
 We write $M_{n}(\Z)$ to denote the rank $n$ lattice in $M_n$ generated by integer linear combinations of this fixed basis.
Thus $M_n(\Z) \otimes_{\Z} \Q = M_n$.
The action of $\Z$ on $M_n$ restricts to an action of $\Z$ on $M_{n}(\Z)$.
We write $\sigma$ to denote $\sigma_n(\Z)$ and $\Delta$ to denote $\sigma - 1$, an element of the group algebra.

    Note that the one dimensional $\Q$ vector space $M_n^{\Z}$ is the kernel of the map $\Delta : M_n \to M_n$.
  Given $W\cong \oplus_{i=1}^s M_{n_i}$ and $\omega \in W^{\Z}$ we say that
the {\em length} of $\omega$ is $r$ and write $\ell(\omega)=r$
to indicate that $r$ is maximal such that $\omega \in \Delta^{r-1}(W)$.

  \subsection{$C_p$-modules in characteristic $p$}
 The book \cite{CW} includes a description of the representation theory of $C_p$ over a field of characteristic $p$.
 We use $\sigma$ to denote a generator of the group $C_p$.
 We also consider $\Delta := \sigma - 1$, an element of the group algebra of $C_p$.
Whether $\sigma$ is a generator of $\Z$ or $C_p$ will be clear from the context.  Similarly the meaning of $\Delta$ will be clear from the context.

  Up to isomorphism, there is one indecomposable $C_p$ module of dimension $n$ for each $1 \le n \le p$.
  We denote this module by $V_n$.  Note that $V_n \cong V_n^*$.    
  Also $V_n$ is projective if and only if it is free if and only if $n=p$.

  Since the group $C_p$ is generated by a single element all of whose eigenvalues are 1, it follows that every $C_p$-module $V$
  is in fact defined over the prime field $\field_p \subseteq \field$.
  Thus if we let $V(\field_p)$ denote the $\field_p$-points of $V$ we have $V = V(\field_p) \otimes_{\field_p} \field$.
  Since $\field[V]^{C_p}$ is the kernel of the linear operator $\Delta:\field[V] \to \field[V]$ we see that
  $\field[V]^{C_p} = (\field_p[V(\field_p)]^{C_p}) \otimes_{\field_p} \field$.
  Therefore it suffices to work over the prime field $\field_p$.  We do this from now on.

  Usually $V_n$ is defined as the $n$-dimensional $\field_p$-module with the action of $\sigma$ given by the matrix
$\sigma_n(\field_p)$.  We will use an equivalent description that is somewhat less common.
We will realize $V_n$ as the quotient ring
$\F_p[t]/(t^n)$ equipped with a $C_p$ action by declaring that $\sigma$ acts via multiplication by $1+t$.
Of course with respect to the basis of monomials in $t$, the matrix representation of multiplication by $1+t$ is $\sigma_n(\field_p)$.
  We use this description of $V_n$ since it has an obvious grading given by polynomial degree.
More precisely, given an element of $\F_p[t]/(t^n)$ we use its unique representation as a linear combination of
$\{1,t,t^2,\dots,t^{n-1}\}$ in order to give it a well-defined degree.
We will use this polynomial degree to realize a filtration of $V_n$.
We define $\FF_r(V_n) := \{h \in \F_p[t]/(t^n) \mid \deg(h) \geq r\}$ for $0 \leq r \leq n$.
Then $\{0\}=\FF_{n}(V_n) \subset \FF_{n-1}(V_n) \subset \dots \subset \FF_0(V_n) = V_n$.

Any element of $V_n \setminus \Delta(V_n)$ generates the cyclic $C_p$-module $V_n$.
Let $\alpha$ denote such a generator and define $\omega := \Delta^{n-1}(\alpha)$.
Then  $V_n^{C_p}$, the socle of $V_n$, is spanned by $\omega$.
Given a $C_p$-module $W$ and $\omega \in W^{C_p}$ we
define  $\ell(\omega)$ to be the maximum integer $r$ such that
say that $\omega \in \Delta^{r-1}(W)$.  This integer $\ell(\omega)$
is called the {\em length} of $\omega$.

\subsection{Reduction modulo $p$}
  Let $p$ be a prime integer.
     Since $M_n(\Z)$ is a free $\Z$-module of rank $n$, reduction modulo $p$
  yields a surjective map $\rho : M_{n}(\Z) \to V := \field_p^{n}$.  The action of $\Z$ on
   $M_{n}(\Z)$ (generated by the action of $\sigma_n(\Z)$)
    induces an action on $V$ (generated by the action of $\sigma_n(\field_p))$.     Suppose now that $1 < n \leq p$   so that $\sigma_n(\field_p)$ has order $p$ and so gives an action
   of $C_p$ on $V$.   This action of $C_p$ on $V$ is indecomposable and therefore
   $V \cong V_n$ as a $C_p$-module.  
     Thus reduction modulo $p$ yields a surjective map
     $\rho : M_n(\Z) \to V_n$.  Both $M_n$ and $V_n$ are self dual and thus 
     reduction modulo $p$ is also surjective on the duals: $\rho : M_n^*(\Z) \to V_n^*$. 
     This map of duals in turn induces a surjective map of coordinate rings
  $$\rho : \Sym_{\Z}^\bullet(M_n^*(\Z)) = \Z[M_{n}(\Z)] \to \F_p[V_{n}] = \Sym_{\field}^\bullet(V^*) \ .$$

  More generally, reduction modulo $p$ gives a surjection
   $$\rho :   \Z[\oplus_{i=1}^k M_{n_i}(\Z)] \to \F_p[\oplus_{i=1}^k V_{n_i}]\ .$$
Since $\rho\circ\sigma_n(\Q) = \sigma_n(\field_p) \circ \rho$ we see that
 $$ \rho   (\Z[\oplus_{i=1}^k M_{n_i}(\Z)]^{\Z}) \subseteq \F_p[\oplus_{i=1}^k V_{n_i}]^{C_p}\ .$$
   Since $C_p$ is not linearly reductive (over $\field_p$)
 this may in fact be a proper inclusion.
We call the elements of $\rho(\Z[\oplus_{i=1}^k M_{n_i}(\Z)]^{\Z})$ {\em integral invariants}.
We caution the reader that Shank(\cite{shank,jim's contribution}) calls elements of
$\Z[\oplus_{i=1}^k M_{n_i}(\Z)]^{\Z}$ integral invariants and
elements of $\rho(\Z[\oplus_{i=1}^k M_{n_i}(\Z)]^{\Z})$ rational invariants.

\subsection{Invariants of $C_p$}

  Let $V$ be a $C_p$ representation.  For each $f \in \field_p[V]$ we define an invariant called the {\em transfer} or {\em trace} of $f$
  and denoted by $\tr(f)$ by
  $$\tr(f) := \sum_{\tau \in C_p} \tau f\ .$$
  Similarly we define the {\em norm of $f$}, denoted $\norm(f)$ by
  $$\norm(f) = \prod_{\tau \in C_p} \tau f\ .$$

  Consider a representation $V = V_{n_1} \oplus V_{n_2} \oplus \dots \oplus V_{n_r}$ of $C_p$.
 For each summand $V_{n_i}$ choose a generator $z_i$ of the dual cyclic $C_p$-module $V^*_{n_i}$,
 i.e., choose $z_i \in V^*_{n_i} \setminus \Delta(V^*_{n_i})$.    Define $N_i := \norm(z_i)$ for $i=1,2,\dots,r$.
  Later we will study $C_p$-invariants using a term order.  For a summary of term orders see \cite[Chapter 2]{CLO}.
We will always use a graded reverse lexicographic order with $z_i > \Delta(z_i) > \dots > \Delta^{n_i-1}(z_i)$ for
 all $i=1,2,\dots,r$.
    We denote the lead term of an element $f \in \field_p[V]$ by $\LT(f)$ and the lead monomial of
 $f$  by $\LM(f)$.  We follow the convention that a monomial is a product of variables.

 \section{The Conjecture}
    Let  $V=V_{n_1} \oplus V_{n_2} \oplus \dots \oplus V_{n_r}$ be a $C_p$-module.
   We have seen three ways to construct $C_p$ invariants: norms, traces and integral invariants.
   \name{R.J.~Shank}(\cite[Conjecture~6.1]{shank}) conjectured that  $\field_p[V]^{C_p}$ is generated by the norms  $N_1,N_2,\dots,N_r$
   together with a finite set of integral invariants and a finite set of transfers.  Originally Shank stated his conjecture only for $V$
   indecomposable but he later asserted it for general $C_p$-modules (\cite[\S 3]{jim's contribution}).
   Our main result here is to prove this conjecture.  We then apply the result to obtain generating sets for a number of
   $C_p$-modules $V$.

\section{Classical Invariant Theory of $\SL_2(\C)$}\label{classical}

Here we consider representations of the classical group $\SL_2(\C)$.  There are many
good introductions to this topic.  For our purposes the book by \name{Procesi}(\cite{Pr}) is especially well suited
since it emphasizes an invariant theoretic approach.
The results of this section are well-known.

Let $R_1$ denote the defining representation of $\SL_2(\C)$ with basis $\{X,Y\}$.
Define $R_d := \Sym^d(R_1)$ to be the space of homogeneous forms of degree $d$ in $X$ and $Y$.
The action of $\SL_2(\C)$ on $R_1$ induces an action on $R_d$.  This
action\footnote{In fact, classically the formula used was
$\begin{pmatrix}a & b\\
c & d\end{pmatrix} \cdot f(X,Y) = f(aX+bY,cX+dY)\ .$  This yields a right action and since we prefer left actions
we use the other formula.  It is clear the two actions are equivalent and have the same ring of invariants.}
 is given by
$$\begin{pmatrix}a & b\\
c & d\end{pmatrix} \cdot f(X,Y) = f(aX+cY,bX+dY)\ .$$

 \name{Gordan}\cite{G} showed that the algebra $\C[W]^{\SL_2(\C)}$ is finitely generated for any finite dimensional representation $W$ of $\SL_2(\C)$.
  The algebra $(\Sym^\bullet(R_1) \otimes \C[W])^{\SL_2(\C)}$ is known as the ring of covariants of $W$.
    This ring was a central object of study in classical invariant theory.   Since the representations $R_1$ and $R_1^*$
    are equivalent, it follows that
     $(\Sym^\bullet(R_1) \otimes \C[W])^{\SL_2(\C)} \cong \C[R_1 \oplus W]^{\SL_2(\C)}$.
    We will also refer to this latter ring as the ring of covariants of $W$.
      Classical invariant theorists found generators for the rings of covariants
of a number of small representations $W$ of $\SL_2(\C)$.

We work with the basis $\{x,y\}$ of
$R_1^*$ which is dual to the basis $\{Y,X\}$ of $R_1$.
Then $\sigma(x)=y-x$ and $\sigma(y)=y$.
We also use $\{\binomial d i  a_i \mid i=0,1,\dots,n\}$ as a basis for $R_d^*$ where
$\{a_0,a_1,\dots,a_d\}$ is dual to $\{X^d,-X^{d-1}Y,\dots,X^{d-i}(-Y)^i,\dots,Y^d\}$.
We choose these bases in order that the homogeneous $d$-form
$$f = \sum_{i=0}^n \binomial d i  a_i x^{d-i} y^i \in (R_1 \oplus R_d)^*$$
is invariant under the action of $\SL_2(\C)$.
Putting $f = \sum_{i=0}^n \binomial d i  a_1 x^{d-i} y^i$ into the above formula for the action
we find that
$\sigma = \sigma_2(\C)$
acts on $R_d^*$ via $\sigma(a_r) = \sum_{j=0}^r \binomial r j a_j$ for $r=0,1,\dots,d$.
 From this it is easy to see that $\sigma$ acts irreducibly
on $R_d^*$.  It can be shown that $R_d$ and $R_d^*$ are equivalent as representations of $\SL_2(\C)$.
In fact, if $W$ is any irreducible representation of $\SL_2(\C)$ of dimension $d+1$
then $W$ is equivalent to $R_d$.
  Since $\sigma$ acts irreducibly on $R_d$, it follows that
the action of $\sigma$  %
on $R_d$ is given (with respect to a Jordan basis) by $\sigma_{d+1}(\C)$.

  Given two forms $g \in R_m = \Sym^m(R_1)$ and $h \in R_n = \Sym^n(R_1)$, their
  {\em $r^\text{th}$ transvectant} is defined by
  $$(g,h)^r := \frac{(m-r)!(n-r)!}{m!n!}\sum_{i=0}^r (-1)^i \binomial{r}{i} \frac{\partial^r g}{\partial X^{r-i}\partial Y^i}  \frac{\partial^r h}{\partial X^{i}\partial Y^{r-i}}$$
  for $r=0,1,\dots,\min\{m,n\}$.  It has degree (traditionally called order) $m+n-2r$ in $X,Y$, i.e.,  $(g,h)^r \in R_{m+n-2r}$.

The Clebsch-Gordan formula  (\cite[\S3.3]{Pr}) asserts that   $$R_m \otimes R_n \cong \bigoplus_{r=0}^{\min\{m,n\}} R_{m+n-2r}\ .$$
If $g \in R_m$ and $h \in R_n$ then the projection of $R_m \otimes R_n$ onto its summand $R_{m+n-2r}$ carries
$g \otimes h$ onto $(g,h)^r$.

  \begin{example}\label{eg1}
         The ring of covariants of $W=R_2\oplus R_3$ was computed by classical invariant theorists.
     In this example, we concentrate on
     $\C[R_1 \oplus R_2 \oplus R_3]_{(*,1,1)}^{\SL_2(\C)}$.
      In \cite[\S 140]{G-Y} it is shown that the ring $\C[R_1 \oplus R_2 \oplus R_3]^{\SL_2(\C)}$  is generated by 15 generators.
      Following the notation there, we use $\phi$ to denote an element of $R_2$ (the quadratic) and $f$ to denote
      an element of $R_3$ (the cubic).  Examining the multi-degrees
      of the 15 generators we find that 4 of them are relevant to understanding $\C[R_1 \oplus R_2 \oplus R_3]_{(*,1,1)}^{\SL_2(\C)}$.
      These are $(\phi,f)^1$ of degree $(3,1,1)$,  $(\phi,f)^2$ of degree $(1,1,1)$ and the two forms $\phi$ of degree $(2,1,0)$ and
      $f$ of degree $(3,0,1)$.    Thus $\C[R_1 \oplus R_2 \oplus R_3]_{(*,1,1)}^{\SL_2(\C)}$ is 3 dimensional with basis
      $\{(\phi,f)^1,(\phi,f)^2,\phi f\}$.
  \end{example}

\section{Roberts' Isomorphism}
  Given a covariant $g = g_0 Y^d + g_1 X Y^{d-1} + \dots + g_d X^d$, the coefficient $g_0$ of $Y^d$ is called the
  {\it source} of $g$ and is also known as a {\it semi-invariant}.

Let $W$ be any representation of $\SL_2(\C)$.  Roberts' isomorphism (see \cite{roberts}) is the isomorphism which
associates to a covariant its source:
$$\psi :\C[R_1\oplus W]^{\SL_2(\C)} \to \C[W]^H$$ given by $\psi(f(\cdot,\cdot)) = f(Y,\cdot)$ where $R_1$
has basis $
\{X,Y\}$
and $H$ is the subgroup
$$H := \SL_2(\C)_{Y} = \{\alpha \in \SL_2(\C) \mid \alpha\cdot Y = Y\}
= \{\begin{pmatrix}
1 & 0\\
z & 1
\end{pmatrix} \mid z \in \C\}$$ which fixes $Y$.
For a modern discussion and proof of Roberts' isomorphism see \cite{BK}  or \cite[ \S15.1.3 Theorem~1]{Pr}.

  Clearly $H$ contains a copy $K$ of the integers $\Z$ as a dense (in the Zariski topology) subgroup:
$K :=  \{\begin{pmatrix}
1 & 0\\
m & 1
\end{pmatrix} \mid m \in \Z\}$.  This is just the subgroup of $\GL_2(\C)$ generated by $\sigma_2(\C)$.
Since $K$ is dense in $H$, we have  $\C[W]^H = \C[W]^{K}$ and thus
$\C[R_1\oplus W]^{\SL_2(\C)} \cong \C[W]^K$.

  Since the action of $K$ on $W$ is defined over $\Z \subset \Q$ we have
$$W \cong W(\Q) \otimes_{\Q} \C \cong W(\Z) \otimes_{\Z} \C$$
 where $W(\Z)$ denotes the integer points of $W$ and
$W(\Q)\cong W(\Z) \otimes_{\Z} \Q$ denotes the $\Q$ points of $W$.

 Thus
 $$
 \C[R_1 \oplus W]^{\SL_2(\C)} \cong \C[W]^H = \C[W]^K\\
 $$

 As above, $\C[W]^K$ is the kernel of the linear operator $$\Delta: \C[W] \to \C[W]$$ and thus
 $$\C[W]^K \cong \Q[W(\Q)]^K \otimes_{\Q} \C \cong  (\Z[W(\Z)]^K \otimes_{\Z} \Q) \otimes_{\Q} \C \cong \Z[W(\Z)]^K \otimes_{\Z} \C\ .$$

Clearly $R_d(\Q)$ is isomorphic to the $\Z$-module  $M_{d+1}$ considered above.  Thus
we may identify $M_{d+1}$ with the $\Q$-points of $R_d$ and $M_{d+1}(\Z)$ with the $\Z$ points of $R_d$.
Writing $W \cong \oplus_{i=1}^k R_{d_i}$ we have
 \begin{align*}
 \C[R_1 \oplus(\oplus_{i=1}^k R_{d_i})]^{\SL_2(\C)} &\cong \Q[\oplus_{i=1}^k M_{d_i+1}]^\Z \otimes_{\Q} \C \\
  &\cong \Z[\oplus_{i=1}^k M_{d_i+1}(\Z)]^\Z \otimes_{\Z} \C\ .
 \end{align*}
 (Here we are writing $\Z$ for the group $K$.)
Furthermore $$\rho: \Z[\oplus_{i=1}^k M_{d_i+1}(\Z)]^\Z  \to \F_p[\oplus_{i=1}^k V_{d_i+1}]^{C_p}$$
  where the kernel of $\rho$ is the principal ideal generated by $p$.

\section{Periodicity}
  Let $V$ be a $C_p$-module and write $V$ as a direct sum of indecomposable $C_p$-modules:
 $V = V_{n_1} \oplus V_{n_2} \oplus \dots \oplus V_{n_r}$.  This decomposition induces a $\N^r$ grading on $\field_p[V]$
 which is preserved by the action of $C_p$.   As above we choose a generator $z_i \in V^*_{n_i}$ for each $i=1,2,\dots,r$
 and put $N_i := \norm(z_i)$.  We further define $\field_p[V]^\sharp$ to be the ideal of $\field_p[V]$ generated by $N_1,N_2,\dots,N_r$.

 The following theorem (see for example \cite[\S 2]{ndec}) is very useful.
 \begin{theorem}[Periodicity]
   The ideal $\field_p[V]^\sharp$ is a summand of the $C_p$-module $\field_p[V]$.  Denoting its complement by $\field_p[V]^\flat$
   we have the decomposition $\field_p[V] = \field_p[V]^\sharp \oplus \field_p[V]^\flat$ as a $C_p$-modules.  Taking the multi-grading
   into account we have
   $$\field_p[V]_{(d_1,d_2,\dots,d_r)} = \field_p[V]_{(d_1,d_2,\dots,d_r)}^\sharp \oplus \field_p[V]_{(d_1,d_2,\dots,d_r)}^\flat\ .$$
   Moreover if there exists $i$ such that $d_i \geq p-n_i+1$ then $\field_p[V]_{(d_1,d_2,\dots,d_r)}^\flat$ is a free $C_p$-module.
 \end{theorem}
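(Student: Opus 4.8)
The plan is to pin down $\field_p[V]^\flat$ explicitly as a span of monomials and prove the splitting by a Gr\"obner--basis argument, then reduce the freeness statement to the case of a single indecomposable summand, and finally dispatch that case by an induction whose only non-formal ingredient is a base case in small degree.

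First, in the chosen graded reverse lexicographic order one has $\LM(N_i)=z_i^{\,p}$: each of the $p$ factors $\sigma^j(z_i)=\sum_{k\ge0}\binom{j}{k}\Delta^k z_i$ of $N_i$ has leading monomial $z_i$ and leading coefficient $1$, so $N_i=z_i^{\,p}+(\text{lower terms})$. The monomials $z_1^{\,p},\dots,z_r^{\,p}$ involve pairwise disjoint sets of variables, so Buchberger's coprimality criterion shows that $\{N_1,\dots,N_r\}$ is a Gr\"obner basis of the ideal $\field_p[V]^\sharp$, with initial ideal $(z_1^{\,p},\dots,z_r^{\,p})$. Hence the monomials $m$ with $\deg_{z_i}(m)\le p-1$ for all $i$ form a $\field_p$-basis of a vector-space complement of $\field_p[V]^\sharp$; let $\field_p[V]^\flat$ denote their span. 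As each $N_i$ is invariant, $\field_p[V]^\sharp$ is a $C_p$-submodule, and $\field_p[V]^\flat$ is one as well: for any monomial $m$ and any $i$, every monomial occurring in $\sigma(m)$ has $z_i$-degree at most $\deg_{z_i}(m)$, since $\sigma(z_i)=z_i+\Delta z_i$ is linear in $z_i$ while $\sigma$ maps every variable other than $z_i$ to a combination of variables other than $z_i$ (indeed $\sigma(\Delta^\ell z_i)=\Delta^\ell z_i+\Delta^{\ell+1}z_i$ for $\ell\ge1$, and $\sigma$ respects the variable sets of the remaining summands). Both summands are multihomogeneous, which gives the first three assertions.

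For the freeness, observe that $\field_p[V]=\bigotimes_i\field_p[V_{n_i}]$ over $\field_p$ with $C_p$ acting diagonally, compatibly with the multigrading, and that the condition defining $\flat$ is the conjunction of the single-summand conditions; thus $\field_p[V]^\flat_{(d_1,\dots,d_r)}\cong\bigotimes_i\field_p[V_{n_i}]^\flat_{d_i}$ as $C_p$-modules. By the tensor identity $\field_p C_p\otimes_{\field_p}M$ is free of rank $\dim_{\field_p}M$ for every $M$, so a tensor product of $C_p$-modules is free as soon as one factor is; granting the single-summand statement, the hypothesis $d_i\ge p-n_i+1$ for some $i$ then makes one tensor factor, hence all of $\field_p[V]^\flat_{(d_1,\dots,d_r)}$, free. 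It therefore suffices to prove that $\field_p[V_n]^\flat_d$ is free whenever $d\ge p-n+1$. Write $\field_p[V_n]=\field_p[u_0,\dots,u_{n-1}]$ with $\sigma u_j=u_j+u_{j+1}$ ($u_n:=0$); then $u_{n-1}$ is invariant and a non-zero-divisor, with $\field_p[V_n]/(u_{n-1})\cong\field_p[V_{n-1}]$ as $C_p$-algebras, so multiplication by $u_{n-1}$ produces in each degree a short exact sequence of $C_p$-modules
\[
0\longrightarrow\field_p[V_n]^\flat_{d-1}\xrightarrow{\,\cdot\,u_{n-1}\,}\field_p[V_n]^\flat_{d}\longrightarrow\field_p[V_{n-1}]^\flat_{d}\longrightarrow0 .
\]
Since $\field_p C_p$-projectives are free and a short exact sequence with projective cokernel splits, an extension of free modules is free; so a double induction (outer on $n$, inner on $d$, the step from $d-1$ to $d$ being read off the sequence) reduces the whole claim to the single remaining degree $d=p-n+1$ in each dimension, that is, to showing that $\field_p[V_n]^\flat_{p-n+1}=\field_p[V_n]_{p-n+1}=\Sym^{p-n+1}(V_n)$ (the cut-off being vacuous in this degree) is a free $C_p$-module for every $n$ with $2\le n\le p$.

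This base case is the one place the argument is not formal, and I expect it to be the main obstacle. It is equivalent to the assertion that the transfer $\Delta^{p-1}$ carries $\Sym^{p-n+1}(V_n)$ onto the whole of its socle, i.e.\ that every invariant of degree $p-n+1$ is a transfer; the cases $n=2$ and $n=p$ are immediate, but in general the precise value $p-n+1$ of the threshold rules out a soft argument, and I would establish it by describing the socle of $\Sym^{p-n+1}(V_n)$ explicitly and writing down transfers that surject onto it (alternatively, it can be read off from the known decompositions of $\Sym^\bullet(V_n)$ into indecomposable $C_p$-modules, equivalently from the reduction-mod-$p$ behaviour of the classical $\SL_2(\C)$-decomposition of $\Sym^\bullet(R_{n-1})$). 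Granting this input, the Gr\"obner--basis splitting, the tensor reduction, and the inductive bookkeeping are all routine.
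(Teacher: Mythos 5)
The paper itself gives no proof of this theorem --- it is quoted from Shank--Wehlau \cite[\S 2]{ndec} --- so your proposal has to stand on its own, and most of it does. The splitting part is correct and is the standard argument: $\LM(N_i)=z_i^p$, the pairwise coprime leading terms make $\{N_1,\dots,N_r\}$ a Gr\"obner basis, and the span of the monomials with all $z_i$-degrees at most $p-1$ is $\sigma$-stable by the triangularity of the action. The tensor reduction (one free factor makes the whole tensor product free) and the multiplication-by-$u_{n-1}$ exact sequence, with its double induction, are also correct, and they validly reduce the ``Moreover'' clause to the single statement that $\Sym^{p-n+1}(V_n)$ is a free $\field_p C_p$-module for every $2\le n\le p$.

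That statement, however, is exactly where the content of the theorem sits, and you have not proved it; you only flag it and sketch two possible routes. The first route (``describe the socle of $\Sym^{p-n+1}(V_n)$ explicitly and write down transfers that surject onto it'') is not routine: surjectivity of $\Delta^{p-1}$ onto the fixed points is equivalent to the freeness you are trying to prove, and to verify it you must know that $\dim\bigl(\Sym^{p-n+1}(V_n)\bigr)^{C_p}=\tbinom{p}{n-1}/p$, i.e.\ you must control the number of indecomposable summands --- this is precisely the nontrivial computation (note, for instance, that it forces invariants such as $x^{p-n+1}$, a power of the linear invariant, to lie in the image of the transfer, which is not obvious). The second route (``read it off from the known decompositions of $\Sym^\bullet(V_n)$'') amounts to citing the Almkvist--Fossum/Hughes--Kemper results on symmetric powers of $V_n$, which is in effect what the paper does by citing \cite{ndec}; that is a legitimate reference, but then your argument is a reduction to the literature rather than a proof. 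So, as a self-contained proof the proposal has a genuine gap at its base case; as a reduction it is sound, and indeed it mirrors how the result is established in the sources the paper relies on.
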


 \begin{remark}
   More can be said.  In fact
   $$\field_p[V]_{(d_1,d_2,\dots,d_{i-1},d_i+p,d_{i+1},\dots,d_r)} \cong \field_p[V]_{(d_1,d_2,\dots,d_r)} \oplus k\, V_p$$
   for some positive integer $k$.  This explains why the the previous theorem is known by the name {\em periodicity}.
 \end{remark}

  The decomposition given by the periodicity theorem obviously yields a vector space decomposition of the
  multi-graded ring of invariants:
  $$\field_p[V]^{C_p}_{(d_1,d_2,\dots,d_r)} = (\field_p[V]^{C_p}_{(d_1,d_2,\dots,d_r)})^\sharp \oplus (\field_p[V]^{C_p}_{(d_1,d_2,\dots,d_r)})^\flat\ .$$
  Here $$(\field_p[V]^{C_p}_{(d_1,d_2,\dots,d_r)})^\sharp = (\field_p[V]_{(d_1,d_2,\dots,d_r)}^\sharp)\cap\field_p[V]^{C_p}$$ is the ideal
  of $\field_p[V]^{C_p}$ generated by  $N_1,N_2,\dots,N_r$ and $$(\field_p[V]^{C_p}_{(d_1,d_2,\dots,d_r)})^\flat = (\field_p[V]_{(d_1,d_2,\dots,d_r)}^\flat)\cap\field_p[V]^{C_p}\ .$$

\section{Outline of the proof}
We are now in position to outline the main steps of our proof.
     We want to show that the co-kernel of the reduction mod $p$ map $\rho:\Z[\oplus_{i=1}^r M_{n_i}(\Z)]^\Z \to \field_p[\oplus_{i=1}^r V_{n_i}]^{C_p}$ is spanned
     by products of transfers and the norms $N_1,N_2,\dots,N_r$.     We consider a fixed multi-degree $(d_1,d_2,\dots,d_n)$.
     Using the Periodicity Theorem we may reduce to the case where each $d_i < p$.  Then we may exploit the fact that for such values of $d_i$
     the homogeneous component $\field_p[\oplus_{i=1}^r V_{n_i}]_{(d_1,d_2,\dots,d_n)}$ is a summand of $\otimes_{i=1}^r \otimes^{d_i} V_{n_i}$.
     Thus we may consider the reduction mod $p$ map $\rho:  \otimes_{i=1}^r \otimes^{d_i} M_{n_i}(\Z) \to \otimes_{i=1}^r \otimes^{d_i} V_{n_i}$.
     For this map we will show that for any summand $V_k$ of $\otimes_{i=1}^r \otimes^{d_i} V_{n_i}$ with $k <p$, there exists a corresponding summand
     $M_k$ of  $\otimes_{i=1}^r \otimes^{d_i} M_{n_i}$ with $\rho(M_k(\Z)) = V_k$.  In particular $V_k^{C_p}$ lies in the image of $\rho$.
     By induction we reduce to $\rho : M_m(\Z) \otimes M_n(\Z) \to V_m \otimes V_n$ where $m,n \leq p$.   By carefully examining explicit decompositions of
     $M_m \otimes M_n$ and $V_m \otimes V_n$ we are able to show that any summand $V_k$ of $V_m \otimes V_n$ is contained $\rho(M_m(\Z) \otimes M_n(\Z))$.

   The following example is instructive as regards both dependence on the prime $p$ and and our solution to the last step in the above outline of the proof.

\begin{example}\label{eg2}
     We consider the $\Z$-module $M_3 \otimes M_4$.  We realize $M_3$ as $\Q[s]/(s^3)$ and $M_4$ as $\Q[t]/(t^4)$
     with $\Delta(s^i) = s^{i+1}$ and $\Delta(t^j) = t^{j+1}$.
          By (\ref{C-G for M}) we have $M_3 \otimes M_4 \cong M_2 \oplus M_4 \oplus M_6$.  
          We can also see this decomposition explicitly as follows.
    Let $\alpha_0 := 1$, $\alpha_1 := 3s-2t$ and $\alpha_2 := 3s^2-2st + t^2+ 2t^3$.
    Then
    \begin{align*}
    \alpha_0 &:= 1\\
      \Delta(\alpha_0) &=  s +  t + s t\\
      \Delta^2(\alpha_0) &=  s^2 + 2s t + t^2 + 2 s^2t + 2st^2 + s^2t^2\\
      \Delta^3(\alpha_0) &= 3s^2t + 3s t^2 + t^3 + 6 s^2t^2 + 3st^3 + 3s^2t^3\\
      \Delta^4(\alpha_0) &= 6s^2t^2 + 4s t^3 + 12s^2t^3\\
      \Delta^5(\alpha_0) &= 10s^2 t^3\\
      \Delta^6(\alpha_0) &=  0\\
   \alpha_1 &:= 3s-2t\\
      \Delta(\alpha_1) &= 3s^2 +st -2t^2 + 3s^2 t- 2st^2\\
      \Delta^2(\alpha_1) &= 4s^2t - s t^2 + 2t^3 +2 s^2t^2 -4st^3 + 2 s^2t^3\\
      \Delta^3(\alpha_1) &= 3s^2t^2 - 3s t^3 - 3s^2t^3\\
   \Delta^4(\alpha_1) &= 0\\
   \alpha_2 &:= 3s^2-2st + t^2+2t^3\\
      \Delta(\alpha_2) &= s^2t - st^2 + t^3- 2s^2t^2 + 3st^3\\
      \Delta^2(\alpha_2) &=0
      \end{align*}

Thus $\Span_{\Q}\{\alpha_2,\Delta(\alpha_2)\} \cong M_2$,
$\Span_{\Q}\{\Delta^j(\alpha_1) \mid 0 \leq i \leq 3\} \cong M_4$ and
$\Span_{\Q}\{\Delta^j(\alpha_0) \mid 0 \leq i \leq 5\} \cong M_6$.
Hence we have an explicit decomposition: $M_3 \otimes M_4 \cong M_2 \oplus M_4 \oplus M_6$.

We put $\omega_0 := s^2 t^3$, $\omega_1 := s^2t^2 - s t^3 - s^2t^3$
and  $\omega_2 := %\Delta^1(\alpha_2) =
                              s^2t - st^2 + t^3- 2s^2t^2 + 3st^3$.
Thus $\omega_0 := \Delta^5(\alpha_0/10)$, $\omega_1 := \Delta^3(\alpha_1/3)$
and  $\omega_2 := \Delta^1(\alpha_2)$.
Therefore $\ell(\omega_0)=6$, $\ell(\omega_1)=4$ and $\ell(\omega_2)=2$.

Take $p \geq 5$.
Reduction modulo $p$ gives the map $\rho : M_3(\Z) \otimes M_4(\Z) \to V_3 \otimes V_4$.
%Write $x_i := \rho(X_i)$ for $i=0,1,2$ and $y_i = \rho(Y_i)$ for $i=0,1,2,3$.
%We abuse notation and write $V_3 \otimes V_4 = \field_p[s]/(s^3) \otimes \field_p[t]/(t^4)$. 
From Proposition~\ref{decomp} we have
   $$V_3 \otimes V_4 \cong
   \begin{cases}
       V_2 \oplus V_4 \oplus V_6,& \text{if } p \geq 7;\\
       V_2 \oplus 2\,V_5,& \text{if } p=5.
    \end{cases}
    $$
Again we may see this decomposition explicitly by considering the action of $C_p$ on $V_3 \otimes V_4$ as follows.

Put $\b{\omega}_i := \rho(\omega_i)$ and $\b{\alpha}_i := \rho(\alpha_i)$ for $i=0,1,2$.

First suppose that $p \geq 7$.
Take $\mu_0,\mu_1\in \Z$ with
 $10\mu_0 \equiv 1 \pmod p$ and $3\mu_1 \equiv 1 \pmod p$.
Then from the above computations we have
$\Span_{\field_p}\{\b{\alpha}_2,\Delta(\b{\alpha}_2)\} \cong V_2$,
$\Span_{\field_p}\{\Delta^j(\mu_1\b{\alpha}_1) \mid 0 \leq i \leq 3\} \cong V_4$ and
$\Span_{\field_p}\{\Delta^j(\mu_0 \b{\alpha}_0) \mid 0 \leq i \leq 5\} \cong V_6$.
In particular,
 $\Delta^{5}(\mu_0 \b{\alpha}_0)=\b{\omega}_0$, $\Delta^{3}(\mu_1\b{\alpha}_1)=\b{\omega}_i$ and
 $\Delta(\b{\alpha}_2)=\b{\omega}_2$ and therefore
 $\ell(\b{\omega}_2)=2=\ell(\omega_2)$, $\ell(\b{\omega}_1)=4=\ell(\omega_1)$ and
 $\ell(\b{\omega}_0)=6=\ell(\omega_0)$.

Now we consider the case $p=5$.  Then $\Delta^5(\b{\alpha}_0)=0$ and from this we can show that
$\b{\omega}_0 \notin \Delta^5(V_3 \otimes V_4)$.  Hence $\ell(\b{\omega}_0) \leq 5$.  Here we may define
$\b{\beta}_0 := \rho(s)$,
 $\b{\beta}_1 :=  3\b{\alpha}_0$ and
$\b{\beta}_2 := \rho(\alpha)_2)$.
Then
$\Span_{\field_5}\{\b{\beta}_2,\Delta(\b{\beta}_2)\} \cong V_2$,
$\Span_{\field_5}\{\Delta^j(\mu_1\b{\beta}_1) \mid 0 \leq i \leq 4\} \cong V_5$ and
$\Span_{\field_5}\{\Delta^j(\mu_0 \b{\beta}_0) \mid 0 \leq i \leq 4\} \cong V_5$.
Then $\Delta^4(\b{\beta_0}) = \b{\omega}_0$,  $\Delta^4(\b{\beta}_1) = \b{\omega}_1$ and
 $\Delta(\b{\beta}_2) = \b{\omega}_2$.  Thus $\ell(\b{\omega}_0)=\ell(\b{\omega}_1)=5$ and $\ell(\b{\omega}_2)=2$.

 Comparing this with Example~\ref{eg1} we see that (up to choice of bases)
       $\psi( (\phi,f)^1 ) =  \omega_1$,  $\psi( (\phi,f)^2 ) =  \omega_2$ and  $\psi( \phi f ) =  \omega_0$.
\end{example}

\section{Representation Rings}
\subsection{Complex representations of $\SL_2(\C)$}
Let $\Rsl$ denote the representation ring of complex representations of $\SL_2(\C)$.
Then $$\Rsl \cong \Z[\var{R}_1] \cong \oplus_{d=0}^{\infty} \Z \var{R}_d\ .$$
Here $\var{R}_d$ is a formal variable corresponding to the representation
$R_d$ for all $d \geq 1$ and $\var{R}_0 = 1 \in \Rsl$ corresponds to the 1 dimensional
trivial representation.
The multiplication in $\Rsl$ is given by the Clebsch-Gordan rule: (see \cite[\S3.3]{Pr})
$$\var{R}_m \cdot \var{R}_n = \sum_{k=0}^{\min\{m,n\}} \var{R}_{|n-m|+2k}.$$
This formula can be used to inductively derive a formula expressing
$\var{R}_d$ as a polynomial in $\Z[\var{R}_1]$.
Almkvist  \cite[Theorem~1.4(a)]{some formulas}  showed that in fact
$\var{R}_d = \Ch_{d+1}(\var{R}_1/2)$  where
$\Ch_n(x)$ is the $n^\text{th}$ Chebyshev polynomial of the second kind.

\subsection{Certain rational representations of $\Z$}
Let $\Rz$ denote the subring of the representation ring of $\Z$ given by
$$\Rz := \Z[\var{M}_2] \cong \oplus_{d=1}^{\infty} \Z \var{M}_d\ .$$
Here $\var{M}_d$ is a formal variable corresponding to the representation
$M_d$ for all $d \geq 2$ and $\var{M}_1 = 1 \in \Rz$ corresponds to the 1 dimensional
trivial representation.
  The multiplication in
$\Rz$ is given by a Clebsch-Gordan type formula:
\begin{equation}  \label{C-G for M}
\var{M}_m \cdot \var{M}_n =
     \displaystyle \bigoplus_{k=1}^{\min\{m,n\}} \var{M}_{|n-m|+2k-1}\ .
\end{equation}

  This result is an immediate consequence of the Clebsch-Gordan formula for $R_{m-1} \otimes R_{n-1}$,
 after restricting from $\SL_2(\C)$ to the subgroup $K \cong \Z$ and using Roberts' isomorphism as above.
Alternatively a proof of this result follows from the Jordan Form of the Kronecker (or tensor) product of
two matrices in Jordan Form.  Such a decomposition was given independently by \name{Aiken}(\cite{A}) and
\name{Roth}(\cite{roth}) in 1934.  The proof by \name{Aiken} contains an error (the same error occurs in the treatment by
\name{Littlewood}(\cite{L}) of this problem).  The proof by \name{Roth} has been criticized for not providing
sufficient details for the so-called ``hard case'' when both matrices are invertible.  This is precisely the case
which may be settled by exploiting Roberts' isomorphism.
\name{Marcus} and \name{Robinson}(\cite{MR}) gave a complete proof extending the ideas of \name{Roth}.
In the words of \name{Bruldi} (\cite{Br}), ``the difficult case $\ldots$ constitutes the most substantial part of [Marcus and Robinson's]
proof''.  \name{Brualdi}(\cite{Br}) gave a proof based on \name{Aiken}'s method.  \name{Brualdi}'s article also includes a good discussion of the history
of this problem.     The approach via Roberts' isomorphism appears to have been overlooked by people studying this problem.

Again this formula can be used to inductively derive a formula expressing
$\var{M}_d$ as a polynomial in $\Z[\var{M}_2]$.  Once again the answer is given by
a Chebyshev polynomial of the second kind:
$\var{M}_d  = \Ch_d(\var{M}_2/2)$.

\subsection{Characteristic $p$ representations of $C_p$}
Let $\Rcp$ denote the representation ring of $C_p$ over the field $\Fp$.

The multiplication here is determined by
$$\var{V}_2 \otimes \var{V}_n \cong
   \begin{cases}
            \var{V}_2, & \text{ if } n=1;\\
            \var{V}_{n-1}\oplus \var{V}_{n+1}, & \text{ if } 2\leq n \leq p-1;\\
            2\,\var{V}_p, & \text{ if } n=p.
   \end{cases}
$$
Here $\var{V}_d$ is a formal variable corresponding to the representation
$V_d$ for all $2 \leq d \leq p$ and $\var{V}_1 = 1 \in \Rcp$ corresponds to the 1 dimensional
trivial representation.
For an especially simple proof of this formula see the proof of \cite[Lemma~2.2]{HK}.

From this it follows that $\var{V}_d  = \Ch_d(\var{V}_2/2)$ for $d \leq p$, a fact
also shown by Almkvist \cite[Theorem~5.10(b)]{reciprocity theorems}.
It is convenient to define $\var{V}_d  := \Ch_d(\var{V}_2/2) \in \Rcp$ for $d >p$.

The above formula implies  that
$$\Rcp \cong \Z[\var{V}_2] \cong \Z[T]/q(T) \cong \oplus_{d=1}^{p} \Z \var{V}_d$$
where $q$ is a certain polynomial of degree $p$.  For details see Almkvist's paper \cite{some formulas}.
 The polynomial $q$ is determined by
the fact that $\var{V}_{p+1} - 2 \var{V}_p + \var{V}_{p-1}=0$.
Thus $q(T) =  \Ch_{p+1}(T/2) - 2 \Ch_{p}(T/2) +  \Ch_{p-1}(T/2)$.

 Reduction modulo $p$ carries the lattice $M_d(\Z)$ to the
  representation $V_d$ of $C_{p^r}$ where $p^{r-1} < d \leq p^r$.
  In particular, reduction modulo $p$ carries $M_d(\Z)$ to the representation $V_d$
  of $C_p$ for all $d \leq p$.
     Thus the map $\rho$, defined above, induces a map
  $\phi : \Rz \to \Rcp$ given by $\phi(\var{M}_2)=\var{V}_2$.  Also
  $\phi(\var{M}_d) = \var{V}_d$ for all $d=1,2,\dots,p$.
  In fact $\phi(\var{M}_d) = \var{V}_d$ for all $d \geq 1$ since
  $\var{M}_d = \Ch_d(\var{M}_2/2)$ for all $d \geq 1$.

With this convention the multiplication rule may be expressed in a form similar to the Clebsch-Gordan formula:
\begin{equation}\label{C-G for V}
   \var{V}_m \cdot \var{V}_n = \sum_{k=1}^{\min\{m,n\}} \var{V}_{|n-m|+2k-1}.
\end{equation}
 It is clear that the map $\phi$ is a surjection whose kernel is the principal ideal generated by
   $q(\var{M}_2)$.

We wish to derive a more enlightening and explicit formula for the product $\var{V}_m \cdot \var{V}_n$
for the cases corresponding to actual (indecomposable) representations, i.e., when $1 \leq m,n \leq p$.
More precisely, we want to express such a product in terms of  the elements $\var{V}_d$ with $d \leq p$.

We prove the following.
\begin{proposition}\label{decomp}
  Let $1 \leq m \leq n \leq p$.  Then
$$\var{V}_m \cdot \var{V}_n =
   \begin{cases}
         \quad\,\,\, \displaystyle \sum_{i=1}^m \var{V}_{m+n-2i+1}
                 =\displaystyle \sum_{s=1}^m \var{V}_{n-m+2s-1}, & \hskip -1.5em\text{if }  m+n \leq p+1;\\
        \displaystyle\sum_{i=m+n-p+1}^{m} \var{V}_{m+n-2i+1} + ( m+n-p)\var{V}_p\\
            \qquad =     \displaystyle\sum_{s=1}^{p-n} \var{V}_{n-m+2s-1} + ( m+n-p)\var{V}_p, & \text{ if }  m+n \geq p.
                \end{cases}
$$
\end{proposition}
   \begin{proof}
     If $m+n \leq p+1$ then the result follows from the Clebsch-Gordan type formula (\ref{C-G for V}) above.

     Thus we suppose that $m+n \geq p+2$.  For this case, the proof is by induction on $m$.
     If $m=2$ then we must have $n=p$ since $m+n \geq p+2$.  Hence $V_n=V_p$ is projective and therefore so is
     $V_m \otimes V_n$.
     This implies $V_m \otimes V_p \cong m\,V_p$.  Thus the result is true for $m=2$.

     Suppose then that the result holds for $m=2,3,\dots,r-1$ and we will prove it true for $m=r$.
     Again, using projectivity, the result is clear if $m=p$ so we may suppose that $m=r \leq p-1$.
     Consider $\var{V}_2 \times \var{V}_{r-1} \times \var{V}_n$.
     \begin{align*}
            (\var{V}_2  \times \var{V}_{r-1}) \times \var{V}_n  & = (\var{V}_r + \var{V}_{r-2})\times \var{V}_n\\
           & = (\var{V}_r \times \var{V}_n)  + (\var{V}_{r-2} \times \var{V}_n) \\
            & = (\var{V}_r \times \var{V}_n) + (\sum_{s=1}^{p-n} \var{V}_{n-r+2s+1}) + ( r+n-p-2)\,\var{V}_p\\
               & \qquad\qquad             \text{ since }r+n-2 \geq p.
     \end{align*}
     Conversely
     \begin{align*}
          \var{V}_2  \times (\var{V}_{r-1} &\times \var{V}_n)   = \var{V}_2 \times (\sum_{s=1}^{p-n} \var{V}_{n-r+2s} + (r+n-p-1)\,\var{V}_p)\\
          &\qquad\text{ since } r-1\geq p-n+1 > p-n\\
           & =   \sum_{s=1}^{p-n} \var{V}_{n-r+2s+1} + \sum_{s=1}^{p-n} \var{V}_{n-r+2s-1} + 2(r+n-p-1)\,\var{V}_p\ .
     \end{align*}
     Therefore $\var{V}_r \times \var{V}_n =   \sum_{s=1}^{p-n} \var{V}_{n-r+2s-1}  + (r+n-p)\, \var{V}_p$.
   \end{proof}

   \begin{remark}
     Note that if $m \leq n \leq p$ then $\dim (V_m \otimes V_n)^{C_p} = m$ for both of the cases
     $m + n \leq p$ and $m + n > p$.
   \end{remark}

%%%%%%%%%%%%%%%%%%%%%%%%%%%%%%%%%%%%%%%%%%%%%%%%%%%%%%%%%%%
\section{Explicit Decompositions}
The formulae in the previous section describe the decomposition of tensor products of representations abstractly.
We will require more explicit decompositions including not just a list of the representations occurring in a product
but also some information about how these subrepresentations lie in the tensor product.  We will use the formulae from the
 representation rings to help in determining this extra information.

\subsection{Decomposing $M_m \otimes M_n$}
We begin by considering the product $M_m \otimes M_n$.
 Suppose $m \leq n$.  From the representation ring formula we know that
\begin{equation}\label{explicit CG for M}
  M_m \otimes M_n \cong \displaystyle \bigoplus_{s=1}^m M_{n-m+2s-1}\ .
\end{equation}

  For our purposes we need a explicit description of the submodules occurring in this decomposition.
We write
$$M_m \otimes M_n \cong \frac{\Q[s]}{(s^m)} \otimes \frac{\Q[t]}{(t^n)}
                                     \cong \frac{\Q[s,t]}{(s^m, t^n)}$$
which we identify with $\Span_{\Q}\{s^it^j \mid 0 \leq i < m, 0 \leq j < n\}$.
$\Z$ acts on $M_m \otimes M_n$ via $\sigma=(1+s)(1+t) = 1 + s + t + st$
and $\Delta = \sigma-1 = s + t + st$.

 We filter $M_m \otimes M_n$ by total degree writing $\FF_r(M_m \otimes M_n) := \{ h \in  \Q[s,t]/(s^m, t^n) \mid \deg(h) \geq r\}$.
For $h \in M_m \otimes M_n$, we write $\gr(h)=h_d$ where $h=h_d + h_{d+1} + \dots + h_{m+n-2}$
with $h_d \neq 0$ and $h_r \in (M_m \otimes M_n)_r$.
For $0 \neq h \in M_m \otimes M_n$, we define $\Deg(h) = \deg(\gr(h))$.
 We consider the Hilbert function of $M_m \otimes M_n$ defined by
 $$
   H(M_m \otimes M_n,j) := \dim (M_m \otimes M_n)_j
 $$

   The Hilbert series of $M_r$ is the polynomial $1 + \lambda + \lambda^2 + \dots + \lambda^{r-1} = \frac{1-\lambda^r}{1-\lambda}$.
   Thus the Hilbert series of $M_m \otimes M_n$ is given by  $\frac{1-\lambda^m}{1-\lambda} \frac{1-\lambda^n}{1-\lambda}$.
   Thus the Hilbert function of $M_m \otimes M_n$ is given by
  $$H(M_m \otimes M_n,j) =
 \begin{cases}
            j+1,          & \text{if } 0 \leq j \leq m-1;\\
            m,           & \text{if } m-1\leq j \leq n-1;\\
            m+n-j-1, & \text{if } n-1 \leq j \leq n+m-2;\\
            0,             &\text{otherwise}.
   \end{cases}$$

\begin{proposition}
  Let $1 \leq m \leq n$.  For $r=0,1,2,\dots,m-1$ there exists an element $\omega_r \in \MZ$ such that
  $\Delta(\omega_r)=0$ with $\Deg(\omega_r) =m+n-r-2$ and $\gr(\omega_r) = \sum_{i=0}^{r}  (-1)^{i+1}s^{m-1-i} t^{n-r+i-1}$.
\end{proposition}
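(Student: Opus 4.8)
The plan is to pass to the associated-graded picture. Write $A := M_m \otimes M_n \cong \Q[s,t]/(s^m,t^n)$ with its total-degree grading, and split the operator $\Delta = s+t+st$ as $\Delta = \delta + \mu$, where $\delta$ is multiplication by $s+t$ (homogeneous of degree $1$) and $\mu$ is multiplication by $st$ (homogeneous of degree $2$). Then $\delta$ is the leading part of $\Delta$: if $0 \ne h \in A$ has $\Deg(h) = d$, the degree-$(d+1)$ homogeneous component of $\Delta(h)$ is exactly $\delta(\gr(h))$. The strategy has two steps: (i) check that the prescribed leading form $L_r := \sum_{i=0}^{r}(-1)^{i+1}s^{m-1-i}t^{n-r+i-1}$ is a nonzero element of $A_{m+n-r-2}$ annihilated by $\delta$; (ii) lift $L_r$ along the filtration to an element of $\MZ$ annihilated by $\Delta$, whose leading form is still $L_r$.

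For step (i): because $0 \le r \le m-1 \le n-1$, the exponents occurring in $L_r$ all lie in $[0,m-1]\times[0,n-1]$, so $L_r$ is a genuine nonzero $\pm1$-combination of distinct basis monomials, homogeneous of degree $m+n-r-2$. Writing $m_i := s^{m-1-i}t^{n-r+i-1}$ one has $\delta(m_i) = s^{m-i}t^{n-r+i-1} + s^{m-1-i}t^{n-r+i}$, and the key point is that the second monomial here equals the first monomial of $\delta(m_{i+1})$. Since $m_i$ and $m_{i+1}$ occur in $L_r$ with opposite signs, $\delta(L_r) = \sum_i(-1)^{i+1}\delta(m_i)$ telescopes, and the only surviving terms are the two boundary monomials $s^{m}t^{n-r-1}$ and $s^{m-1-r}t^{n}$, both of which vanish in $A$. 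Hence $\delta(L_r)=0$.

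For step (ii) I use a surjectivity fact for the integral lattice $A_\Z := \Z[s,t]/(s^m,t^n)$, which is exactly $\MZ$: for $j \ge m-1$ the map $\delta\colon (A_\Z)_j \to (A_\Z)_{j+1}$ is onto. Indeed $\mathrm{coker}(\delta)$ in degree $j+1$ is the degree-$(j+1)$ part of $A_\Z/(s+t)A_\Z = \Z[s,t]/(s^m,t^n,s+t) \cong \Z[s]/(s^m)$, where we used $m \le n$ so that $s^n \in (s^m)$; this quotient is zero in all degrees $\ge m$. Now, since $\Deg(L_r) = m+n-r-2 \ge m-1$, set $c_{m+n-r-2} := L_r$ and recursively solve $\delta(c_j) = -\mu(c_{j-1})$ for $c_j \in (A_\Z)_j$ as $j$ runs through $m+n-r-1, m+n-r, \dots$; each equation is solvable over $\Z$ because the source degree $j$ is $\ge m > m-1$ and $\mu(c_{j-1})$ has integer coefficients. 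The recursion terminates once the degree exceeds $m+n-2$, so $\omega_r := \sum_j c_j$ is a well-defined element of $\MZ$ with $\gr(\omega_r) = L_r$ and $\Deg(\omega_r) = m+n-r-2$, and grouping $\Delta(\omega_r)$ by degree gives $\Delta(\omega_r) = \delta(L_r) + \sum_{j > m+n-r-2}\bigl(\delta(c_j)+\mu(c_{j-1})\bigr) = 0$.

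The one point that needs care is \emph{integrality}: inverting $\delta$ only over $\Q$ could introduce denominators, so the surjectivity lemma must genuinely be proved over $A_\Z$. What makes this clean is that the relevant cokernel $A_\Z/(s+t)A_\Z$ is the torsion-free ring $\Z[s]/(s^m)$ (again using $m\le n$), vanishing outside degrees $0,\dots,m-1$; everything else is routine bookkeeping with the Hilbert function recorded above. One also gets for free that $\omega_0,\dots,\omega_{m-1}$ are linearly independent, since their leading forms lie in distinct degrees, so they span the $m$-dimensional socle $(M_m\otimes M_n)^\Z$.
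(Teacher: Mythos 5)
Your proof is correct, and it follows the same overall skeleton as the paper's: exhibit the prescribed leading form, check it is killed by $s+t$ (your telescoping computation is exactly the paper's verification that $(s+t)\omega'_r=0$, and your boundary monomials $s^mt^{n-r-1}$, $s^{m-1-r}t^n$ vanish for the same reason), then correct by higher-degree terms using an integral surjectivity statement. Where you genuinely diverge is in how that surjectivity is obtained and deployed. The paper proves that the full operator $\Delta$ maps $\FF_q(\MZ)$ onto $\FF_{q+1}(\MZ)$ for $q\geq n-1$, by writing down explicit integral preimages of the monomials $s^it^{q-i}$ (up to higher-order terms) and running a downward induction on the filtration; the correction term $\omega''_r$ is then produced by one application of this filtered surjectivity. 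You instead split $\Delta=\delta+\mu$ with $\delta$ multiplication by $s+t$, prove the associated-graded surjectivity of $\delta\colon(A_\Z)_j\to(A_\Z)_{j+1}$ for $j\geq m-1$ via the cokernel computation $\Z[s,t]/(s^m,t^n,s+t)\cong\Z[s]/(s^m)$ (valid over $\Z$ since eliminating $t=-s$ is a graded ring isomorphism and $n\geq m$), and then run the degree-by-degree recursion $\delta(c_j)=-\mu(c_{j-1})$. This buys a cleaner treatment of integrality -- the paper must exhibit preimages with integer coefficients by hand, while your cokernel argument makes the absence of denominators automatic -- and it even gives surjectivity in a slightly larger range (target degree $\geq m$ rather than $\geq n$), though both ranges suffice here since the relevant degrees are $\geq n$. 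The cost is that you must carry the two-step recursion explicitly, whereas the paper's filtered surjectivity absorbs all higher-order corrections in one stroke. Your leading form matches the statement verbatim (the paper's $\omega'_r$ differs from it by an overall sign, which is immaterial), and your closing remark that the $\omega_r$ span the $m$-dimensional kernel of $\Delta$ reproduces the remark following the proposition in the paper.
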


\begin{proof}
  We begin by showing  that the homomorphism of $\Z$-modules $$\Delta : \FF_q(\MZ) \to \FF_{q+1}(\MZ)$$
   is surjective for all
   $q= n-1, n, \dots, m+n-2$.
We do this using downward induction on $q$.  For $q=m+n-2$ the codomain $\FF_{m+n-1}(\M)=0$
 and so the result is trivially true.

  Now suppose that $\Delta : \FF_q(\MZ) \to \FF_{q+1}(\MZ)$ is surjective and consider the map
  $\Delta : \FF_{q-1}(\MZ) \to \FF_{q}(\MZ)$.
  It is easy to verify that $(s+t) \sum_{k=0}^{m-i-1} (-1)^{k} s^{i+k} t^{q-i-k-1} = s^i t^{q-i}$ in $\M$
   for $q-n+1 \leq i \leq m-1$. % if $q \geq n-1$.
   Therefore $\gr \Delta( \sum_{k=0}^{m-i-1} (-1)^{k} s^{i+k} t^{q-i-k-1}) = s^i t^{q-i}$.
   By the induction hypothesis, $\Delta(\FF_{q-1}(\MZ)) \supseteq \Delta(\FF_{q}(\MZ)) = \FF_{q+1}(\MZ)$.
   Furthermore,  $$\FF_{q}(\MZ) =  (\oplus_{i=q-n+1}^{m-1} \Z s^i t^{q-i}) \oplus \FF_{q+1}(\MZ)\ .$$
   Thus $\Delta(\FF_{q-1}(\MZ)) = \FF_{q}(\MZ)$ as claimed.
%    $$\Delta( \sum_{k=i}^{m-q} (-1)^k s^{i+j} t^{q-1-i-j}) = s^i t^{q-i} + \epsilon_{q+1,i}$$ for some
%    $\epsilon_{q+1,i} \in \FF_{q+1}(\M)$.

   %We have $\FF_{q-1}(\MZ) =  (\oplus_{i=q-n}^{m-1} \Z s^i t^{q-1-i}) \oplus \FF_{q}(\MZ)$.
%   Thus
%   \begin{align*}
%   \Delta(\FF_{q-1}(\M(\Z))) &= \Delta(\oplus_{i=q-n}^{m-1} \Z s^i t^{q-1-i}) + \Delta(\FF_{q}(\MZ)\\
%                                       &= (\sum_{i=q-n}^{m-1} \Z  \Delta(s^i t^{q-1-i})) + \FF_{q+1}(\MZ).
%   \end{align*}
%   Now it is easy to verify that $(s+t) \sum_{k=i}^{m-q} (-1)^k s^{i+j} t^{q-1-i-j} = s^i t^{q-i}$ in $\M$
%   for $q-n+1 \leq i \leq m-1$ if $q \geq n-1$.
%   Therefore $\gr \Delta( \sum_{k=i}^{m-q} (-1)^k s^{i+j} t^{q-1-i-j}) = s^i t^{q-i}$, which implies that
%    $$\Delta( \sum_{k=i}^{m-q} (-1)^k s^{i+j} t^{q-1-i-j}) = s^i t^{q-i} + \epsilon_{q+1,i}$$ for some
%    $\epsilon_{q+1,i} \in \FF_{q+1}(\M)$.  By induction, there exists $\mu_{q,i} \in \FF_{q}(\MZ$ such that
%    $\Delta(\mu_{q,i}) = \epsilon_{q+1,i}$ and therefore
%    $$\Delta(\sum_{k=i}^{m-q} (-1)^k s^{i+j} t^{q-1-i-j} - \mu_{q,i}) = s^i t^{q-i} $$ for
%   $i =q-n+1,q-n+2,\dots,m-1$.  Thus
%  $\Delta(\FF_{q-1}(\MZ) = \FF_{q}(\MZ)$ as claimed.

  Put $\omega'_r := \sum_{k=0}^{r}  (-1)^{k}s^{m-r+k-1} t^{n-k-1} \in \FF_{m+n-r-2}(\MZ)$.  It is easy to check that
   $(s+t)\omega'_r = 0$ and thus $\Deg(\Delta(\omega'_r)) \geq \Deg(\omega'_r) + 2$.
   Thus $\Delta(\omega'_r)) \in \FF_{m+n-r}(\MZ)$.  By the above, there exists
   $\omega''_r \in \FF_{m+n-r-1}(\MZ)$ such that $\Delta(\omega''_r) = -\Delta(\omega'_r)$.
   Taking $\omega_r := \omega'_r + \omega''_r \in \MZ$ we have
   $\Delta(\omega_r)=0$ with $\gr(\omega_r) = \omega'_r$ as required.
\end{proof}

\begin{remark}
  Note that $M_q^{\Z} =$  (the kernel of $\Delta:M_q \to M_q$) is one dimensional for all $q \geq n-1$.
  Since the direct sum decomposition of $\M$ into indecomposables has $m$ summands, this implies that
  the kernel of $\Delta:\M \to \M$ has dimension $m$ and thus $\{\omega_0,\omega_1,\dots,\omega_{m-1}\}$
  is a basis for this kernel.  Furthermore, this kernel is contained in $\FF_{n-1}(\M)$.
\end{remark}

\begin{theorem}\label{integral decomposition}
  There exist elements $\alpha_0, \alpha_1,\dots,\alpha_{m-1} \in \M$ such that
  for all $r=0,1,\dots,m-1$ we have
  \begin{enumerate}
    \item $\Deg( \Delta^j(\alpha_i) ) = i+j$ for all $0\leq i \leq r$ and $0 \leq j \leq m+n-2i-2$.\label{a}
   \item $\{ \gr(\Delta^j(\alpha_i)) \mid i \leq r, j \leq m+n-2i-2, i+j \leq r \}$ is linearly independent.\label{b}
   \item $\{ \Delta^j(\alpha_i) \mid 0 \leq i \leq r, m+n-i-r-2 \leq j \leq m+n-2i-2 \}$ is a basis for $\FF_{m+n-r-2}(\M)$.\label{c}
    \item $\Delta^{m+n-2r-2}(\alpha_r) = \omega_r$ and $\ell(\omega_r) = m+n-2r-1$.\label{d}
  \end{enumerate}
\end{theorem}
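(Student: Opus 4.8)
The plan is to construct $\alpha_0,\alpha_1,\dots,\alpha_{m-1}$ one at a time by induction on $r$, so that after step $r$ the partial list $\alpha_0,\dots,\alpha_r$ satisfies $(1)$–$(4)$; the statements at earlier levels then follow (for $(3)$ upon intersecting with the relevant $\FF_q(\M)$, since $\M$ is graded). For the base case take $\alpha_0$ to be a suitable scalar multiple of $1\in\Q[s,t]/(s^m,t^n)$. As $\Delta$ is multiplication by $s+t+st$, the element $\Delta^j(\alpha_0)$ is a scalar multiple of $(s+t+st)^j$, whose lowest-degree homogeneous component is a scalar multiple of $(s+t)^j$, and one checks directly that $(s+t)^j\neq 0$ in the quotient exactly when $0\le j\le m+n-2$; this gives $(1)$ for $i=0$. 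Every monomial of total degree $\ge m+n-1$ vanishes modulo $(s^m,t^n)$, so $\Delta^{m+n-1}$ annihilates $\M$ and $\Delta^{m+n-2}(1)=\binomial{m+n-2}{m-1}s^{m-1}t^{n-1}$; normalising $\alpha_0$ so that $\Delta^{m+n-2}(\alpha_0)=\omega_0$ yields $(4)$ with $\ell(\omega_0)=m+n-1$, and $(2)$, $(3)$ are trivial since $\FF_{m+n-2}(\M)$ is one-dimensional.

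For the inductive step, assume $\alpha_0,\dots,\alpha_{r-1}$ are constructed and satisfy $(1)$–$(4)$ at level $r-1$. I would build $\alpha_r$ in three stages. \emph{(i) Leading form:} produce a homogeneous element $g_r\in\M$ of degree $r$ with $(s+t)^j g_r\neq 0$ for all $0\le j\le m+n-2r-2$ and with $(s+t)^{m+n-2r-2}g_r$ equal, up to a nonzero scalar, to $\gr(\omega_r)=\pm\,s^{m-r-1}t^{n-r-1}\sum_{k=0}^r(-1)^k s^k t^{r-k}$; the existence of such a $g_r$ is a finite linear-algebra computation in $\Q[s,t]/(s^m,t^n)$, which one extracts from the identity $(s+t)\sum_k(-1)^k s^{i+k}t^{q-i-k-1}=s^i t^{q-i}$ already used to construct the $\omega_r$. \emph{(ii) Lifting:} pick any $\tilde\alpha_r\in\M$ with $\gr(\tilde\alpha_r)=g_r$; then $\Delta^{m+n-2r-2}(\tilde\alpha_r)=\omega_r+e$ with $e\in\FF_{m+n-r-1}(\M)$. \emph{(iii) Correction:} by $(3)$ at level $r-1$, $\FF_{m+n-r-1}(\M)$ has a basis consisting of elements $\Delta^j\alpha_i$ with $i\le r-1$ and $j$ large, and using $(1)$ one checks that each such basis vector equals $\Delta^{m+n-2r-2}\bigl(\Delta^{j-(m+n-2r-2)}\alpha_i\bigr)$ with $\Delta^{j-(m+n-2r-2)}\alpha_i\in\FF_{r+1}(\M)$; hence $e=\Delta^{m+n-2r-2}(c)$ for some $c\in\FF_{r+1}(\M)$. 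Then $\alpha_r:=\tilde\alpha_r-c$ satisfies $\Delta^{m+n-2r-2}(\alpha_r)=\omega_r$ while $\gr(\alpha_r)=g_r$, so $\Deg(\alpha_r)=r$.

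Now I would verify $(1)$–$(4)$ at level $r$. Property $(1)$ for $i=r$ is immediate from $\gr(\Delta^j\alpha_r)=(s+t)^j g_r\neq 0$ for $0\le j\le m+n-2r-2$, and $\Delta^{m+n-2r-2}(\alpha_r)=\omega_r$ is the first half of $(4)$. Passing from level $r-1$ to level $r$, the new members of the set in $(2)$ are the homogeneous vectors $\{(s+t)^{r-i}g_i:0\le i\le r-1\}\cup\{g_r\}$ in $(M_m\otimes M_n)_r$, while the new members of the basis asserted in $(3)$ are elements with leading form in $(M_m\otimes M_n)_{m+n-r-2}$, those leading forms being $\{(s+t)^{m+n-i-r-2}g_i:0\le i\le r-1\}\cup\{\gr(\omega_r)\}$; each family has exactly $r+1$ members, matching $\dim(M_m\otimes M_n)_r=\dim(M_m\otimes M_n)_{m+n-r-2}=r+1$. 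Thus $(2)$ and $(3)$ reduce to the linear independence of these two families in their respective graded pieces, after which the bound $\ell(\omega_r)\le m+n-2r-1$ — hence equality, completing $(4)$ — follows from the resulting basis of $\FF_{m+n-r-2}(\M)$ together with $\omega_r=\Delta^{m+n-2r-2}(\alpha_r)$.

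The heart of the argument, and the step I expect to be the main obstacle, is precisely this linear independence of homogeneous families: equivalently, that the transition between $\{\omega_0,\dots,\omega_{m-1}\}$ and the socle generators of the indecomposable summands of $\M$ (and correspondingly between the leading forms $g_i$ and monomials) is triangular with respect to the $\Deg$-ordering. I would prove it either by a direct computation with the explicit $\gr(\omega_r)$, exploiting $\sum_{k=0}^r(-1)^k s^k t^{r-k}=\bigl(t^{r+1}-(-s)^{r+1}\bigr)/(s+t)$, or by a subsidiary induction on $r$ tracking the span of $\gr(\omega_0),\dots,\gr(\omega_r)$ as one multiplies the $g_i$ by powers of $s+t$; once the $g_i$ are written out explicitly this amounts to a single identity among binomial coefficients.
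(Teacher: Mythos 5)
Your scaffolding is sound, and your reduction of (2) and (3) to linear independence of homogeneous families in the graded pieces $(\M)_r$ and $(\M)_{m+n-r-2}$ is correct as bookkeeping; but the two facts your construction hinges on are exactly the content of the theorem, and you do not prove either of them. Step (i) requires a homogeneous $g_r$ of degree $r$ with $(s+t)^{m+n-2r-2}g_r$ a nonzero multiple of $\gr(\omega_r)$; this cannot be ``extracted'' from the identity $(s+t)\sum_k(-1)^k s^{i+k}t^{q-i-k-1}=s^it^{q-i}$, because that identity only gives surjectivity of multiplication by $s+t$ on the descending side of the Hilbert function ($q\geq n$), whereas your map $(s+t)^{m+n-2r-2}\colon(\M)_r\to(\M)_{m+n-r-2}$ goes from a piece of dimension $r+1$ up through larger pieces and back down; surjectivity of such a composite between pieces of equal dimension is the strong Lefschetz property of $\Q[s,t]/(s^m,t^n)$ with respect to $s+t$. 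The same is true of the linear independence of $\{(s+t)^{r-i}g_i\}\cup\{g_r\}$ and of $\{(s+t)^{m+n-i-r-2}g_i\}\cup\{\gr(\omega_r)\}$, which you yourself flag as ``the main obstacle'' and only sketch two possible attacks on. These statements are equivalent to the nonvanishing (in $\Q$) of the binomial determinants that the paper's remark after Theorem~\ref{modular decomposition} attributes to Srinivasan's Pascal-matrix computation; they are not a routine finite check, and leaving them unproven leaves the theorem unproven. (Your closing claim that $\ell(\omega_r)\leq m+n-2r-1$ ``follows from the resulting basis of $\FF_{m+n-r-2}(\M)$'' is also under-justified: a degree count alone does not rule out $\omega_r\in\Delta^{m+n-2r-1}(\M)$; one needs the dimension of $\ker\Delta\cap\Delta^{m+n-2r-1}(\M)$ from the Clebsch--Gordan decomposition, or an equivalent input.)

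For comparison, the paper avoids this Lefschetz-type computation entirely. It already knows the abstract decomposition $\M\cong\oplus_{i}M_{m+n-2i-1}$ (Equation~(\ref{C-G for M})), so at stage $r+1$ it can choose $\omega\in\ker\Delta$ with $\ell(\omega)=m+n-2r-3$ and some $\alpha$ with $\Delta^{m+n-2r-4}(\alpha)=\omega$. Writing $\alpha=\sum_{i+j\leq r}a_{ij}\Delta^j(\alpha_i)+\alpha_{r+1}$ via the inductive hypotheses (1)--(3), and analyzing $0=\Delta^{m+n-2r-3}(\alpha)$ degree by degree using the basis statement (3), it forces all $a_{ij}=0$, hence $\Deg(\alpha)=r+1$, hence $\omega$ proportional to $\omega_{r+1}$; the statements (2)--(4) at level $r+1$ then follow by dimension counts. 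So the hard combinatorial input you would need is replaced by the representation-theoretic input already established. If you want to keep your explicit leading-form route, you must actually prove the Lefschetz/determinant statements (e.g.\ via the Pascal-matrix determinant), or else import the Clebsch--Gordan decomposition and rearrange your argument along the paper's lines; as written, the proposal has a genuine gap.
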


\begin{proof}
  We proceed by complete induction on $r$.
  For $r=0$ we take
     $\alpha_0 = 1/{\binomial{m+n-2}{m-1}}$.
     Then $\Delta^{m+n-2}(\alpha_0) = s^{m-1}t^{n-1} = \omega_0$.   Clearly this implies that
     $\ell(\omega_0) = m+n-1$.   It is also clear that $\{\omega_0\}$ is a basis for the
     one dimensional space $\FF_{m+n-2}(\M)$.
     Since $\Deg(\Delta^{m+n-2}(\alpha_0))=m+n-2$ we must have
     $\Deg(\Delta^{j}(\alpha_0))=j$ for all $0 \leq j \leq m+n-2$.  Finally, $\{\gr(\alpha_0)\} = \{\alpha_0\}$ is linearly independent.

     Assume then that the four assertions hold for all values less than or equal to $r$ and consider these
   four assertions for the value $r+1$.
  By the Clebsch-Gordan formula, $M_m \otimes M_n$ contains a summand isomorphic to
 $M_{m+n-2r-3}$.  Thus there exists $\omega \in \ker \Delta$ with $\ell(\omega)=m+n-2r-3$.
 Take $\alpha$ such that $\Delta^{m+n-2r-4}(\alpha) = \omega$.
 Since $\ell(\omega_k) > m+n-2r-3$ for all $k \leq r$ we may write 
 $\omega = \sum_{k=r+1}^{m-1} c_k\omega_k$ for some $c_k \in \Q$.  Therefore $\Deg(\omega) \leq \Deg(\omega_{r+1}) = m+n-r-3$.
 This implies that $\Deg(\alpha) \leq r+1$.

   Combining (\ref{a}) and (\ref{b}) we see that  $\{ \gr(\Delta^j(\alpha_i)) \mid i \leq r, i+j \leq r \}$ is a basis for
   $\oplus_{d=0}^{r} (\M)_d$.  Therefore we may write
   $\alpha = \sum_{i+j \leq r} a_{ij} \Delta^j(\alpha_i) + \alpha_{r+1}$ where $\alpha_{r+1} \in \FF_{r+1}(\M)$ and
   $a_{ij} \in \Q$ for all $i,j$.

   Now
   \begin{align*}
   0 &= \Delta(\omega) = \Delta^{m+n-2r-3}(\alpha)\\
       &=  \sum_{i+j \leq r} a_{ij} \Delta^{m+n-2r+j-3}(\alpha_i) + \Delta^{m+n-2r-3}(\alpha_{r+1})
 \end{align*}
 We consider this last expression in each degree $d=0,1,\dots,m+n-r-3$.
 The component of this expression in degree $d$ for $d=0,1,\dots,m+n-2r-4$ is trivially 0.
 In degree $d=m+n-2r-3$ we find only $a_{00} \Delta^{m+n-2r-3}(\alpha_0)$ and thus $a_{00}=0$.
 In degree $d=m+n-2r-2$ we find $a_{01} \Delta^{m+n-2r-3}(\alpha_1) + a_{10} \Delta^{m+n-2r-2}(\alpha_0)$.
 Therefore (using (\ref{c})) we have $a_{01}=a_{10}=0$.
 Continuing in this manner up to degree $d=m+n-r-3$ we find that $a_{ij}=0$ for all $i,j$.
 Therefore $\alpha = \alpha_{r+1}$ and $\Deg(\alpha) \geq r+1$.  We already observed that
 $\Deg(\alpha) \leq r+1$ and therefore $\Deg(\alpha_{r+1})=r+1$.
 Since $\Deg(\Delta^{m+n-2r-4}(\alpha_{r+1}))=m+n-r-3$ we must have
 $\Deg(\Delta^{j}(\alpha_{r+1}))=j+r+1$ for all $j=0,1,\dots,m+n-2r-4$ which proves (\ref{a}).
 In particular, $\Deg(\omega) \geq m+n-r-3$ and so we must have $\omega=c_{r+1}\omega_{r+1}$.
Take $\alpha_{r+1} = c_{r+1}^{-1}\alpha$.  Then  
 $\Delta^{m+n-2r-4}(\alpha_{r+1}) = \omega_{r+1}$ which proves (\ref{d}).

   Now $\{\Delta^j(\alpha_i) \mid 0 \leq i \leq r+1, 0 \leq j \leq m+n-2i-2\}$ is a basis for
   $\oplus_{i=0}^{r+1} M_{m+n-2i-1}$ and so in particular is linearly independent.
   Counting dimensions this implies that
   $\{\Delta^j(\alpha_i) \mid i+j \geq m+n-r-1, 0 \leq i \leq r+1, 0 \leq j \leq m+n-2i-2\}$ is a basis for
   $\FF_{m+n-r-3}(\M)$ which proves (\ref{c}).

   Finally we prove (\ref{b}).  Assume there exists a linear relation
   $$\sum_{i+j=d} b_{ij} \gr(\Delta^j(\alpha_i)) = 0$$
    with scalars $b_{ij} \in \Q$ where $d \leq r+1$.
  This linear relation (together with (\ref{a})) implies that
  $\sum_{i+j=d} b_{ij} \Delta^{m+n-r-d+j-3}(\alpha_i)) \in \FF_{m+n-r-2}(\M)
  = \Span_{\Q}\{\Delta^k(\alpha_i) \mid 0 \leq i \leq r, i+k \geq m+n-r-2, k \leq m+n-2k-2\}$.
  But by (\ref{c}) this means we may write
  $\sum_{i+j=d} b_{ij} \Delta^{m+n-r-d+j-3}(\alpha_i)) = \sum_{i+j > m+n-3} b'_{ij} \Delta^j(\alpha_i)$.
  But we have already seen that  $\{\Delta^j(\alpha_i) \mid i+j \geq m+n-r-1, 0 \leq i \leq r+1, 0 \leq j \leq m+n-2i-2\}$
  is linearly independent.  Therefore each $b_{ij}=0$ which proves (\ref{b}).
  \end{proof}

\subsection{Decomposing $V_m \otimes V_n$}
Next we want to determine an explicit decomposition of a tensor product of indecomposable $C_p$-modules, $V_m \otimes V_n$.
Proposition~\ref{decomp} gives an abstract decomposition of $V_m \otimes V_n$.
 We want to obtain a more explicit description of this decomposition.
 To do this we consider the integer lattices $M_m(\Z)$ and $M_n(\Z)$
 and the surjection $\rho: M_m(\Z) \otimes M_n(\Z) \to \V$ given by reduction modulo the prime $p$.
 
   The following well-known result and its proof are included for the reader's convenience.
   \begin{lemma}\label{lattice result}
      Let $U$ be an $n$ dimensional $\Q$ vector space $U \cong \Q^n$ and suppose $W$ is an 
      $r$ dimensional subspace of $U$.  Let $U(\Z) = \Z^n$ be the natural lattice in $U$.  Let $p$ be prime
      and let $\rho$ denote the reduction modulo $p$ map.
      Then $W(\Z) := W \cap U(\Z)$ is a rank $r$ lattice and $\rho(W(\Z)) \cong \F_p^r$. 
   \end{lemma}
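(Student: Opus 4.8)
The plan is to show that $W(\Z) := W \cap U(\Z)$ is a \emph{saturated} (pure) sublattice of $U(\Z)$, so that the quotient group is torsion-free, and then to invoke the structure theorem for finitely generated abelian groups to produce a $\Z$-basis of $U(\Z)$ compatible with $W(\Z)$; reducing such a basis modulo $p$ immediately gives the claim.

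First I would verify that $W(\Z)$ is a lattice of rank $r$. As a subgroup of the free abelian group $U(\Z)=\Z^n$ it is free of some finite rank. On the other hand $W(\Z)$ spans $W$ over $\Q$: any $w\in W$ can be written with a common denominator $N$, so $Nw\in W\cap U(\Z)=W(\Z)$; hence $W(\Z)\otimes_\Z\Q = W$ and $\rank W(\Z) = \dim_\Q W = r$.

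The key step is saturation: if $v\in U(\Z)$ and $kv\in W(\Z)$ for some nonzero integer $k$, then $v\in W(\Z)$. Indeed $kv\in W$ and $W$ is a $\Q$-subspace, so $v = k^{-1}(kv)\in W$; since also $v\in U(\Z)$, we get $v\in W\cap U(\Z) = W(\Z)$. Consequently $U(\Z)/W(\Z)$ is torsion-free, and being finitely generated it is free, of rank $n-r$. The short exact sequence $0\to W(\Z)\to U(\Z)\to U(\Z)/W(\Z)\to 0$ therefore splits, so there is a $\Z$-basis $e_1,\dots,e_n$ of $U(\Z)$ with $e_1,\dots,e_r$ a $\Z$-basis of $W(\Z)$ (equivalently, the stacked basis / Smith normal form theorem applies with all elementary divisors equal to $1$). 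Applying $\rho = -\otimes_\Z\F_p$, the images $\rho(e_1),\dots,\rho(e_n)$ form an $\F_p$-basis of $\rho(U(\Z)) = \F_p^n$, and $\rho(W(\Z))$ is the $\F_p$-span of $\rho(e_1),\dots,\rho(e_r)$, hence an $r$-dimensional subspace, i.e. $\rho(W(\Z))\cong\F_p^r$.

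There is no genuine obstacle here; the one point that must not be skipped is the saturation argument, since without it $U(\Z)/W(\Z)$ could have $p$-torsion and $\rho(W(\Z))$ could have dimension strictly less than $r$.
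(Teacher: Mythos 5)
Your proof is correct, and it takes a genuinely different route from the paper's. You establish that $W(\Z)=W\cap U(\Z)$ has rank $r$ (by clearing denominators) and then argue via \emph{purity}: $W(\Z)$ is saturated in $U(\Z)$, so $U(\Z)/W(\Z)$ is finitely generated and torsion-free, hence free, so the inclusion splits and a $\Z$-basis of $W(\Z)$ extends to a $\Z$-basis of $U(\Z)$; reducing that compatible basis modulo $p$ gives $\dim_{\F_p}\rho(W(\Z))=r$ at once. The paper instead works directly with the kernel $K_0=pU(\Z)$ of $\rho$: it scales a $\Q$-basis of $W$ to vectors $u_i\in U(\Z)\setminus pU(\Z)$ with $v_i=pu_i$, shows $W(\Z)\cap K_0$ has rank $r$ and is generated by the $v_i$ while the $u_i$ form a basis of $W(\Z)$, and concludes $\rho(W(\Z))\cong W(\Z)/(W(\Z)\cap K_0)\cong(\Z/p\Z)^r$ by an index count, never invoking the stacked-basis/Smith normal form theorem. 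The two arguments hinge on the same fact — that the kernel of $\rho$ restricted to $W(\Z)$ is exactly $pW(\Z)$, i.e.\ the quotient has no $p$-torsion — but you isolate it as the abstract saturation property and then quote standard structure theory, whereas the paper proves it by hand through its explicit choice of scaled basis; your version is slightly more general (it works for all primes uniformly and yields a compatible basis as a bonus), while the paper's is more self-contained and makes the index $p^r$ visible explicitly. Your closing remark correctly identifies saturation as the one step that cannot be skipped, which is precisely where the paper's argument does its real work.
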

   
   \begin{proof}
     The lattice $K_0 := p U(\Z)$ is the kernel of the map $\rho$.   
     $W(\Z) \cap K_0$ is a free abelian group whose rank equals its minimal number of generators. 
      Choose a vector space basis $\{v_1,v_2,\dots,v_r\}$ of $W$.
      Scaling the $v_i$ we may suppose that $v_i \in K_0$ and $u_i := (1/p)v_i \in U(\Z) \setminus K_0$.
        Thus $W(\Z) \cap K_0$ has rank at least $r$.    If $W(\Z) \cap K_0$ required more than $r$ generators we would find a relation
      among them from the linear $\Q$ linear dependence among them.
     Thus the rank of the lattice $W(\Z) \cap K_0$ is $r$ and this lattice is generated by $v_1,v_2,\dots,v_r$.
   
     Furthermore $\{u_1,u_2,\dots,u_r\}$ is a basis of $W(\Z)$.  
     To see this, take any $w \in W(\Z)$.  Then 
     $pw \in W(\Z) \cap K_0$ and so we may write $pw = \sum_{i=1}^r c_i v_i$ where each $c_i \in \Z$.  
    Then $w = \sum_{i=1}^r c_i u_i$.
    This implies that the index of $W(\Z) \cap K_0$ in $W(\Z)$ is $p^r$ and 
    $\rho(W(\Z)) \cong W(\Z) / (W(\Z) \cap K_0) \cong (\Z/p\Z)^r$. 
%     
%     
%     We have a Q-vector space M (M_m \otimes M_n) of dimension t and the natural Z-lattice L \cong Z^t inside it.
%Inside L is the lattice L_0 which is the kernel of \rho.  (If v \in L then pv \in L_0 so L_0 =(pZ)^t).
%We have an N dimensional subspace K (KM_r) of M.    Let L' := K \cap L and L'_0 := K \cap L_0.
%L' is a free abelian group whose rank equals the minimal number of generators of L'.  Choose a basis of K and clear denominators to
%get a basis of L'.   If L' required more than N generators we would find a relation among them from the linear Q relation among them.
%Thus rank L'=N.  Similarly rank of L'_0=N.  
%
% Let {v_1,...,v_N} be a basis for L'_0.  We may scale the v_i such that all u_i := (1/p)v_i \in (L \setminus L_0).
%[In fact, I think the v_i must satisfy this.]  Then all u_i \in (L' \setminus L'_0).
%This implies {u_1,...,u_N} is a basis of L'.  To see this, take any w \in L'.  Then 
%pw \in K cap L_0 = L'_0 and so we may write pw = \sum_i=1^N c_i v_i with each c_i \in Z.  
%Then w = \sum_i=1^N c_i u_i.
%
% This implies that the index of L'_0 in L' is p^N and L' / L'_0 \cong (Z/pZ)^N.  But
%\rho(L') \cong L' / L'_0.
%Thus \rho(L') has F_p dimension N.
%
   \end{proof}

\begin{theorem}
 Suppose  $1 \leq m \leq n \leq p$ with $m+n \geq p+1$.
 Then  $\ell(\rho(\omega_r))=p$ for all $r=0,1,\dots,m+n-p-1$.
\end{theorem}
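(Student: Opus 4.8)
The plan is to reduce the statement to an image computation. Since $\rho(\omega_r)\neq 0$ (its leading form $\gr(\omega_r)$ has $\pm 1$ coefficients, so it survives reduction mod $p$) and $\ell(w)\le p$ for every nonzero $w\in\V$, it suffices to prove that $\rho(\omega_r)\in\Delta^{p-1}(\V)$ for $r=0,1,\dots,m+n-p-1$. Two preliminary observations will be used. First, since $(\sigma-1)^p=\sigma^p-1=0$ in characteristic $p$, the transfer satisfies $\tr=\sum_{i=0}^{p-1}(1+\Delta)^i=\Delta^{p-1}$ as operators on $\V$, so $\Delta^{p-1}(\V)=\Im(\tr)$. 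Second, writing $\V=\Fp[s,t]/(s^m,t^n)$ with $\Delta=s+t+st$, we have $\Delta^{p-1}=\sum_{j\ge 0}\binom{p-1}{j}(s+t)^{p-1-j}(st)^{j}$, whose $j$-th summand is a multiplication operator raising total degree by $p-1+j$; in particular its "leading part" (raising degree by $p-1$) is multiplication by $(s+t)^{p-1}$.

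The core computation is as follows. Fix $r$ with $0\le r\le m+n-p-1$ and set $b:=m+n-p-r-1$, noting $0\le b\le n-1$ (the inequality $b\le n-1$ holds because $m\le p$). Using $\binom{p-1}{k}\equiv(-1)^k\pmod p$, one checks directly that, modulo $(s^m,t^n)$, the only monomials of $(s+t)^{p-1}t^{b}$ that survive are those $s^{m-1-i}t^{n-r+i-1}$ with $0\le i\le r$, and that
$$(s+t)^{p-1}\,t^{b}=\sum_{i=0}^{r}(-1)^{m-1-i}\,s^{m-1-i}t^{n-r+i-1}=\pm\,\gr(\omega_r)\ .$$
Choosing the sign $\epsilon\in\{\pm1\}$ so that $(s+t)^{p-1}(\epsilon t^{b})=\gr(\omega_r)$ and putting $\xi_r:=\Delta^{p-1}(\epsilon t^{b})\in\Delta^{p-1}(\V)$, the $j=0$ part of $\xi_r$ is $\gr(\omega_r)$ of degree $(m+n-p-r-1)+(p-1)=m+n-r-2$, while the $j\ge 1$ parts have degree $\ge m+n-r-1$. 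Since $\rho$ is graded and $\gr(\omega_r)$ reduces to a nonzero form, $\Deg(\rho(\omega_r))=m+n-r-2$ and $\gr(\rho(\omega_r))=\gr(\omega_r)$; hence $\gr(\xi_r)=\gr(\rho(\omega_r))$ and $\Deg(\xi_r)=\Deg(\rho(\omega_r))$.

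Now I argue by induction on $r$. Put $\eta_r:=\xi_r-\rho(\omega_r)$. The degree-$(m+n-r-2)$ components cancel, so $\eta_r\in\FF_{m+n-r-1}(\V)$; and $\Delta(\eta_r)=\Delta^{p}(\epsilon t^{b})-\Delta(\rho(\omega_r))=0$, so $\eta_r$ lies in the socle $\V^{C_p}$. The $m$ elements $\rho(\omega_0),\dots,\rho(\omega_{m-1})$ have leading forms in the pairwise distinct degrees $m+n-2,m+n-3,\dots,n-1$, so they are linearly independent, and as $\dim\V^{C_p}=m$ they form a basis of the socle. Expanding $\eta_r=\sum_{r'}c_{r'}\rho(\omega_{r'})$ and comparing lowest degrees (the lowest degree occurring in the sum is $m+n-2-\max\{r':c_{r'}\neq 0\}$) forces $c_{r'}=0$ whenever $r'\ge r$; thus $\eta_r\in\Span\{\rho(\omega_{r'}):0\le r'\le r-1\}$, which by the inductive hypothesis lies in $\Delta^{p-1}(\V)$. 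Therefore $\rho(\omega_r)=\xi_r-\eta_r\in\Delta^{p-1}(\V)$, giving $\ell(\rho(\omega_r))=p$. The base case $r=0$ is immediate, since $\FF_{m+n-1}(\V)=0$ forces $\eta_0=0$.

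The main obstacle I anticipate is not the leading-term identity, which is a mechanical binomial computation, but the organization of the correction of lower-order terms: the whole argument hinges on recognizing that the error $\eta_r$ is automatically a combination of socle elements $\rho(\omega_{r'})$ with strictly smaller index, so that the induction closes. A secondary point needing care is the bookkeeping of which monomials $s^{k}t^{b+p-1-k}$ survive reduction modulo $(s^m,t^n)$, together with the verification that the stated range $0\le r\le m+n-p-1$ is exactly the range for which one can choose $b=m+n-p-r-1\ge 0$.
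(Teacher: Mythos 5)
Your argument is correct, but it proves the theorem by a genuinely different route than the paper. The paper's proof is a pure dimension count: from Proposition 8.1 it reads off both $\dim_{\F_p}\ker\Delta = m$ and $\dim_{\F_p}\Delta^{p-1}(V_m\otimes V_n) = m+n-p$, notes that $\Delta^{p-1}(V_m\otimes V_n)\subseteq \ker\Delta\cap\FF_{p-1}(V_m\otimes V_n) = \Span_{\F_p}\{\rho(\omega_0),\dots,\rho(\omega_{m+n-p-1})\}$, and concludes by equality of dimensions. You instead construct explicit $\Delta^{p-1}$-preimages: the congruence $\binom{p-1}{k}\equiv(-1)^k \pmod p$ gives $(s+t)^{p-1}t^{\,m+n-p-r-1}=\pm\gr(\omega_r)$ in $\F_p[s,t]/(s^m,t^n)$ (your sign and exponent bookkeeping checks out, including $0\le b\le n-1$), and the induction on $r$ cleanly absorbs the lower-order error $\eta_r$, which is forced into $\Span\{\rho(\omega_{r'}) : r'<r\}$ by the socle-basis and leading-degree argument. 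Both proofs use Proposition 8.1 to know that the socle has dimension $m$ (so that the $\rho(\omega_i)$ are a basis of $\ker\Delta$), but you avoid the second input, the multiplicity $m+n-p$ of $V_p$; indeed your construction independently shows $\dim\Delta^{p-1}(V_m\otimes V_n)\geq m+n-p$, so it reproves that part of the decomposition rather than quoting it. What the paper's count buys is brevity; what your version buys is explicitness — it exhibits concrete elements (monomials $t^b$, up to socle corrections) whose images under $\Delta^{p-1}=\tr$ are the socle generators $\rho(\omega_r)$, which is in the constructive spirit of the later sections on transfers.
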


\begin{proof}
   By Proposition~\ref{decomp}, the kernel of $\Delta$ on $V_m \otimes V_n$ is an $m$ dimensional
   $\field_p$ vector space.  Since $\{\rho(\omega_0),\rho(\omega_1),\dots,\rho(\omega_m)\}$ is a 
   linearly independent subset of $\ker \Delta$, it must be a basis for $\ker \Delta$.
  Thus
  \begin{align*}
   \Delta^{p-1}(\V) &\subseteq \ker \Delta \cap \FF_{p-1}(\V)\\
                             &= \Span_{\field_p}\{\rho(\omega_0),\rho(\omega_1),\dots,\rho(\omega_{m}\} \cap \FF_{p-1}(\V)\\  
                               & = \Span_{\field_p}\{\rho(\omega_0),\rho(\omega_1),\dots,\rho(\omega_{m+n-p-1})\}.
   \end{align*}
     By Proposition~\ref{decomp}, $\Delta^{p-1}(\V)$ has dimension $m+n-p$ which implies that
 $\Delta^{p-1}(\V) = \Span_{\field_p}\{\rho(\omega_0),\rho(\omega_1),\dots,\rho(\omega_{m+n-p-1})\}$.
 In particular $\ell(\rho(\omega_r))=p$ for $r=0,1,\dots,m+n-p-1$.
\end{proof}

\begin{proposition}
   There exist $\b{\beta}_0,\b{\beta}_1,\dots,\b{\beta}_{m+n-p-1} \in \V$ such that
  $\Delta^{p-1}(\b{\beta}_r) = \rho(\omega_r)$ and $\Deg(\b{\beta}_r)=m+n-p-r-1$ for $r=0,1,\dots,m+n-p-1$ .
\end{proposition}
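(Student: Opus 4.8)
The plan is to construct the elements $\b{\beta}_r$ by induction on $r$. At stage $r$ I would first exhibit an explicit homogeneous $b_r\in\V$ of degree $m+n-p-r-1$ for which $\Delta^{p-1}(b_r)$ is a nonzero scalar multiple of $\rho(\omega_r)$ plus a linear combination of $\rho(\omega_0),\dots,\rho(\omega_{r-1})$, and then subtract off the already-constructed $\b{\beta}_i$ with $i<r$ to obtain a $\b{\beta}_r$ with $\Delta^{p-1}(\b{\beta}_r)=\rho(\omega_r)$ exactly. Throughout I would work in the model $\V=\field_p[s,t]/(s^m,t^n)$ with $\Delta=s+t+st$.

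Two facts feed the argument. First, $\gr(\omega_r)$ is a sum of monomials with coefficients $\pm1$, so reduction modulo $p$ does not annihilate it: $\gr(\rho(\omega_r))=\rho(\gr(\omega_r))\ne0$ and $\Deg(\rho(\omega_r))=\Deg(\omega_r)=m+n-r-2$. Second, the proof of the preceding theorem identifies $\Delta^{p-1}(\V)=\Span_{\field_p}\{\rho(\omega_0),\dots,\rho(\omega_{m+n-p-1})\}$; since in a combination $\sum_i a_i\rho(\omega_i)$ the summands of smaller index sit in strictly higher degree, such a combination has $\Deg=m+n-2-\max\{i:a_i\ne0\}$, whence for $0\le r\le m+n-p-1$ one obtains the sharper statement $\Delta^{p-1}(\V)\cap\FF_{m+n-r-2}(\V)=\Span_{\field_p}\{\rho(\omega_0),\dots,\rho(\omega_r)\}$.

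For the inductive step I would take $b_r:=t^{\,m+n-p-r-1}\in\V$. The constraints $1\le m\le n\le p$ and $m+n\ge p+1$ guarantee $0\le m+n-p-r-1\le n-1$ and $r\le m-1$, so $b_r$ is a nonzero homogeneous element of the required degree. Writing $\Delta^{p-1}=\bigl((s+t)+st\bigr)^{p-1}=(s+t)^{p-1}+(\text{higher-degree terms})$, the lowest-degree component of $\Delta^{p-1}(b_r)$ is $(s+t)^{p-1}b_r$; expanding $(s+t)^{p-1}=\sum_{j=0}^{p-1}(-1)^js^jt^{p-1-j}$ and reducing modulo $(s^m,t^n)$, the surviving monomials are precisely those with $m-r-1\le j\le m-1$, and a short bookkeeping identifies $(s+t)^{p-1}b_r=(-1)^{m-r-1}\gr(\rho(\omega_r))$, a nonzero element of degree $m+n-r-2$. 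Hence $\Delta^{p-1}(b_r)\in\FF_{m+n-r-2}(\V)$ with leading form $(-1)^{m-r-1}\gr(\rho(\omega_r))$, so by the identity above $\Delta^{p-1}(b_r)=\sum_{i=0}^r a_i\rho(\omega_i)$, and comparison of leading forms forces $a_r=(-1)^{m-r-1}\ne0$. Setting $\b{\beta}_r:=a_r^{-1}\bigl(b_r-\sum_{i<r}a_i\b{\beta}_i\bigr)$ and using $\Delta^{p-1}(\b{\beta}_i)=\rho(\omega_i)$ inductively yields $\Delta^{p-1}(\b{\beta}_r)=\rho(\omega_r)$; since $\Deg(\b{\beta}_i)=m+n-p-i-1>m+n-p-r-1$ for $i<r$, the summand $b_r$ has strictly smallest degree in this expression, so $\Deg(\b{\beta}_r)=m+n-p-r-1$, completing the induction.

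The step I expect to require real care is the middle computation: verifying, in the regime $m+n\ge p+1$, that reducing $(s+t)^{p-1}t^{m+n-p-r-1}$ modulo $(s^m,t^n)$ leaves exactly the $r+1$ monomials constituting $\gr(\rho(\omega_r))$, with the correct signs, together with the degree inequalities ($0\le m+n-p-r-1\le n-1$ and $r\le m-1$) that make $b_r$ well defined and nonzero. Everything else is bookkeeping with the grading and an appeal to the preceding theorem.
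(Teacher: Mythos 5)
Your argument is correct, but it follows a genuinely different route from the paper. The paper starts from abstract preimages: the preceding theorem gives $\b{\beta'}_r$ with $\Delta^{p-1}(\b{\beta'}_r)=\rho(\omega_r)$, and since $\Delta$ raises $\Deg$ by at least one this forces $\Deg(\b{\beta'}_r)\leq m+n-p-r-1$; the work is then a \emph{downward} induction on $r$, expanding $\b{\beta'}_r$ against the partial basis $\{\Delta^j(\b{\beta}_i)\}$ of $\V$ modulo the filtration and discarding the terms whose images under $\Delta^{p-1}$ have the wrong degree, so as to upgrade the inequality to equality. You instead produce an explicit candidate preimage, the monomial $b_r=t^{m+n-p-r-1}$, and compute directly that $\gr(\Delta^{p-1}(b_r))=(s+t)^{p-1}b_r$ is a nonzero multiple of $\gr(\rho(\omega_r))$, using $\binom{p-1}{j}\equiv(-1)^j \pmod p$ and the reduction modulo $(s^m,t^n)$; your range check $m-r-1\leq j\leq m-1$ and the identification of the surviving monomials with those of $\gr(\omega_r)$ are correct (the constant is $(-1)^m$ or $(-1)^{m-r-1}$ depending on which normalization of $\gr(\omega_r)$ in the paper one uses -- the statement and the proof of the earlier proposition differ by $(-1)^{r+1}$ -- but only its nonvanishing matters). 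Combining this with the span description $\Delta^{p-1}(\V)=\Span_{\field_p}\{\rho(\omega_0),\dots,\rho(\omega_{m+n-p-1})\}$ from the previous proof, and the observation that distinct $\rho(\omega_i)$ have leading forms in distinct degrees, your triangular correction by the already-built $\b{\beta}_i$ ($i<r$) in an \emph{upward} induction gives both $\Delta^{p-1}(\b{\beta}_r)=\rho(\omega_r)$ and the degree statement. What your version buys is explicitness and brevity: it exhibits concrete generators ($t^{m+n-p-r-1}$, up to lower-index corrections) and in effect proves by hand, in the special case needed, the surjectivity of multiplication by $(s+t)^{p-1}$ that the paper's later remark attributes to Srinivasan's Pascal-matrix determinant; the paper's version avoids any binomial computation and works uniformly from the filtration bookkeeping already set up in Theorem~\ref{integral decomposition}.
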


\begin{proof}
  By the above theorem there must exist $\b{\beta'}_0,\b{\beta'}_1,\dots,\b{\beta'}_{m+n-p-1} \in \V$ such that
  $\Delta^{p-1}(\b{\beta'}_r) = \rho(\omega_r)$ for $r=0,1,\dots,m+n-p-1$.
  Since $\Deg(\rho(\omega_r)) = m+n-r-2$ we have $\Deg(\b{\beta'}_r) \leq m+n-p-1-r$ for $r=0,1,\dots,m+n-p-1$.
  This implies that $\Deg(\b{\beta'}_{m+n-p-1})=0$ and so we may take $\b{\beta}_0=\b{\beta'}_0$.

  Assume, by downward induction, that we have chosen $\b{\beta}_i$ with $\Delta^{p-1}(\b{\beta}_i) = \rho(\omega_i)$
  and $\Deg(\b{\beta}_i)=m+n-p-i-1$ for $i=r+1,r+2,\dots,m+n-p-1$ (where $r \geq 0$).
  The set $A_{r+1} := \{\Delta^j(\b{\beta}_i) \mid r+1 \leq i \leq m+n-p-1, 0 \leq j \leq i-r-1\}$ is linearly independent
  and consists of elements $x$ satisfying $\Deg(x) \leq r+1$.
  Since the cardinality of $A_{r+1}$ is
  $\sum_{i=r+1}^{m+n-p-1} (i-r) = \binomial{m+n-p-1-r}{2} = \dim\left((V_m \otimes V_n) / \FF_{m+n-p-r-1}(V_m \otimes V_n)\right)$,
  it follows that the natural image of $A_{r+1}$ forms a basis for $(V_m \otimes V_n) / \FF_{m+n-p-r-1}(V_m \otimes V_n)$.
  Choose $\b{\gamma}$ with $\Deg(\b{\gamma})=m+n-p-r-1$ such that the set
  $\{\b{\gamma}\} \sqcup \{\Delta^j(\b{\beta}_i) \mid r+1 \leq i \leq m+n-p-1, 0 \leq j \leq i-r\}$ similarly yields a basis for
  $(V_m \otimes V_n) / \FF_{m+n-p-r}(V_m \otimes V_n)$.
  Write $\b{\beta'}_r = c_0 \b{\gamma} + \sum_{i=r+1}^{m+n-p-1} \sum_{j=0}^{i-r} c_{ij}\Delta^j(\b{\beta}_i) + \b{\gamma'}$
  where $c_0,c_{ij} \in \F_p$ and $\b{\gamma'} \in \FF_{m+n-p-r}(V_m \otimes V_n)$.
  Then $\rho(\omega_r) = \Delta^{p-1}(\b{\beta'}_r) = \Delta^{p-1}(c_0 \b{\gamma}) +  \sum_{i=r+1}^{m+n-p-1} c_{i0} \rho(\omega_{m+n-p-i-1}) + \Delta^{p-1}(\b{\gamma'})$
  where $\Deg(\Delta^{p-1}(\b{\gamma'})) > \Deg(\rho(\omega_{m+n-p-i-1}))$ for all $i \geq r+1$.
  Therefore $c_{i0}=0$ for all $i=r+1,r+2,\dots,m+n-p-1$ and $\rho(\omega_r) = \Delta^{p-1}(c_0 \b{\gamma} + \b{\gamma'})$.
  Setting $\b{\beta}_r = c_0 \b{\gamma} + \b{\gamma'}$ yields $\Deg(\b{\beta}_r)=m+n-p-r-1$ and $\Delta^{p-1}(\b{\beta}_r)=\rho(\omega_r)$ as required.
\end{proof}

%  By the above theorem there must exist $\b{\beta_0},\b{\beta_1},\dots,\b{\beta_{m+n-p-1}} \in \V$ such that
%  $\Delta^{p-1}(\b{\beta_r}) = \rho(\omega_r)$ for $r=0,1,\dots,m+n-p-1$.
%  Since $\Deg(\rho(\omega_r)) = m+n-r-2$ we have $\Deg(\b{\beta_r}) \leq m+n-p-1-r$ for $r=0,1,\dots,m+n-p-1$.
%  Furthermore, since each $\b{\beta_r} \notin \Delta(V_m \otimes V_n)$ we assume that
%  $\Deg(\b{\beta_r}) =  m+n-p-1-r$ for $r=0,1,\dots,m+n-p-1$.  By Proposition~\ref{decomp},
%  $V_m \otimes V_n \cong \oplus_{r=m+n-p}^{m-1} V_{m+n-2r-1} \oplus (m+n-p)V_p$.
%  Let $\b{\beta_r}$ denote a generator of the summand $V_{m+n-2r-1}$ for $m+n-p \leq r \leq m-1$.

  %\begin{lemma}
%      For $r=0,1,\dots,m+n-p-1$ we may take $\b{\beta_r}$ to be a non-zero scalar multiple of $\rho(\alpha_{m+n-p-1-r})$
%  \end{lemma}
%  \begin{proof}
%  Since $\Deg(\omega_r)=m+n-r-2$ we have $\Deg(\beta_r) \leq m+n-p-r-1$.
%    We proceed by downward induction on $r$.  For $r=m+n-p-1$ we have $\Deg(\beta_r) \leq 0$ and thus
%    $\beta_{m+n-p-1}$ must be a non-zero scalar multiple of $\rho(
%  \end{proof}
%

 We have seen that $M_m \otimes M_n \cong \bigoplus_{r=0}^{m-1} M_{m+n-2i-1}$
 and that we may arrange this decomposition so that the socle of the summand $M_{m+n-2i+1}$ is
 spanned by $\omega_i$.  Furthermore $\alpha_i$ is a generator of the summand $M_{m+n-2i+1}$  
 with $\Deg(\alpha_i)=i$ and $\Delta^{m+n-2i}(\alpha_i) = \omega_i$ for $i=0,1,\dots,m-1$
 Moreover, by clearing denominators, we may assume that  $\alpha_i \in (M_m \otimes M_n)(\Z)$ with
 $\rho(\alpha_i) = a_i \omega_i \ne 0$ for some $a_i \in \Z$.

%\begin{lemma}
%  $\Delta^{m+n-2r-2}(\rho(\alpha_r)) \neq 0$ for $m+n-p \leq r \leq m-1$.
%\end{lemma}
%\begin{proof}
%  We proceed by contradiction.
%  Let $r$ be minimal such that $\Delta^{m+n-2r-2}(\rho(\alpha_r)) = 0$ and $r \geq m+n-p$.
%  Note that $\{\Delta^j(\rho(\alpha_i)) : 0 \leq i \leq r-1, 0 \leq j \leq r-i\}$ is a linearly independent set.
%  Furthermore $\Deg(\Delta^j(\rho(\alpha_i)))=i+j$
%\end{proof}

 \begin{theorem}\label{modular decomposition}
 Suppose  $1 \leq m \leq n \leq p$ with $m+n \geq p+1$.
   For $m+n-p \leq r \leq m-1$,
   $\ell(\rho(\omega_r))=m+n-2r-1$.
   Furthermore, $$\V = \bigoplus_{r=m+n-p}^{m-1} V_{m+n-2r-1} \bigoplus (m+n-p)V_p$$ where
  $$V_{m+n-2r-1} = \Span_{\field_p}\{\Delta^j(\rho(\alpha_{r})) \mid 0 \leq j \leq m+n-2r-2\}$$ for
   $m+n-p \leq r \leq m-1$.
   In particular $$\rho(M_{m+n-2r-1}(\Z)) = V_{m+n-2r-1}$$ for $m+n-p \leq r \leq m-1$
   where $M_{m+n-2r-1}$ and $V_{m+n-2r-1}$ are indecomposable summands of $\M$ and $\V$
   generated by $\alpha_r$ and $\rho(\alpha_r)$ respectively.
 \end{theorem}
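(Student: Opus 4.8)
The plan is to pin down the socle filtration of $\V=\rho(\MZ)$ completely and then read off the decomposition. Since $V_p$ is free, the case $n=p$ gives $\V\cong mV_p$ with the range $m+n-p\le r\le m-1$ empty, so assume $2\le m\le n\le p-1$ and write $d_r:=m+n-2r-1$, so that $1\le d_r\le p-2$ for every $r$ in that range. Record two preliminary facts. From Proposition~\ref{decomp}, $\V\cong\bigoplus_{i=m+n-p}^{m-1}V_{d_i}\oplus(m+n-p)V_p$, which gives, for every $d\ge0$,
$$\dim\bigl(\Delta^d(\V)\cap\operatorname{soc}(\V)\bigr)=\#\{\,i:m+n-p\le i\le m-1,\ d_i>d\,\}+(m+n-p)\,[\,d<p\,],$$
so in particular $\operatorname{soc}(\V)$ is $m$-dimensional; and, as already established, $\{\rho(\omega_0),\dots,\rho(\omega_{m-1})\}$ is a basis of $\operatorname{soc}(\V)$ (each $\rho(\omega_r)$ is killed by $\Delta$, is nonzero since the coefficients of $\gr(\omega_r)$ are $\pm1$, and $\Deg(\rho(\omega_r))=m+n-r-2$, so the $m$ of them are independent).

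The crux is the lower bound $\ell(\rho(\omega_r))\ge d_r$ for $m+n-p\le r\le m-1$, i.e.\ $\rho(\omega_r)\in\Delta^{d_r-1}(\V)$. By Theorem~\ref{integral decomposition}, $\alpha_r$ generates the summand $M_{d_r}$ of $\M$, with $\Deg(\alpha_r)=r$ and $\Delta^{d_r-1}(\alpha_r)=\omega_r$. Clearing denominators we may take $\alpha_r\in\MZ$ primitive, so $\rho(\alpha_r)\ne0$; then $\Delta^{d_r-1}(\alpha_r)=a_r\omega_r$ for some nonzero $a_r\in\Z$ (as $\omega_r$ is itself primitive), and hence $\Delta^{d_r-1}(\rho(\alpha_r))=\overline{a_r}\,\rho(\omega_r)$. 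If $p\nmid a_r$ this proves the lower bound and simultaneously shows $\{\Delta^j(\rho(\alpha_r)):0\le j\le d_r-1\}$ is linearly independent, so $U_r:=\Span_{\Fp}\{\Delta^j(\rho(\alpha_r)):0\le j\le d_r-1\}\cong V_{d_r}$ with socle $\Fp\rho(\omega_r)$. I expect the genuine difficulty to be exactly the claim $p\nmid a_r$. One must follow the inductive construction of $\alpha_r$ in Theorem~\ref{integral decomposition}: because the correction coefficients $a_{ij}$ there all vanished, $\alpha_r$ is governed only by the single socle vector $\omega_r$ of length $d_r$ together with the lattice-level surjectivity of $\Delta$ on the top filtered pieces $\FF_q(\MZ)$, and the denominators this introduces are products of binomial coefficients all of whose numerators are at most $m+n-r-2\le p-2$. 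Each such binomial coefficient is a unit mod $p$, whence $p\nmid a_r$. This is precisely where the hypothesis $m+n\ge p+1$ enters, via $d_r\le p-2$ for $r$ in the stated range; for the excluded indices $r<m+n-p$ one can genuinely have $p\mid a_r$, in accordance with the already-known fact $\ell(\rho(\omega_r))=p$ there.

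Granting the lower bound, the matching upper bound $\ell(\rho(\omega_r))\le d_r$ is formal. Put $S:=\Delta^{d_r}(\V)\cap\operatorname{soc}(\V)$; by the displayed dimension formula $\dim S=r$. For $0\le i\le r-1$ we have $\ell(\rho(\omega_i))>d_r$: if $i<m+n-p$ this reads $p>d_r$, and if $m+n-p\le i<r$ it reads $\ell(\rho(\omega_i))\ge d_i>d_r$ by the lower bound. Hence the $r$ independent vectors $\rho(\omega_0),\dots,\rho(\omega_{r-1})$ lie in $S$ and span it; since $\rho(\omega_r)$ is not in their span, $\rho(\omega_r)\notin S$, i.e.\ $\ell(\rho(\omega_r))\le d_r$. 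Therefore $\ell(\rho(\omega_r))=d_r$ and $U_r\cong V_{d_r}$ as above.

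Finally the decomposition. The socles $\Fp\rho(\omega_r)$ of the $U_r$ ($m+n-p\le r\le m-1$) are independent, so $N:=\sum_r U_r$ is direct, of dimension $\sum_r d_r=(p-n)(p-m)$. By Proposition~\ref{decomp} there is a direct summand $B$ of $\V$ with $B\cong(m+n-p)V_p$; since its $m+n-p$ copies of $V_p$ are all the free indecomposable summands of $\V$, we get $\operatorname{soc}(B)=\Delta^{p-1}(\V)=\Span_{\Fp}\{\rho(\omega_r):0\le r<m+n-p\}$ (the last equality because $\ell(\rho(\omega_r))=p$ there and $\dim\Delta^{p-1}(\V)=m+n-p$). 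Then $\operatorname{soc}(N)\cap\operatorname{soc}(B)=0$ — disjoint parts of the basis $\{\rho(\omega_r)\}$ — hence $N\cap B=0$, and as $\dim N+\dim B=(p-n)(p-m)+p(m+n-p)=mn=\dim\V$ we conclude $\V=N\oplus B$, which is the asserted decomposition with $V_{d_r}=U_r$. The equality $\rho(M_{d_r}(\Z))=V_{d_r}$ for $M_{d_r}(\Z):=M_{d_r}\cap\MZ$ then follows from Lemma~\ref{lattice result}, which gives $\dim_{\Fp}\rho(M_{d_r}(\Z))=\rank M_{d_r}(\Z)=d_r$, while $\rho(M_{d_r}(\Z))\supseteq U_r$ and $U_r$ also has dimension $d_r$.
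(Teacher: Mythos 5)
Your skeleton is sound, and everything outside the crux checks out (the socle--dimension count giving the upper bound $\ell(\rho(\omega_r))\le m+n-2r-1$, the assembly of the direct sum from independent socles, and the final use of Lemma~\ref{lattice result} to get $\rho(M_{m+n-2r-1}(\Z))=V_{m+n-2r-1}$ are all correct and close in spirit to the paper). But the step you yourself flag as ``the genuine difficulty,'' namely $p\nmid a_r$, is a real gap, and it is not a peripheral one: it is equivalent to the inequality $\ell(\rho(\omega_r))\ge m+n-2r-1$ that the theorem asserts. The construction of $\alpha_r$ in Theorem~\ref{integral decomposition} is carried out over $\Q$ with non-canonical choices (a socle element $\omega$ of prescribed length, an $\alpha$ with $\Delta^{m+n-2r-4}(\alpha)=\omega$, and a final rescaling by $c_{r+1}^{-1}$), and nothing in that proof controls which primes divide the integer $a_r$ obtained after clearing denominators. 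Your assertion that ``the denominators this introduces are products of binomial coefficients all of whose numerators are at most $m+n-r-2$'' is nowhere established and, as stated, is no easier than the theorem itself. The divisibility really is delicate: for the excluded index $r=0$ one has $a_0=\binom{m+n-2}{m-1}$ (since $\Delta^{m+n-2}(1)=\binom{m+n-2}{m-1}s^{m-1}t^{n-1}$), which is divisible by $p$ exactly in the situations where $\ell(\rho(\omega_0))$ drops to $p$ (cf.\ Example~\ref{eg2} with $m=3$, $n=4$, $p=5$, where $a_0=10$), so any correct argument must make quantitative use of $r\ge m+n-p$, not merely gesture at it.

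The paper closes precisely this step by a different mechanism that avoids all denominator bookkeeping: it computes $\dim_{\Fp}\KV{r}$, the kernel of $\Delta^{m+n-2r-2}$ on $\V$, from Proposition~\ref{decomp}, observes that it equals $\dim_{\Q}\KM{r}$ (read off from the Clebsch--Gordan decomposition of $\M$), and then applies Lemma~\ref{lattice result} to the saturated lattice $\KM{r}(\Z)=\KM{r}\cap\MZ$ to conclude $\rho(\KM{r}(\Z))=\KV{r}$; equivalently, by rank--nullity, $\Delta^{m+n-2r-2}(\V)=\rho\bigl(\Delta^{m+n-2r-2}(\M)\cap\MZ\bigr)$, which contains $\rho(\omega_r)\ne 0$, giving the lower bound directly. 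If you insist on your route, the paper's remark after the theorem indicates what would actually be needed: the matrix of multiplication by $(s+t)^{m+n-2r-2}$ from degree $r$ to degree $m+n-r-2$ is Srinivasan's Pascal matrix, whose determinant is a unit mod $p$ precisely when $m+n-r-2<p$; but that is a separate argument you would have to set up and prove, not a consequence of the inductive construction of $\alpha_r$. As written, your proof of the central inequality is a plausibility argument rather than a proof.
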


 \begin{proof}
    It suffices to show that $\Delta^{m+n-2r-2}(\rho(\alpha_r)) \neq 0$ for all $r \geq m+n-p$.  Fix $r \geq m+n-p$.
       Let  $\KM{r}$ denote the kernel of $\Delta^{m+n-2r-2}:M_m\otimes M_n \to M_m\otimes M_n$.
   Then $\KM{r}$ is a $\Q$ vector space and we write $\KM{r}(\Z) := \KM{r} \cap (M_m(\Z) \otimes M_n(\Z))$.
   Observe that the set
   \begin{align*}
       \{ \Delta^j&(\alpha_i) \mid 0 \leq i \leq r, 2r-2i+1 \leq j \leq m+n-2i-2\}\\
         & \sqcup \{ \Delta^j(\alpha_i) \mid r+1 \leq i \leq m-1, 0 \leq j \leq m+n-2i-2\}
  \end{align*}
    is a basis for $\KM{r}$.  Either from this or by Equation~(\ref{explicit CG for M}) we have
    \begin{align*}
     \dim_{\Q} \KM{r} %&= %& \sum_{i=0}^r (m+n-2i-2)-(2r-2i+1)+1 + \sum_{i=r+1}^{m-1} (m+n-2i-1)\\
                                 &= \sum_{i=0}^r (m+n-2r-2) + \sum_{i=r+1}^{m-1} (m+n-2i-1)\\
                                 &= (r+1)(m+n-2r-2) + \sum_{i=r+1}^{m-1} (m+n-2i-1).
   \end{align*}
     Let $\KV{r}$ denote the kernel of $\Delta^{m+n-2r-2}:V_m\otimes V_n \to V_m\otimes V_n$.
   %Then $\KV{r}$ is a $\F_p$ vector space.
     By Proposition~\ref{decomp}, we see that
     $\dim_{\F_p} \KV{r} =  (r+1)(m+n-2r-2) + \sum_{i=r+1}^{m-1} (m+n-2i-1) = \dim_{\Q} \KM{r}$.
     Therefore, applying Lemma~\ref{lattice result}, we see that $\rho(\KM{r}(\Z)) = \KV{r}$.  
     Since $\alpha_r \notin \KM{r}$, this implies that
     $\rho(\alpha_r) \notin \KM{r}$, i.e., $\Delta^{m+n-2r-2}(\rho(\alpha_r)) \ne 0$ as required.
      Thus
     $\Delta^{m+n-2r-2}(\rho(\alpha_r))$ is a non-zero multiple of $\rho(\omega_r)$ and so
     $\ell(\rho(\omega_{r})) \geq m+n-2r-1$.
     Since this is true for all $r=m+n-p, m+n-p+1,\dots,m-1$, comparing with Proposition~\ref{decomp},
     shows that $\ell(\rho(\omega_{r})) = m+n-2r-1$ as required.
 \end{proof}

\begin{remark}\label{compatible}
Since $\rho(\Delta^{m+n-2r-2}(\alpha_r)) \neq 0$ we may replace $\alpha_r$ by an integer multiple
  of itself in order to arrange that $\Delta^{m+n-2r-2}(\rho(\alpha_r)) = \rho(\omega_r)$
  for $r=m+n-p,m+n-p+2,\dots,m-1$.
\end{remark}

\begin{remark}
  One component of our proofs of Theorems~\ref{integral decomposition} and \ref{modular decomposition} involves showing that the multiplication maps
  $$(s+t)^{m+n-2r-2}\cdot  : (M_m \otimes M_n)_r \to (M_m \otimes M_n)_{m+n-r-2}$$ for $r=0,1,\dots,m-1$ and the maps
  $$(s+t)^{m+n-2r-2}\cdot  : (V_m \otimes V_n)_r \to (V_m \otimes V_n)_{m+n-r-2}$$ for $r=m+n-p,m+n-p+1,\dots,m-1$
  are surjective.    Another way to show this step is to consider the matrix associated to these maps with respect to the
  basis of monomials in $s$ and $t$.  This matrix is given by
 $$D_{r+1}(m+n-2r-2,m-r-1):=\begin{pmatrix}  \binomial {m+n-2r-2} {m-r-1+i-j}
  \end{pmatrix}_{1\leq i \leq r+1 \atop 1\leq j \leq  r+1}\ .
  $$
  \name{Srinivasan} \cite{Sr} shows that this matrix is row equivalent to his {\em Pascal matrix}
 $$P_{r+1,r+1}(m+n-2r-2,m-r-1):=\begin{pmatrix} \binomial {m+n-2r-3+i} {m-r-2+j}
  \end{pmatrix}_{1\leq i \leq r+1 \atop 1\leq j \leq  r+1}\ .
  $$
  Moreover this row equivalence may be obtained using only determinant preserving row operations.
   \name{Srinvasan} shows that this later matrix has determinant
   $$ \frac{1!\, 2! \cdots r!}{(m-r)^{r} (m-r+1)^{r-1} \cdots (m-1)}
   \prod_{c=0}^r \binomial{m+n-2r-2+c}{m-r-1}\ . $$
 This determinant is always non-zero and is non-zero modulo $p$ if and only if $m+n-r-2 < p$.
\end{remark}

\begin{theorem}
   Suppose $1 \leq m_i \leq p$ for $i=1,2,\dots,r$.
   Write $V_{n_1} \otimes V_{n_2} \otimes \dots \otimes V_{n_r} \cong \oplus_{i=1}^p a_i V_i$.
   Then $M_{n_1} \otimes M_{n_2} \otimes \dots \otimes M_{n_{r}}$ contains
   a summand $N$ with $N\cong \oplus_{i=1}^{p-1} a_i M_i$ such that $\rho(N(\Z))=W$
   where $W$  is a summand of  of $\otimes_{k=1}^{n-1} V_{n_k}$ with
   $W \cong \oplus_{i=1}^{p-1} a_i V_i$.   More explicitly, we may decompose $N$ and $W$
   into indecomposables summands $N = \oplus_{\alpha \in \Gamma} N_\alpha$ and
   $W = \oplus_{\alpha \in \Gamma} W_\alpha$ with $\dim_\Q M_\alpha = \dim_{\fp} W_\alpha$ and
   $\rho(M_\alpha(\Z))=W_\alpha$ for all $\alpha \in \Gamma$.
\end{theorem}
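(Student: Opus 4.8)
The plan is to induct on the number of factors $r$. The case $r=1$ is immediate, with $N=M_{n_1}$, $W=V_{n_1}$ and $\rho(M_{n_1}(\Z))=V_{n_1}$. The case $r=2$ is the content of Theorem~\ref{modular decomposition} together with its analogue in the complementary range $m+n\le p$ (writing the two factors as $V_m,V_n$ with $m\le n\le p$): in that range Clebsch--Gordan~(\ref{explicit CG for M}) and the first case of Proposition~\ref{decomp} endow $\M$ and $\V$ with the \emph{same} list of indecomposable-summand dimensions, all at most $p-1$, so $\dim_\Q\KM{i}=\dim_{\fp}\KV{i}$ for every $i$ and the non-vanishing argument of Theorem~\ref{modular decomposition} --- i.e.\ $\Delta^{\,m+n-2i-2}\rho(\alpha_i)\ne0$, via Lemma~\ref{lattice result} --- goes through verbatim. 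Thus, for $r=2$: among the summands in a fixed decomposition $\M=\bigoplus_j M_j$ there is a subset $\{M_j\}_{j\in J}$, with $N:=\bigoplus_{j\in J}M_j\cong\bigoplus_{i=1}^{p-1}a_iM_i$, such that the submodules $\rho(M_j(\Z))\subseteq\V$ ($j\in J$) are precisely the non-projective indecomposable summands of $\V$ and $\rho(M_j(\Z))\cong V_{\dim M_j}$ for each. (Note that in the regime $m+n\ge p+2$ the set $J$ is \emph{not} all indices $\le p-1$: some summands $M_j$ with $j\le p-1$ reduce to submodules of the free part of $\V$ rather than to a $V_j$; only the $M_j$ indexed by the non-projective dimensions of $\V$ are retained.)

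For the inductive step I would peel off the last factor. Set $T:=V_{n_1}\otimes\cdots\otimes V_{n_{r-1}}$ and fix a decomposition $T\cong\bigoplus_{j\le p-1}c_jV_j\oplus c_pV_p$; since $V_p\otimes V_{n_r}$ is projective, the multiplicity $a_i$ of $V_i$ ($i\le p-1$) in the full product $V_{n_1}\otimes\cdots\otimes V_{n_r}$ equals $\sum_{j\le p-1}c_j\,b_{ij}$, where $b_{ij}$ is the multiplicity of $V_i$ in $V_j\otimes V_{n_r}$. By the inductive hypothesis $M_{n_1}\otimes\cdots\otimes M_{n_{r-1}}$ has a summand $N'=\bigoplus_\beta N'_\beta$ into indecomposables with $N'_\beta\cong M_{j(\beta)}$, $N'\cong\bigoplus_{j\le p-1}c_jM_j$, matching a decomposition $W'=\bigoplus_\beta W'_\beta$ of a summand $W'\subseteq T$ with $W'_\beta\cong V_{j(\beta)}$ and $\rho(N'_\beta(\Z))=W'_\beta$, where $W'$ is the sum of the non-projective summands of $T$. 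Then $N'\otimes M_{n_r}$ is a summand of $M_{n_1}\otimes\cdots\otimes M_{n_r}$, and each $N'_\beta\otimes M_{n_r}\cong M_{j(\beta)}\otimes M_{n_r}$ with $j(\beta)\le p-1\le p$ and $n_r\le p$. Applying the case $r=2$ inside each of these picks out a summand $P_\beta\cong\bigoplus_i b_{i,j(\beta)}M_i$; I then put $N:=\bigoplus_\beta P_\beta$, a summand of $M_{n_1}\otimes\cdots\otimes M_{n_r}$ with $N\cong\bigoplus_{i\le p-1}\big(\sum_\beta b_{i,j(\beta)}\big)M_i=\bigoplus_{i\le p-1}a_iM_i$, and $W:=\rho(N(\Z))$.

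It remains to identify $W$ and to see it is an honest summand. The enabling fact is that reduction mod $p$ of a tensor product of free lattices is the tensor product of the reductions, so $\rho\big(N'_\beta(\Z)\otimes_\Z M_{n_r}(\Z)\big)=W'_\beta\otimes_{\fp}V_{n_r}$, a $C_p$-submodule of the modular product isomorphic to $V_{j(\beta)}\otimes V_{n_r}$, and $\rho$ intertwines $\Delta$ and the $\Z$-linear inclusions. Hence the $r=2$ conclusion, transported through this isomorphism, really does compute $\rho(P_\beta(\Z))$: it is the sum of the non-projective summands of $W'_\beta\otimes V_{n_r}$, so $\rho(P_\beta(\Z))\cong\bigoplus_i b_{i,j(\beta)}V_i$ with $\dim_{\fp}\rho(P_\beta(\Z))=\dim_\Q P_\beta$ by Lemma~\ref{lattice result}. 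Since $W'\otimes V_{n_r}=\bigoplus_\beta(W'_\beta\otimes V_{n_r})$ is a genuine direct sum and is a summand of $T\otimes V_{n_r}=V_{n_1}\otimes\cdots\otimes V_{n_r}$, the submodules $\rho(P_\beta(\Z))$ also sum directly, so $W=\bigoplus_\beta\rho(P_\beta(\Z))=\bigoplus_{i\le p-1}a_iV_i$ is a summand of the full product; and $\dim_{\fp}W=\sum_i a_ii=\dim_\Q N=\dim_{\fp}\rho(N(\Z))$ (Lemma~\ref{lattice result} again) confirms $\rho(N(\Z))=W$. Refining $N$ and $W$ into indecomposables through the $r=2$ step yields the final explicit assertion. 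I expect the principal difficulty to be exactly this bookkeeping: keeping, through the induction, $N$ an honest direct summand of the integral tensor product and $\rho(N(\Z))$ an honest direct sum of the predicted $V_i$'s --- in particular retaining only the ``good'' summands $M_j$ at each stage rather than all those with $j\le p-1$ --- and the tensor-compatibility of reduction mod $p$ together with the dimension count of Lemma~\ref{lattice result} is what makes this go.
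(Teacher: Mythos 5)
Your proposal is correct and follows essentially the same route as the paper's own proof: induction on the number of tensor factors, peeling off the last factor, discarding the projective (free) summands at each stage, and applying the two-factor result (Theorem~\ref{modular decomposition} with Remark~\ref{compatible}) to each retained non-projective summand tensored with the last factor. Your added remarks on the complementary range $m+n\leq p$ and on the compatibility of reduction mod $p$ with tensor products simply make explicit points the paper leaves implicit.
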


\begin{proof}
  The proof is by induction on $r$.  The result is trivial for $r=1$.

  Decompose $\otimes_{k=1}^{r-1} V_{n_k}$ into a direct sum of indecomposables $C_p$-modules:
   $$V_{n_1} \otimes V_{n_2} \otimes \dots \otimes V_{n_{r-1}} =  \oplus_{\alpha \in A} W_\alpha.$$
Define $A' := \{ \alpha \in A \mid \dim W_\alpha < p\}$ and
$A'' := A \setminus A' = \{ \alpha \in A \mid \dim W_\alpha = p\}$.
  Define $W' :=  \oplus_{\alpha\in A'} W_\alpha$ and $W'' :=  \oplus_{\alpha\in A''} W_\alpha$ so that
  $\otimes_{k=1}^{r-1} V_{n_k} = W' \oplus W''$.

 By induction
  $M_{n_1} \otimes M_{n_2} \otimes \dots \otimes M_{n_{r-1}}$ contains a summand $U'$ with
   $U' = \oplus_{\alpha \in A'} N_\alpha$ where each $N_\alpha \cong M_{\theta(\alpha)}$
   with $\theta(\alpha) = \dim_{\Q} N_\alpha = \dim_{\field_p} W_\alpha < p$ and such that
   $\rho(N_\alpha(\Z)) = W_\alpha$ for all $\alpha \in A'$.  Thus $\rho(U'(\Z))=W'$.

  Decompose  $W_\alpha \otimes V_{n_r} = \oplus_{\beta \in B_\alpha} W_{\alpha,\beta}$ and
  define $B'_\alpha := \{\beta \in B_\alpha \mid \dim W_{\alpha,\beta} < p\}$ and
   $B''_\alpha := B_\alpha \setminus B'_\alpha$.
 By Theorem~\ref{modular decomposition} and Remark~\ref{compatible},
  $M_\alpha \otimes M_{n_r}$ contains a summand
 $\oplus_{\beta\in B'_\alpha} N_{\alpha,\beta}$
 with $N_{\alpha,\beta} \cong M_{\theta(\beta)}$ where $\theta(\beta) = \dim_{\Q} N_{\alpha,\beta}
  = \dim_{\field_p} W_{\alpha,\beta} < p$ and such that
 $\rho(N_{\alpha,\beta}(\Z))=W_{\alpha,\beta}$  for all $\beta \in B'_\alpha$ and all $\alpha \in A'$.

   Thus we have
  \begin{align*}
  \bigotimes_{k=1}^r V_{n_k} &\cong (W' \otimes V_{n_r}) \oplus (W'' \otimes V_{n_r})\\
           &= (\oplus_{\alpha \in A'} W_\alpha \otimes V_{n_r}) \oplus(W'' \otimes V_{n_r})\\
        & = (\oplus_{\alpha \in A'} \oplus_{\beta \in B_\alpha}  W_{\alpha,\beta}) \oplus (W'' \otimes V_{n_r})\\
                  &= (\oplus_{\alpha \in A'} \oplus_{\beta \in B'_\alpha}  W_{\alpha,\beta})
                  \oplus  (\oplus_{\alpha \in A'} \oplus_{\beta \in B''_\alpha}  W_{\alpha,\beta}) \oplus (W'' \otimes V_{n_r})
  \end{align*}
   where $ (\oplus_{\alpha \in A'} \oplus_{\beta \in B''_\alpha}  W_{\alpha,\beta}) \oplus (W'' \otimes V_{n_r})$ is a free
   $C_p$-module and $W \cong \oplus_{\alpha \in A'} \oplus_{\beta \in B'_\alpha}  W_{\alpha,\beta}$.

   Taking $N$ to be the summand $N := \oplus_{\alpha \in A'} \oplus_{\beta \in B'_\alpha}  N_{\alpha,\beta}$
   of $\otimes_{k=1}^r M_{n_k}$ we have $\rho(N_{\alpha,\beta}(\Z))=W_{\alpha,\beta}$ for
   all $\alpha \in A'$ and all $\beta \in B'_\alpha$  and $\rho(N(\Z))=W$ as required.
  \end{proof}

\begin{corollary}\label{key}
  Suppose $1 \leq n_1,n_2,\dots,n_k \leq p$.
  Every invariant $f \in (\otimes_{k=1}^r V_{n_k})^{C_p}$ may be expressed as a sum $f=f_0 + f_1$ where
  $f_0$ is integral (i.e., $f_0 = \rho (F_0)$ for some $F_0 \in  (\otimes_{k=1}^r M_{n_k}(\Z))^{\Z})$ and $f_1$ is a transfer.
\end{corollary}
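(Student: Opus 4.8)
The plan is to separate the invariants of $U:=\bigotimes_{k=1}^{r}V_{n_k}$ into a projective part, which accounts for the transfers, and a complementary part, which will turn out to be entirely integral. The substantive work has already been carried out in the theorem stated just above, so what remains is a short deduction. First I would apply that theorem to write $U=W\oplus P$ as $C_p$-modules, where $P\cong a_p\,V_p$ is free and $W\cong\bigoplus_{i=1}^{p-1}a_iV_i$, and to fix a compatible decomposition $W=\bigoplus_{\alpha\in\Gamma}W_\alpha$ into indecomposables $W_\alpha\cong V_{d_\alpha}$ with $d_\alpha<p$, together with a summand $N=\bigoplus_{\alpha\in\Gamma}N_\alpha$ of $\bigotimes_{k=1}^{r}M_{n_k}$ with $N_\alpha\cong M_{d_\alpha}$ and $\rho\bigl(N_\alpha(\Z)\bigr)=W_\alpha$. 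Since passing to $C_p$-invariants commutes with finite direct sums, $U^{C_p}=W^{C_p}\oplus P^{C_p}$, so it suffices to prove: (i) every element of $P^{C_p}$ is a transfer, and (ii) every element of $W^{C_p}$ is integral.

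For (i), I would use that over $\field_p$ the transfer operator satisfies $\tr=\sum_{\tau\in C_p}\tau=\Delta^{p-1}$ (from $(1+\Delta)^p=1+\Delta^p$ and $\Delta^p=0$ in the group algebra). Since $V_p$ is a single Jordan block of size $p$, the image of $\Delta^{p-1}$ on $P\cong a_p\,V_p$ is exactly the socle $P^{C_p}$; hence $\tr(U)\supseteq\tr(P)=P^{C_p}$, and every element of $P^{C_p}$ is a transfer.

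For (ii), I would argue one summand at a time. Fix $\alpha$. The kernel of $\Delta$ on the rank-$d_\alpha$ lattice $N_\alpha(\Z)$ is a rank-one sublattice, since the socle of $N_\alpha\cong M_{d_\alpha}$ is one-dimensional; let $\omega_\alpha$ generate it, so $\omega_\alpha\in\bigl(\bigotimes_{k=1}^{r}M_{n_k}(\Z)\bigr)^{\Z}$. Because $N_\alpha$ is a $\Q$-subspace of the ambient vector space, $\omega_\alpha$ is not divisible by $p$ in the ambient lattice, so $\rho(\omega_\alpha)\neq 0$; and since $\Delta\rho(\omega_\alpha)=\rho(\Delta\omega_\alpha)=0$ while $W_\alpha\cong V_{d_\alpha}$ is indecomposable (here $d_\alpha<p$ is used) with one-dimensional socle $W_\alpha^{C_p}$, a dimension count gives $W_\alpha^{C_p}=\field_p\,\rho(\omega_\alpha)=\rho(\Z\,\omega_\alpha)$. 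Now, given $f\in W^{C_p}$, decompose $f=\sum_{\alpha}f_\alpha$ with $f_\alpha\in W_\alpha^{C_p}$, pick $c_\alpha\in\Z$ with $\rho(c_\alpha\omega_\alpha)=f_\alpha$, and set $F_0:=\sum_{\alpha}c_\alpha\omega_\alpha$. Then $F_0\in\bigl(\bigotimes_{k=1}^{r}M_{n_k}(\Z)\bigr)^{\Z}$ and $\rho(F_0)=f$, so $f$ is integral.

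Combining (i) and (ii), an arbitrary $f\in U^{C_p}$ decomposes as $f=f_0+f_1$ with $f_0\in W^{C_p}$ integral and $f_1\in P^{C_p}$ a transfer, which is the assertion. The only point needing care is the dimension count in (ii): one must know that the non-projective summand $W_\alpha$ is still indecomposable with a one-dimensional socle, so that $\field_p\,\rho(\omega_\alpha)$ cannot be a proper subspace of $W_\alpha^{C_p}$. This is precisely the clause $W_\alpha\cong V_{d_\alpha}$, $d_\alpha<p$, $\rho(N_\alpha(\Z))=W_\alpha$ delivered by the preceding theorem; all the genuine difficulty lives there, and Corollary~\ref{key} itself is essentially bookkeeping with the splitting $U=W\oplus P$.
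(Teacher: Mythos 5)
Your proof is correct and matches the paper's intent: the paper states Corollary~\ref{key} without proof as an immediate consequence of the preceding theorem, and your deduction — splitting $\otimes_k V_{n_k}$ into the non-projective summand $W$ (whose socle elements are reductions of $\Delta$-killed lattice vectors in the matching summand $N(\Z)$, hence integral) and the free summand $P$ (whose socle is the image of $\tr=\Delta^{p-1}$, hence transfers) — is exactly the intended bookkeeping, with the genuine content residing in the theorem as you note.
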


We now apply this to symmetric algebras.
\begin{theorem}\label{main}
  Let $1 < n_1, n_2, \dots n_r \leq p$ and $0 \leq d_1, d_2, \dots, d_r \leq p-1$.
  Every invariant $f \in \field_p[V_{n_1} \oplus V_{n_2} \oplus \dots \oplus V_{n_r}]_{(d_1,d_2,\dots,d_r)}^{C_p}$
  may be written as $f' + f''$ where $f'$ is integral and $f''$ is a transfer,
  i.e., $f' = \rho(F')$ for some $F' \in \Z[M_{n_1} \oplus M_{n_2} \oplus \dots \oplus M_{n_r}]^{\Z}_{(d_1,d_2,\dots,d_r)}$
  and $f'' = \tr^{C_p}(F'')$ for some $F'' \in \fp[V_{n_1} \oplus V_{n_2} \oplus \dots \oplus V_{n_r}]_{(d_1,d_2,\dots,d_r)}$.
\end{theorem}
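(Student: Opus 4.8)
The plan is to reduce the statement to Corollary~\ref{key} by realising the homogeneous component $\field_p[V]_{(d_1,\dots,d_r)}$ as a $C_p$-module direct summand of a tensor power; this splitting exists precisely because each $d_i\le p-1$.

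Write $V := V_{n_1}\oplus\dots\oplus V_{n_r}$ and set $T := \bigotimes_{i=1}^r (V_{n_i}^*)^{\otimes d_i}$, a tensor product of $d_1+\dots+d_r$ indecomposable $C_p$-modules each of dimension at most $p$ (recall $V_{n_i}^*\cong V_{n_i}$ and $M_{n_i}^*\cong M_{n_i}$). Let $q_i : (V_{n_i}^*)^{\otimes d_i} \to \Sym^{d_i}(V_{n_i}^*) = \field_p[V_{n_i}]_{d_i}$ be the multiplication map, and---since $d_i!$ is invertible in $\field_p$---let $s_i : \Sym^{d_i}(V_{n_i}^*) \to (V_{n_i}^*)^{\otimes d_i}$ be the symmetrisation $s_i(v_1\cdots v_{d_i}) = \tfrac{1}{d_i!}\sum_{\pi\in S_{d_i}} v_{\pi(1)}\otimes\cdots\otimes v_{\pi(d_i)}$. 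Both $q_i$ and $s_i$ are $C_p$-equivariant, since $C_p$ acts diagonally on tensor powers, and $q_i s_i = \operatorname{id}$. Put $q := \bigotimes_i q_i : T \to \field_p[V]_{(d_1,\dots,d_r)}$ and $s := \bigotimes_i s_i$; then $q$ and $s$ are $C_p$-equivariant with $q s = \operatorname{id}$, so $\field_p[V]_{(d_1,\dots,d_r)}$ is a $C_p$-summand of $T$. Over $\Z$ we have the analogous $\Z$-equivariant multiplication map $\widetilde q : \bigotimes_{i=1}^r (M_{n_i}^*(\Z))^{\otimes d_i} \to \Z[M_{n_1}\oplus\dots\oplus M_{n_r}]_{(d_1,\dots,d_r)}$, and it is compatible with reduction modulo $p$: $q\circ\rho = \rho\circ\widetilde q$, both sides being ``reduce mod $p$, then multiply.''

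Now take $f \in \field_p[V]^{C_p}_{(d_1,\dots,d_r)}$. Since $s$ is $C_p$-equivariant, $s(f)\in T^{C_p}$, so by Corollary~\ref{key} applied to $T$ we may write $s(f) = \rho(F_0) + \tr^{C_p}(G)$ with $F_0 \in \bigl(\bigotimes_{i=1}^r (M_{n_i}^*(\Z))^{\otimes d_i}\bigr)^{\Z}$ and $G\in T$. Applying $q$, and using $q s = \operatorname{id}$, $q\circ\rho = \rho\circ\widetilde q$, and $q\circ\tr^{C_p} = \tr^{C_p}\circ q$ (equivariance of $q$), we obtain
\[
 f \;=\; q\bigl(s(f)\bigr) \;=\; \rho\bigl(\widetilde q(F_0)\bigr) \;+\; \tr^{C_p}\bigl(q(G)\bigr).
\]
Then $F' := \widetilde q(F_0)$ lies in $\Z[M_{n_1}\oplus\dots\oplus M_{n_r}]_{(d_1,\dots,d_r)}$ and is $\Z$-invariant (as $F_0$ is and $\widetilde q$ is equivariant), while $F'' := q(G) \in \field_p[V]_{(d_1,\dots,d_r)}$. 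Thus $f = \rho(F') + \tr^{C_p}(F'')$ is the desired decomposition, with $f' := \rho(F')$ integral and $f'' := \tr^{C_p}(F'')$ a transfer.

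All the real content of Theorem~\ref{main} sits in Corollary~\ref{key} and the decomposition theorems preceding it; the step above is purely formal. The one essential use of the hypothesis $d_i\le p-1$ is to produce the section $s_i$ over $\field_p$, i.e.\ to split $\field_p[V]_{(d_1,\dots,d_r)}$ off $T$ as a $C_p$-module. The only things needing verification are the $C_p$-equivariance of $q$ and $s$ and the identity $q\circ\rho = \rho\circ\widetilde q$, all of which are immediate once one observes that $C_p$ acts diagonally on tensor powers and that $q$, $\widetilde q$, $\rho$ are built from the same elementary operations. So there is no serious obstacle here; the bookkeeping between the $V$-side and the $M$-side is routine given the self-dualities and Corollary~\ref{key}.
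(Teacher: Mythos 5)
Your proof is correct and is essentially the paper's own argument: both split $\field_p[V]_{(d_1,\dots,d_r)}$ off the tensor power $\bigotimes_i (V^*_{n_i})^{\otimes d_i}$ as a $C_p$-summand (possible exactly because $d_i\le p-1$ makes $d_i!$ invertible), apply Corollary~\ref{key} there, and push the resulting decomposition back by an equivariant projection that commutes with $\rho$ and with the transfer --- the paper merely phrases that projection as the Reynolds operator of the Young subgroup $\Sigma_{d_1}\times\cdots\times\Sigma_{d_r}$ acting on the tensor power, rather than as your multiplication map $q$ with symmetrization section $s$. A minor advantage of your formulation is that $F'=\widetilde q(F_0)$ visibly lies in $\Z[M_{n_1}\oplus\cdots\oplus M_{n_r}]^{\Z}_{(d_1,\dots,d_r)}$ as the statement demands, whereas the paper's $\Pi(F_0)$ is a priori only in the corresponding $\Q$-algebra.
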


\begin{proof}
  Let $d < p$.
  The symmetric group on $d$ letters, $\Sigma_d$, acts on $\otimes^d V^*_n$ by permuting factors.
Furthermore $\Sym^d V^*_n =  (\otimes^d V^*_n)^{\Sigma_d}$.  Since $d < p$, the group $\Sigma_d$ is non-modular
and therefore $\Sym^d V^*_n$ has a $\Sigma_d$-stable complement:
$\otimes^d V^*_n = \Sym^d V^*_n \oplus U$.
Since the actions of $C_p$ (in fact all of $\GL(V^*_n)$) and $\Sigma_d$ commute, the complement $U$ is also a $C_p$-module
(in fact a $\GL(V^*_n)$-module).
Therefore $\Sym^d V^*_n$ is a summand of $\otimes^d V^*_n$ as a $C_p$-module.

  Similarly $\Sym^d M^*_n$ is a summand of the $\Z$-module $\otimes^d M^*_n$.
  The projection of $\otimes^d V^*_n$ onto $\Sym^d V^*_n$ is given by the
Reynolds operator $\Pi_{\Sigma_d} = \frac{1}{d!}\sum_{\tau \in \Sigma_d} \tau$.
The same formula give the projection of $\otimes^d M^*_n$ onto $\Sym^d M^*_n$.

   In the same manner we see that
  $\field_p[V_{n_1} \oplus V_{n_2} \oplus \dots \oplus V_{n_r}]_{(d_1,d_2,\dots,d_r)}
= \Sym^{d_1} V^*_{n_1} \otimes \Sym^{d_2} V^*_{n_2}  \otimes \dots \otimes \Sym^{d_r} V^*_{n_r}$ is a summand of the
$C_p$-module
\hbox{$\otimes_{i=1}^r \otimes^{d_i} V^*_{n_i}$} and that
$\Q[M_{n_1} \oplus M_{n_2} \oplus \dots \oplus M_{n_r}]_{(d_1,d_2,\dots,d_r)}
= \Sym^{d_1} M^*_{n_1} \otimes \Sym^{d_2} M^*_{n_2}  \otimes \dots \otimes \Sym^{d_r} M^*_{n_r}$ is a summand of the
$\Z$-module  $\otimes_{i=1}^r\otimes^{d_i} M^*_{n_i}$.
The projection onto these summands is given by the Reynolds operator $\Pi$ associated to Young subgroup
$\Sigma_{d_1,d_2,\dots,d_r} := \Sigma_{d_1} \times \Sigma_{d_2} \times \dots \times \Sigma_{d_r}$ where
$$\Pi = \Pi_{\Sigma_{d_1,d_2,\dots,d_r}} =
      \frac{1}{d_1! d_2! \cdots d_r!} \sum_{\tau \in \Sigma_{d_1,d_2,\dots,d_r} } \tau\ .
   $$

By Corollary~\ref{key}, every invariant
$f \in \field_p[V_{n_1} \oplus V_{n_2} \oplus \dots \oplus V_{n_r}]_{(d_1,d_2,\dots,d_r)}^{C_p}$  can be written
as a sum $f = f_0 + f_1$ where
 $f_0 =  \rho(F_0)$ for some $F_0 \in (\otimes_{j=1}^r \otimes^{d_j} M^*_{n_j}(\Z))^{\Z}$ and
$f_1 = \tr^{C_p}(F_1)$ for some $F_1 \in \otimes_{j=1}^r \otimes^{d_j} V^*_{n_j}$.
Therefore
\begin{align*}
  f &= \Pi(f) = \Pi(f_0 + f_1)=\Pi(f_0) + \Pi(f_1)\\
     & = \Pi(\rho(F_0)) + \Pi(\tr^{C_p}(F_1))\ .
\end{align*}
  Clearly $\Pi(\rho(F_0)) = \rho(\Pi(F_0))$.  Since the action of $\Sigma_{d_1,d_2,\dots,d_r}$ and $C_p$ on
  $\otimes_{i=1}^r \otimes^{d_i} V^*_{n_i}$ commute, we have
  $\Pi(\tr^{C_p}(F_1))=\tr^{C_p}(\Pi(F_1))$.
  Similarly the actions of $\Sigma_{d_1,d_2,\dots,d_r}$ and $\Z$ on $\otimes_{i=1}^r \otimes^{d_i} M^*_{n_i}$ commute and thus
   $\Pi(F_0)$ is a $\Z$ invariant since $F_0$ is.
  Therefore $f = \rho(\Pi(F_0)) + \tr^{C_p}(\Pi(F_1))$ where
  $ \Pi(F_0) \in \Q[M_{n_1} \oplus M_{n_2} \oplus \dots \oplus M_{n_r}]^{\Z}_{(d_1,d_2,\dots,d_r)}$
  and $\Pi(F_1) \in \fp[V_{n_1} \oplus V_{n_2} \oplus \dots \oplus V_{n_r}]_{(d_1,d_2,\dots,d_r)}$.
  Hence we have written $f$ as the sum of a integral invariant and a transfer.

Also note that Roberts' isomorphism implies that $\Pi(F_0) = \psi(h)$ for some
$h \in \C[R_1 \oplus R_{n_1-1} \oplus R_{n_2-1} \oplus \dots \oplus R_{n_r-1}]^{\SL_2(\C)}$.
\end{proof}

  Combining Therem~\ref{main} with the Periodicity Theorem we have a proof of the conjecture:
  \begin{theorem}\label{conj proved}
     Let $V = \oplus_{i=1}^r V_{n_i}$.  For each $i=1,2,\dots,r$, choose a generator $z_i$ of the cyclic module $C_p$-module
     $V^*_{n_i}$, i.e., choose $z_i \in V^*_{n_i} \setminus \Delta(V^*_{n_i})$.  Put $N_i := \norm(z_i)$.  Then
     $\field_p[V]^{C_p}$ is generated by $N_1,N_2,\dots,N_r$ together with a finite set of integral invariants and a finite
     set of transfer invariants.
  \end{theorem}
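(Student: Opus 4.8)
The plan is to assemble the theorem from two inputs already in hand: Theorem~\ref{main}, which handles every multidegree with all coordinates at most $p-1$, and the Periodicity Theorem, which handles every remaining multidegree; the only genuinely new ingredient needed is the passage from ``all integral invariants and all transfers'' to \emph{finite} sets, which will come from Noether's finiteness theorem.

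First I would invoke Noether's theorem to fix an integer $D$ such that $\field_p[V]^{C_p}$ is generated as a $\field_p$-algebra by multihomogeneous elements of total degree at most $D$. There are only finitely many multidegrees $(d_1,\dots,d_r)$ with $d_1+\dots+d_r\le D$, and for each the space $\field_p[V]_{(d_1,\dots,d_r)}$ is finite dimensional. Choosing a basis of each such space and applying the transfer to its elements yields a finite set $T_0$ of transfer invariants with the property that $\tr^{C_p}(\field_p[V]_{(d_1,\dots,d_r)})$ is spanned by the elements of $T_0$ lying in that multidegree, whenever $\sum d_i\le D$. Likewise the integral invariants in each such multidegree form a finite-dimensional subspace of $\field_p[V]_{(d_1,\dots,d_r)}$; collecting a basis of each gives a finite set $\mathcal I_0$ of integral invariants. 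Put $S:=\{N_1,\dots,N_r\}\cup\mathcal I_0\cup T_0$; I claim $\field_p[V]^{C_p}$ is generated by $S$.

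The claim reduces to showing, by induction on the total degree $d=d_1+\dots+d_r\le D$, that every multihomogeneous invariant $f\in\field_p[V]^{C_p}_{(d_1,\dots,d_r)}$ lies in the subalgebra $\langle S\rangle$ generated by $S$. For $d=0$ there is nothing to prove. For $d>0$ I split into the two exhaustive cases ``$d_i\le p-1$ for all $i$'' and ``$d_i\ge p$ for some $i$''. In the first case Theorem~\ref{main} writes $f=\rho(F')+\tr^{C_p}(F'')$ with $F'$ a $\Z$-invariant of multidegree $(d_1,\dots,d_r)$ and $F''\in\field_p[V]_{(d_1,\dots,d_r)}$; then $\rho(F')$ is an integral invariant in this multidegree, hence a $\field_p$-linear combination of elements of $\mathcal I_0$, and $\tr^{C_p}(F'')$ is a $\field_p$-linear combination of elements of $T_0$ by the spanning property above, so $f\in\langle S\rangle$. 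In the second case $d_i\ge p\ge p-n_i+1$, so by the Periodicity Theorem $\field_p[V]^\flat_{(d_1,\dots,d_r)}$ is a free $C_p$-module; writing $f=f^\sharp+f^\flat$ along the decomposition $\field_p[V]^{C_p}_{(d_1,\dots,d_r)}=\bigl(\field_p[V]^\sharp_{(d_1,\dots,d_r)}\bigr)^{C_p}\oplus\bigl(\field_p[V]^\flat_{(d_1,\dots,d_r)}\bigr)^{C_p}$, the invariants of the free summand coincide with $\Delta^{p-1}=\tr^{C_p}$ of that summand, so $f^\flat$ is a transfer lying in $\langle T_0\rangle$, while $f^\sharp$ lies in the degree-$(d_1,\dots,d_r)$ part of the ideal of $\field_p[V]^{C_p}$ generated by $N_1,\dots,N_r$, hence $f^\sharp=\sum_i N_i g_i$ with each $g_i$ multihomogeneous of total degree $d-p<d$ and $\le D$. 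By the induction hypothesis each $g_i\in\langle S\rangle$, so $f^\sharp\in\langle S\rangle$ and therefore $f\in\langle S\rangle$. Since $\field_p[V]^{C_p}$ is generated by multihomogeneous elements of total degree $\le D$, the claim gives $\field_p[V]^{C_p}=\langle S\rangle$, which is the statement of the theorem.

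Given Theorem~\ref{main} and the Periodicity Theorem, this argument is essentially bookkeeping, and the only point demanding care is the extraction of the finite sets $\mathcal I_0$ and $T_0$: without the Noether degree bound $D$ one would only conclude that $\field_p[V]^{C_p}$ is generated by the $N_i$ together with the (a priori infinite) collections of all integral invariants and all transfers, so it is precisely finiteness of $\field_p[V]^{C_p}$ that collapses the infinitely many multidegrees to finitely many relevant ones. The substantive difficulty of the overall result lies upstream, in the explicit lattice-level decompositions of $M_m\otimes M_n$ and $V_m\otimes V_n$ and the compatibility of reduction mod $p$ with them, which is what powers Theorem~\ref{main}.
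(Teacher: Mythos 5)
Your argument is correct, and its core reduction is the same as the paper's: Periodicity splits off the norm ideal and shows that any flat component in high multidegree is a transfer, while Theorem~\ref{main} handles the multidegrees with all $d_i\le p-1$ by writing each invariant as an integral invariant plus a transfer. Where you genuinely diverge is in how the \emph{finiteness} of the integral and transfer sets is extracted. You invoke Noether's finite generation to get a degree bound $D$ and then simply collect bases of all transfers and all integral invariants in the finitely many multidegrees of total degree at most $D$; this is valid (there is no circularity, since Noether's theorem is independent of the structural statement being proved), and your induction on total degree, with the two cases ``all $d_i\le p-1$'' and ``some $d_i\ge p$,'' closes correctly, using the paper's stated fact that $(\field_p[V]^{C_p})^\sharp$ is the ideal of $\field_p[V]^{C_p}$ generated by $N_1,\dots,N_r$. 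The paper instead avoids any a priori degree bound: it observes that the integral non-transfer generators are confined to the finite box $d_i\le p-n_i$, and it controls the transfers structurally, by taking the homogeneous system of parameters of Shank (norms together with transfers of degree $p-1$), letting $A$ be the polynomial algebra they generate, and using the Cohen--Macaulayness of $\field_p[V]$ (Hironaka decomposition $\field_p[V]=\oplus_k A h_k$) together with the fact that $\tr$ is an $A$-module map to conclude that the transfers $\tr(h_k)$ generate the transfer ideal as an $A$-module. The paper's route buys an explicit and much smaller description of which transfers are needed, which is what powers Corollary~\ref{explicit key} and the applications; your route is shorter and more elementary but non-constructive, since the Noether bound $D$ is unspecified and the resulting sets $\mathcal I_0$, $T_0$ are potentially far larger than necessary.
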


  \begin{proof}
Given $f \in \field_p[V]^{C_p}$ we may use the decomposition from the Periodicity Theorem to write $f = f^\sharp + f^\flat$ with
$f^\sharp = \sum_{i=1}^r f_i N_i$ where each $f_i \in \field_p[V]^{C_p}$ and $f^\flat \in (\field_p[V]^{C_p})^\flat$
Thus we may choose a generating set for $\field_p[V]^{C_p}$ consisting of elements of  $(\field_p[V]^{C_p})^\flat$ together with
the $r$ norms $N_1,N_2,\dots,N_r$.  Of course, we can and will choose the elements $(\field_p[V]^{C_p})^\flat$ to be multi-graded.
Given such a generator $f \in (\field_p[V]^{C_p}_{(d_1,d_2,\dots,d_r)})^\flat$ we see by the Periodicity Theorem that if there
exists an $i$ with $d_i > p - n_i$ then $\ell(f) = p$, i.e, $f = \Delta^{p-1}(F)$ for some $F \in \field_p[V]^\flat$.  Since
$\Delta^{p-1}(F) = (\sigma-1)^{p-1}(F) = (1 + \sigma + \sigma^2 + \dots + \sigma^{p-1})(F) = \tr(F)$,
we see that $d_i > p - n_i$ forces $f$ to be in the image of the transfer.    Note that the degree conditions
$d_i \leq p-n_i$ ensures that the vector space spanned by the integral non-transfer invariants is finite dimensional.

  Following \cite[Theorem~6.2]{shank}, we see that there is a homogeneous system of parameters for $\field_p[V]^{C_p}$ consisting
  of $N_1,N_2, \dots, N_r$ together with a number of transfers of degree $p-1$.  Let $A$ denote the polynomial algebra generated by
  this homogeneous system of parameters.  Since $\field_p[V]$ is Cohen-Macaulay we have the Hironaka decomposition
  $$\field_p[V] = \oplus_{k=1}^q A h_k$$ where $h_k \in \field_p[V]$ for all $k$.  Since the transfer is an $A$-module map
  $\{\tr(h_k) \mid k=1,2,\dots,q\}$ forms a set of $A$-module generators for the ideal $\tr(\field_p[V])$.
  These $q$ transfers together with the $\dim V$ many elements in the homogeneous system of parameters and the
  finitely many integral non-transfer invariants form a finite algebra generating set for $\field_p[V]^{C_p}$.
  \end{proof}

  The following more explicit formulation of the above theorem is useful.
  \begin{corollary}\label{explicit key}
     Let $V = \oplus_{i=1}^r V_{n_i}$.  For each $i=1,2,\dots,r$, choose a generator $z_i$ of the cyclic module $C_p$-module
     $V^*_{n_i}$, i.e., choose $z_i \in V^*_{n_i} \setminus \Delta(V^*_{n_i})$.  Put $N_i := \norm(z_i)$
     and $z_{ij} = \Delta^j(z_i)$ for all $1\leq i \leq r$ and $0 \leq j \leq n_i-1$.
     Suppose there exist invariants $f_{ij} \in \field[V]^G$ and positive integers $d_{ij}$ such
     $\LT(f_{ij}) = z_{ij}^{d_{ij}}$  for all $1\leq i \leq r$ and $1 \leq j \leq n_i-1$.
      Put $d_{0j}=p$ (since $\LT(N_i) = z_i^p$).   Then
     $\field_p[V]^{C_p}$ is generated by the following invariants
     \begin{itemize}
     \item   $N_1,N_2,\dots,N_r$
     \item  $f_{ij}$ with  $1\leq i \leq r$ and $1 \leq j \leq n_i-1$
     \item a finite set of integral invariants
     \item
     $\displaystyle\tr\left(\prod_{i=1}^r \prod_{j=0}^{n_i-1} z_{ij}^{a_{ij}}\right) \text{ with } 0 \leq a_{ij} < d_{ij} \text{ for all } 1\leq i \leq r \text{ and }1 \leq j \leq n_i-1$.
 \end{itemize}
  \end{corollary}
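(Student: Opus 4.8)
The plan is to bootstrap from Theorem~\ref{conj proved}, which already provides a generating set for $\field_p[V]^{C_p}$ built from $N_1,\dots,N_r$, a finite set of integral invariants, and a finite set of transfers; the only thing left to do is to trade that finite set of transfers for the explicit family of transfers of bounded monomials in the statement. Since the transfer is $\field_p$-linear, each transfer occurring in the generating set of Theorem~\ref{conj proved} is a $\field_p$-combination of transfers $\tr(M)$ of monomials $M$ in the variables $z_{ij}=\Delta^j(z_i)$. So I would reduce everything to the following claim: for every monomial $M$ in the $z_{ij}$, the invariant $\tr(M)$ lies in the $\field_p$-subalgebra generated by the $N_i$, the $f_{ij}$, and the transfers $\tr\bigl(\prod_{i,j}z_{ij}^{a_{ij}}\bigr)$ with $0\le a_{i0}<p$ and $0\le a_{ij}<d_{ij}$ for $1\le j\le n_i-1$. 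Call such a monomial \emph{reduced}; there are finitely many of them since all the exponents are bounded, and for a reduced $M$ the claim is trivial, so proving it recovers the corollary.

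The core of the argument is the standard leading-term reduction. If $M$ is not reduced, then $M=z_{ij}^{d_{ij}}M'$ for some index $(i,j)$ and some monomial $M'$, where I put $f_{i0}:=N_i$ (legitimate since $\LT(N_i)=z_{i0}^{\,p}$ and $d_{i0}=p$). Writing $f_{ij}=z_{ij}^{d_{ij}}+g_{ij}$, where every monomial of $g_{ij}$ is strictly smaller than $z_{ij}^{d_{ij}}$ in the graded reverse lexicographic order, and using that $f_{ij}$ is $C_p$-invariant so that $f_{ij}\tr(M')=\tr(f_{ij}M')$, I obtain
\[
  \tr(M)=\tr\bigl(z_{ij}^{d_{ij}}M'\bigr)=f_{ij}\,\tr(M')-\tr(g_{ij}M')\ .
\]
Here $\tr(M')$ is the transfer of a monomial of strictly smaller total degree than $M$, and $\tr(g_{ij}M')$ is a $\field_p$-combination of transfers $\tr(mM')$ with $\deg(mM')\le\deg M$ and $mM'<M$ in the term order --- the last point because the order is graded and multiplicative, so $m<z_{ij}^{d_{ij}}$ forces $mM'<z_{ij}^{d_{ij}}M'=M$.

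I would then finish by Noetherian induction, ordering monomials first by total degree and then by the graded reverse lexicographic order; because every graded component of $\field_p[V]$ is finite dimensional, this is a genuine well-ordering. For reduced $M$ there is nothing to prove; otherwise the displayed identity writes $\tr(M)$ in terms of some $f_{ij}$ together with transfers of monomials that are strictly smaller in this well-ordering, so the induction hypothesis rewrites each of those as a polynomial in the $N_i$, the $f_{ij}$, and reduced-monomial transfers, and hence so is $\tr(M)$. I do not anticipate a genuine obstacle here: the only step needing care is the termination of the reduction, which is exactly what the well-ordering of each graded piece supplies. If desired one may first replace each $f_{ij}$ by its degree-$d_{ij}$ homogeneous component (still $C_p$-invariant, still with leading term $z_{ij}^{d_{ij}}$) to make $g_{ij}$ homogeneous, but this is not needed, since lower-degree terms of $f_{ij}$ only produce error transfers $\tr(mM')$ of strictly smaller total degree.
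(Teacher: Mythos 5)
Your proposal is correct, and it accomplishes exactly what the paper's proof does — trading the unspecified transfers supplied by Theorem~\ref{conj proved} for transfers of the monomials lying under the staircase cut out by the lead terms $z_{ij}^{d_{ij}}$ — but by a different mechanism. The paper argues structurally: by \cite[Lemma~6.2.1]{CW} the lead-term hypothesis makes $\{N_1,\dots,N_r\}\cup\{f_{ij}\}$ a homogeneous system of parameters; letting $A$ be the polynomial algebra they generate, one has the decomposition $\field_p[V]=\oplus_{\gamma\in\Gamma}A\,\gamma$, where $\Gamma$ is precisely your set of reduced monomials; and since $\tr$ is an $A$-module map, the elements $\tr(\gamma)$, $\gamma\in\Gamma$, generate the ideal $\tr(\field_p[V])$ as an $A$-module, which together with Theorem~\ref{conj proved} gives the corollary. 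Your identity $\tr\bigl(z_{ij}^{d_{ij}}M'\bigr)=f_{ij}\,\tr(M')-\tr(g_{ij}M')$, iterated along the well-ordering of monomials, is in effect a direct, self-contained proof of the spanning half of that Hironaka-type decomposition, specialized to the image of the transfer; indeed it yields the same module statement $\tr(\field_p[V])=\sum_{\gamma\in\Gamma}A\,\tr(\gamma)$, since every reduction step multiplies by an element of $A$. What your route buys is economy of prerequisites: no appeal to the hsop criterion nor to freeness of $\field_p[V]$ over $A$, only invariance of the $f_{ij}$, multiplicativity of the term order, and its well-ordering (note that grevlex is already graded, so your two-step ordering is just the term order itself). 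What the paper's route buys is brevity given the cited structural facts, together with the stronger conclusion that $\Gamma$ is a free $A$-module basis rather than merely a spanning set — useful for minimality and Hilbert-series bookkeeping, though only the spanning half is needed for the generation statement proved here. The one cosmetic point to make explicit is that $\LT(f_{ij})=z_{ij}^{d_{ij}}$ is taken with lead coefficient $1$ (otherwise rescale $f_{ij}$), so that $g_{ij}:=f_{ij}-z_{ij}^{d_{ij}}$ really has all its terms strictly smaller; with that said, your argument is complete.
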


  \begin{proof}
     The hypotheses imply (by \cite[Lemma~6.2.1]{CW}) that the set
    $$\{N_1,N_2,\dots,N_r\} \cup  \{f_{ij} \mid 1\leq i \leq r, 1 \leq j \leq n_i-1\}$$
     forms a homogeneous system of parameters.
       Let $A$ denote the polynomial algebra generated by these $\dim V$ many invariants.   By Theorem~\ref{conj proved}, $\field_p[V]^{C_p}$ is generated by
    $A$ together with a finite set of integral invariants and some transfers.
    Let $\Gamma$ denote the set of monomials
    $\Gamma = \{\prod_{i=1}^r \prod_{j=0}^{n_i-1} z_{ij}^{a_{ij}} \mid 0 \leq a_{ij} < d_{ij} \}$.
     Then
    $$\field_p[V] = \oplus_{\gamma\in\Gamma} A \gamma\ .$$
    Thus $\{\tr(\gamma) \mid \gamma \in \Gamma\}$ is a set of $A$-module generators for the
    ideal $\tr(\field[V])$.
  \end{proof}

 \section{Applications}
 \renewcommand{\arraystretch}{1.2}
  We use Corollary~\ref{explicit key} to give generators for the invariant ring of a number
  of representations of $C_p$.

\subsection{The Invariant Ring $\field[V_2 \oplus V_4]^{C_p}$}
  We mentioned in the introduction that the $C_p$ representation $V_2 \oplus V_4$ is the only remaining reduced representation
  whose ring of invariants is likely to be computable by the SAGBI basis method originally developed by Shank.
  Here we will  find generators for this ring
  much more easily by using the proof of the conjecture.

    We need to find the ring of covariants of $R_1 \oplus R_3$.
      A method to find generators for this ring is given in \cite[\S 138A]{G-Y}.  Letting
 $L$ denote the linear form and $f$ the cubic form we have the following 13 generators for this ring of covariants.
 \begin{center}
\begin{tabular}{lcclll}
   Covariant & Order & Bi-degree& LM & LM(Source)\\ \hline
  $L$ & 1 & (1,0)& $a_0x$& $x_1$\\
   $f $& 3 & (0,1) & $b_0x^3$&$x_2$\\
  $H := (f,f)^2$ & 2 & (0,2) &$b_1^2 x^2$ &$y_2^2$\\
  $T := (f,H)^1$  & 3 & (0,3) & $b_1^3 x^3$ &$y_2^3$\\
  $\Delta := (H,H)^2$  & 0 & (0,4) & $b_1^3 b_3$&$y_2^2 z_2^2$\\
  $(f,L)^1$  & 2 & (1,1) & $a_1 b_0 x^2$&$x_1 y_2$\\
  $(f,L^2)^2$  & 1 & (2,1) & $a_1^2 b_0 x$&$x_1^2 z_2$\\
   $(f,L^3)^3$  & 0 & (3,1) & $a_1^3 b_0$&$x_1^3 w_2$\\
  $(H,L)^1$  & 1 & (1,2) & $a_1 b_1^2 x$&$x_1 y_2 z_2$\\
  $(H,L^2)^2$  & 0 & (2,2) & $a_1^2 b_1^2$&$ x_1^2 z_2^2 $\\
  $(T,L)^1$  & 2 & (1,3) & $a_1 b_1^3 x^2$&$ x_1 y_2^2 z_2 $\\
  $(T,L^2)^2$  & 1 & (2,3) & $a_1^2 b_1^3 x$&$ x_1^2 y_2 z_2^2 $\\
   $(T,L^3)^3$  & 0 & (3,3) & $a_1^3 b_1^3$&$ x_1^3 z_2^3 $\\
   \\
   \multicolumn{5}{c}{Table 10.1. {\bf Covariants of }{\boldmath $R_1 \oplus R_3$}}\\
\end{tabular}
\end{center}
Here we are using $\{x,y\}$ as a basis for the dual of the  first copy of $R_1$,
 $\{a_0,a_1\}$ as a basis for the dual of the second copy of $R_1$ and
$\{b_0,3b_1,3b_2,b_3\}$ as the basis for $R_3^*$.
Thus
$L = a_0 x + a_1 y$ and $f=b_0 x^3 + 3b_1 x^2 y + 3b_2 xy^2 + b_3 y^3$.
As in Section~\ref{classical}, these bases are chosen so that both $L$ and $f$ are
invariant.  In the column labelled ``LM'' we give the lead monomial of the covariant and in the
final column we give the lead monomial of the corresponding source.

Examining these lead terms we easily see that no one of these 13 covariants can be written as a polynomial in the other 12.
 Thus these 13 covariants minimally generate
$\C[R_1 \oplus R_1 \oplus R_3]^{\SL_2(\C)}$.  Applying Roberts' isomorphism and reducing modulo $p$ yields 13 integral invariants
in $\field[V_2 \oplus V_4]^{C_p}$.  Here $\{x_1,y_1\}$ is a basis of $V_2^*$ and $\{x_2,y_2,z_2,w_2\}$ is a basis
of $V_4^*$.   We use the graded reverse lexicographic ordering with
$w_2 > z_2 > y_2 >  y_1 > x_2 > x_1$.
The lead terms of these 13 $C_p$-invariants are given in the final column of Table~10.1.
We have integral invariants with lead terms $x_1, x_2$ and $y_2^2$.  The lead monomial of
$\tr(w_2^{p-1})$ is $z_2^{p-1}$.  Thus
$\field[V_2\oplus V_4]^{C_p}$ is generated by 13 integral invariants, the two norms $\norm(y_1),
\norm(w_2)$, and the family of transfers: $\tr(w_2^{d_2} z_2^{c_2} y_2^{b_2} y_1^{b_1})$ with
$0 \leq d_2 \leq p-1$, $0 \leq c_2 \leq p-2$, $0 \leq b_2 \leq 1$, $0 \leq b_1 \leq p-1$.

\subsection{The Invariant Ring $\field[V_3 \oplus V_4]^{C_p}$}
  Here we complete the computations discussed in Examples~\ref{eg1} and \ref{eg2} by finding generators for
$\field[V_3 \oplus V_4]^{C_p}$.
   Let $\phi=a_0 x^2 + 2 a_1 xy + a_2 y^2$
 and  $f=b_0 x^3 + 3b_1 x^2 y + 3b_2 xy^2 + b_3 y^3$ denote the quadratic and cubic forms respectively.
 Here we are using $\{x,y\}$ as a basis for $R_1^*$,
 $\{a_0,2a_1,a_2\}$ as a basis for  $R_2^*$ and
$\{b_0,3b_1,3b_2,b_3\}$ as the basis for $R_3^*$.
As in Section~\ref{classical}, these bases are chosen so that both $\phi$ and $f$ are
invariant.  In the column labelled ``LM'' we give the lead monomial of the covariant.

 Generators for the ring of covariants $\C[R_1 \oplus R_2 \oplus R_3]^{\SL_2(\C)}$ are given in \cite[\S 140]{G-Y}.
 There are 15 generators as follows.
 \begin{center}
\begin{tabular}{lcclll}
   Covariant & Order & Bi-degree& LM & LM(Source)\\ \hline
  $\phi$ & 2 & (1,0)& $a_0x^2$& $x_1$\\
  $f $& 3 & (0,1) & $b_0x^3$&$x_2$\\
  $H := (f,f)^2$ & 2 & (0,2) &$b_1^2 x^2$ &$y_2^2$\\
  $T := (f,H)^1$  & 3 & (0,3) & $b_1^3 x^3$ &$y_2^3$\\
  $\Delta := (H,H)^2$  & 0 & (0,4) & $b_1^2 b_2^2$&$y_2^2 z_2^2$\\
  $D:=(\phi,\phi)^2$  & 0 & (2,0) & $a_1^2$&$y_1^2$\\
    $(\phi,f)$  & 3 & (1,1) & $a_1 b_0 x^3$&$x_1 y_2$\\
  $(\phi,f)^2$  & 1 & (1,1) & $a_2 b_0 x$&$x_1 z_2$\\
   $(\phi^2,f)^3$  & 1 & (2,1) & $a_1 a_2 b_0 x$&$ x_1^2 w_2 $\\
  $(\phi^3,f^2)^6$  & 0 & (3,2) & $a_2^3 b_0^2$&$x_1^3 w_2^2$\\
  $(\phi,H)$  & 2 & (1,2) & $a_1 b_1^2 x^2$&$x_1 y_2 z_2$\\
  $(\phi,H)^2$  & 0 & (1,2) & $a_2 b_1^2$&$ x_1 z_2^2 $\\
  $(\phi,T)^2$  & 1 & (1,3) & $a_2 b_1^3 x$&$ x_1 y_2 z_2^2 $\\
  $(\phi^2,T)^3$  & 1 & (2,3) & $a_1 a_2 b_1^3 x$&$ x_1^2 z_2^3 $\\
  $(\phi^3,fT)^6$  & 0 & (3,4) & $a_2^3 b_0 b_1^3$&$ x_1^3 z_2^3 w_2 $\\
   \\
   \multicolumn{5}{c}{Table 10.2. {\bf Covariants of }{\boldmath $R_2 \oplus R_3$}}\\
\end{tabular}
\end{center}
 \newpage %improve bad page breaks later

Examining their lead terms we see that these 15 covariants minimally generate
$\C[R_1 \oplus R_2 \oplus R_3]^{\SL_2(\C)}$.  Applying Roberts' isomorphism and reducing modulo $p$ yields
15 integral invariants in $\field[V_3 \oplus V_4]^{C_p}$.  Here $\{x_1,y_1,z_1\}$ is a basis of $V_3^*$ and
$\{x_2,y_2,z_2,w_2\}$ is a basis of $V_4^*$.  We use the graded reverse lexicographic order with
$w_2 > z_2 > z_1 > y_2 > y_1 > x_2 > x_1$.
The lead terms of these 15 $C_p$-invariants are given in the final column of Table~10.2.
We have integral invariants with lead terms $x_1,x_2,y_1^2$ and $y_2^2$.
Since $\LM(\tr(w_2^{p-1}))=z_2^{p-1}$ we see by Corollary~\ref{explicit key} that
$\field[V_3 \oplus V_4]^{C_p}$ is generated by the 15 integral invariants, the two norms $\norm(z_1),
\norm(w_2)$ and the family of transfers
$\tr( w_2^{d_2} z_2^{c_2} y_2^{b_2} z_1^{c_1} y_1^{b_1})$ with
$0 \leq d_2 \leq p-1$, $0 \leq c_2 \leq p-2$, $0 \leq b_2 \leq  1$, $0 \leq c_1 \leq p-1$ and
$0 \leq b_1 \leq 2$.

 \subsection{Vector Invariants of $V_2$}
    Here we take an arbitrary positive integer $m$ and find generators for $\field_p[m\,V_2]^{C_p}$.
    Suppose the dual of the $i^\text{th}$ copy of $V_2$ is spanned by $\{x_i,y_i\}$ where
    $\Delta(y_i)=x_i$ and $\Delta(x_i)=0$.
    As discussed in the introduction, this ring of invariants was first computed by \name{Campbell} and \name{Hughes}(\cite{campbell-hughes}).
      Recently \name{Campbell}, \name{Shank} and \name{Wehlau}(\cite{CSW}) gave a simplified proof.  Here we give a shorter proof.
      Importantly, the proof in \cite{CSW} yields the stronger and computationally very useful result that the minimal generating set
      for $\field_p[m\,V_2]^{C_p}$ is also a SAGBI basis with respect to a certain term order. 
      
    The integral invariants $\field_p[m\,V_2]^{C_p}$ lift via the Roberts' isomorphism to invariants of $\C[(m+1)\,R_1]^{\SL_2(\C)}$.
    By the first fundamental theorem for $\SL_2(\C)$ (see \cite[\S11.1.2 Theorem~1]{Pr}  for example), this ring is generated by $\binomial{m+1}{2}$
    quadradic determinants $U_{i,j}$ with $0 \leq i < j \leq m$.  Applying Roberts' isomorphism (and reducing
    modulo $p$) yields the integral invariants $u_{0j}=x_j$ for $j=1,2,\dots,m$ and $u_{i,j} = x_i y_j - x_j y_i$
    for $1 \leq i < j \leq m$.
     Thus applying Corollary~\ref{explicit key} we see that $\field_p[m\,V_2]^{C_p}$ is generated by
    \begin{itemize}
           \item$x_j$  for  $j=1,2,\dots,m$;
           \item $\norm(y_i) = y_i^p - x_i^{p-1}y_i$  for $i=1,2,\dots,m$;
           \item $u_{i,j} = x_i y_j - x_j y_i$ for $1 \leq i < j \leq m$;
           \item $\tr(y_1^{a_1} y_2^{a_2} \cdots y_m^{a_m})$  where $0 \leq a_1,a_2,\dots,a_m < p$.
    \end{itemize}

   This set is not a minimal generating set.  \name{Shank} and \name{Wehlau} \cite{cmipg}
   showed that it becomes a minimal generating set if all the transfers
    of degree less than $2p-1$ are omitted.

 \subsection{Vector Invariants of $V_3$}
    Here we take an arbitrary positive integer $m$ and find generators for $\field_p[m\,V_3]^{C_p}$.
    This is the first computation of this ring of invariants.  
    It is possible to adapt the technique used in \cite{CSW} to give a SAGBI basis for $\field_p[m\,V_3]^{C_p}$ (cf.~ \cite{W}).    
    
    Suppose the dual of the $i^\text{th}$ copy of $V_3$ is spanned by $\{x_i,y_i,z_i\}$ where
    $\Delta(z_i)=y_i$, $\Delta(y_i)=x_i$ and $\Delta(x_i)=0$.

 \newpage
    The integral invariants here lift via the Roberts' isomorphism to invariants of
    $\C[R_1 \oplus m\,R_2]^{\SL_2(\C)}$, i.e., to the covariants of $m\,R_2$.\\
      Generators for this ring were found classically.
       See for example \cite[\S 139A]{G-Y}.
       This ring is generated the $\binomial{m+1}{2}$
    quadradic determinants $U_{i,j}=(\phi_i,\phi_j)^1$ with $0 \leq i < j \leq m$ together with
    $\binomial{m+1}{2}$ further
    quadratic polynomials $D_{i,j}=(\phi_i,\phi_j)^2$ with $1 \leq i \leq j \leq m$
    and $\binomial{m}{3}$ determinant invariants $\Det_{i,j,k}$ with $1 \leq i < j < k \leq m$.
    Apply Roberts' isomorphism we get
    \begin{align*}
      \psi(U_{i,j})&=u_{i,j} = x_i y_j - x_j y_i \text{ if } i\ne 0\\
      \psi(U_{0,j})&=x_j\\
      \psi(D_{i,j}) &= d_{i,j} = 2y_i y_j  - 2 z_i x_j - 2 x_i z_j - x_i y_j - y_i x_j\\
      \psi(\Det_{i,j,k}) &= {\rm det}_{i,j,k} = x_i y_j z_k - x_i z_j y_k  + z_i x_j y_k  - y_i x_j z_k +  y_i z_j x_k - z_i y_j x_k
    \end{align*}
   Since $\LM(d_{i,i})=y_i^2$ we have the following theorem.
   \begin{theorem}
    $\field_p[m\,V_3]^{C_p}$ is generated by
   \begin{itemize}
     \item $\norm(z_i)$ for $i=1,2,\dots,m$;
     \item $x_i$  for $i=1,2,\dots,m$;
     \item $u_{i,j}$ with $1 \leq i < j \leq m$;
     \item $d_{i,j}$ with $1 \leq i \leq j \leq m$;
     \item ${\rm det}_{i,j,k}$ with $1 \leq i < j < j \leq m$;
     \item $\tr(\prod_{i=1}^m y_i^{b_i} z_i^{c_i})$ with $0 \leq b_i \leq 1$ and $0 \leq c_i \leq p-1$.
   \end{itemize}
\end{theorem}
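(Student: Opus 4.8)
The plan is to obtain the theorem as a direct application of Corollary~\ref{explicit key} to $V=m\,V_3=\bigoplus_{i=1}^m V_3$. For the $i$th summand $V_3^*$ has basis $\{x_i,y_i,z_i\}$ with $\Delta(z_i)=y_i$, $\Delta(y_i)=x_i$ and $\Delta(x_i)=0$, so $z_i\notin\Delta(V_3^*)$ is an admissible generator; in the notation of Corollary~\ref{explicit key} we then have $z_{i0}=z_i$, $z_{i1}=y_i$, $z_{i2}=x_i$, $N_i=\norm(z_i)$, and $d_{i0}=p$ since $\LT(N_i)=z_i^p$.

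The first step is to supply the invariants $f_{i1},f_{i2}$ demanded by the hypothesis of Corollary~\ref{explicit key}. The socle element $x_i$ is $C_p$-invariant with $\LT(x_i)=x_i$, so $f_{i2}:=x_i$ works with exponent $d_{i2}=1$. For $f_{i1}$ I use the integral invariant $d_{i,i}$, a nonzero scalar multiple of $2y_i^2-4x_iz_i-2x_iy_i$, obtained below from the classical covariant $D_{i,i}=(\phi_i,\phi_i)^2$ via Roberts' isomorphism and reduction modulo $p$. In the graded reverse lexicographic order with $z_i>y_i>x_i$ one checks $y_i^2>x_iz_i$ and $y_i^2>x_iy_i$, so $\LM(d_{i,i})=y_i^2=z_{i1}^2$; rescaling to lead coefficient $1$ (harmless since $p\geq3$) gives $f_{i1}$ with exponent $d_{i1}=2$. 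Verifying this lead-monomial identification is the one point I would check carefully, since it is exactly what makes Corollary~\ref{explicit key} applicable here; it was already asserted in the sentence preceding the theorem.

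The second step is to pin down the finite set of integral invariants. By Theorem~\ref{main} together with the Roberts' isomorphism discussion of Section~\ref{classical}, the integral invariants in $\field_p[m\,V_3]^{C_p}$ are the images, under $\psi$ followed by reduction modulo $p$, of the ring of covariants $\C[R_1\oplus m\,R_2]^{\SL_2(\C)}$. Generators of that classical ring are known (see \cite[\S 139A]{G-Y}): the transvectants $U_{i,j}=(\phi_i,\phi_j)^1$ for $0\leq i<j\leq m$, the $D_{i,j}=(\phi_i,\phi_j)^2$ for $1\leq i\leq j\leq m$, and the determinant covariants $\Det_{i,j,k}$ for $1\leq i<j<k\leq m$. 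Applying $\psi$ and reducing modulo $p$ turns these generators into the invariants $x_j=\psi(U_{0,j})$, $u_{i,j}$ for $i<j$, $d_{i,j}$ for $i\leq j$, and ${\rm det}_{i,j,k}$ displayed in the statement, which therefore serve as the required finite set of integral invariants; in particular $f_{i2}=x_i$ and $f_{i1}$ (a multiple of $d_{i,i}$) already lie in this set and contribute no new generators.

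Finally I would unwind the transfer clause of Corollary~\ref{explicit key}. The relevant transfers are $\tr\bigl(\prod_{i=1}^m z_i^{a_{i0}}y_i^{a_{i1}}x_i^{a_{i2}}\bigr)$ with $0\leq a_{i0}<d_{i0}=p$, $0\leq a_{i1}<d_{i1}=2$ and $0\leq a_{i2}<d_{i2}=1$; the last inequality forces $a_{i2}=0$, leaving exactly the transfers $\tr\bigl(\prod_{i=1}^m y_i^{b_i}z_i^{c_i}\bigr)$ with $b_i\in\{0,1\}$ and $0\leq c_i\leq p-1$. Assembling $\norm(z_i)$, the integral invariants $x_i$, $u_{i,j}$, $d_{i,j}$, ${\rm det}_{i,j,k}$, and these transfers gives precisely the asserted generating set. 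The only genuinely external ingredient is the classical determination of the covariants of $m\,R_2$ that we quote; everything else is bookkeeping with Corollary~\ref{explicit key}, the lead-monomial identifications, and the explicit formulas for the $\psi$-images recorded just before the theorem.
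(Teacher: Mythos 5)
Your proposal is correct and follows essentially the same route as the paper: the paper also quotes the classical generators of the covariants of $m\,R_2$ from Grace--Young, pushes them through Roberts' isomorphism and reduction modulo $p$, and then invokes Corollary~\ref{explicit key} with $f_{i2}=x_i$ and $f_{i1}=d_{i,i}$, the key observation being $\LM(d_{i,i})=y_i^2$ (and $d_{i,i}=2y_i^2-4x_iz_i-2x_iy_i$, as you computed). Your unwinding of the exponent bounds $d_{i0}=p$, $d_{i1}=2$, $d_{i2}=1$ to get exactly the stated family of transfers is the same bookkeeping the paper leaves implicit.
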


 \subsection{Vector Invariants of $V_4$}
    Here we give generators for $\field_p[m\,V_4]^{C_p}$.
    This is the first computation of this ring of invariants.
    Suppose the dual of the $i^\text{th}$ copy of $V_4$ is spanned by $\{x_i,y_i,z_i,w_i\}$ where
     $\Delta(w_i)=z_i$, $\Delta(z_i)=y_i$, $\Delta(y_i)=x_i$ and $\Delta(x_i)=0$.

     Here we need to know generators for $\C[R_1 \oplus m\,R_3]^{\SL_2(\C)}$
 the covariants of $m\,R_3$.    The answer for $m=2$, taken from von Gall \cite{VG1},  is given in Table~10.3. %\ref{?}.

  F.~von Gall \cite{VG2} found generating covariants for $3R_3$.
However by results of \name{Schwarz} \cite[(1.22), (1.23) ]{Sch}(see also \cite{Weyl})
we may obtain all the generators of $\C[R_1 \oplus m\, R_3]^{\SL_2(\C)}$ from the
generators of $\C[R_1 \oplus 2\, R_3]^{\SL_2(\C)}$ by the classical process of polarization.
For a description of polarization see for example \cite[Chapter 3 \S 2 ]{Pr} or \cite[page 5]{Weyl}.
Grace and Young \cite[\S 257]{G-Y} also describe another procedure for finding generators for
the covariants of $m\, R_3$.

 \begin{center}
   \begin{tabular}{lcclll}
   Covariant & Order & Bi-degree& LM & LM(Source)\\ \hline
$f_1$&3&(1,0)&$a_0x^3$&$ x_1$\\
$f_2$&3&(0,1)&$b_0x^3$&$x_2$\\
$(f_1,f_2)^3$&0&(1,1)&$a_3b_0$&$x_1w_2$\\  %J
$H_{20}$&2&(2,0)&$a_1^2x^2$ &$y_1^2$\\
$H_{11}$&2&(0,2)&$a_2b_0x^2$ &$x_1z_2 $\\
$H_{02}$&2&(1,1)&$b_1^2x^2$ &$y_2^2$\\
$U_{12}:=(f_1,f_2)^1$&4&(1,1)&$a_1b_0x^4$ &$x_1y_2$\\   %ttheta
$(f_1,H_{20})^1$&3&(3,0)&$a_1^3x^3$ &$y_1^3$\\   %QQ
$(f_2,H_{02})^1$&3&(0,3)&$b_1^3x^3$ &$y_2^3$\\    %KK
$P:=(f_2,H_{20})^2$&1&(2,1)&$a_2^2b_0x$ &$y_1^2z_2$\\  %pp
$\pi:=(f_1,H_{02})^2$&1&(1,2)&$a_2b_1^2x$ &$x_1z_2^2$\\   %pi
$(f_1,H_{02})^1$&3&(1,2)&$a_1b_1^2x^3$ &$x_1z_2y_2$\\  %xi
$(f_2,H_{20})^1$&3&(2,1)&$a_1a_2b_0x^3$ &$y_1^2y_2$\\  %zeta
$(H_{20},H_{20})^2$&0&(4,0)&$a_1^2a_2^2$ &$z_1^2y_1^2$\\  %A
$(H_{02},H_{02})^2$&0&(0,4)&$b_1^2b_2^2$ &$z_2^2y_2^2$\\   %B
$(H_{20},H_{02})^2$&0&(2,2)&$a_3^2b_0^2$ &$x_1^2w_2^2$\\   %C
$(H_{20},H_{11})^2$&0&(3,1)&$a_2^3b_0$ &$y_1^3w_2$\\   %D
$(H_{02},H_{11})^2$&0&(1,3)&$a_3b_1^3$ &$x_1z_2^3$\\   %E
$(f_1,P)^1$&2&(3,1)&$a_1a_2^2b_0x^2$ &$y_1^2x_1w_2$\\    %lambda
$(f_2,\pi)^1$&2&(1,3)&$a_3b_0b_1^2x^2$ &$x_1z_2^2y_2$\\    %mu
$(H_{20},H_{02})^1$&2&(2,2)&$a_1a_2b_1^2x^2$ &$y_1^2z_2y_2$\\   %nu
$(H_{20},P)^1$&1&(4,1)&$a_1a_2^3b_0x$ &$y_1^4w_2$\\    %ss
$(H_{20},\pi)^1$&1&(3,2)&$a_1a_2^2b_1^2x$ &$y_1^3z_2^2$\\  %tt
$(H_{02},P)^1$&1&(2,3)&$a_2^2b_1^3x$ &$y_1^2z_2^2y_2$\\   %sigma
$(H_{02},\pi)^1$&1&(1,4)&$a_3b_1^4x$ &$x_1z_2^3y_2$\\   %tau
$(P,\pi)^1$&0&(3,3)&$a_2^2a_3b_0b_1^2$ &$y_1^2x_1w_2z_2^2$\\  %omega
   \multicolumn{5}{c}{Table 10.3. {\bf Covariants of }{\boldmath $R_3 \oplus R_3$}}\\
\end{tabular}
\end{center}

  It is straight forward to verify that polarization commutes with reduction modulo $p$.
This implies that all the integral invariants of $\field_p[m\,V_4]^{C_p}$
are obtained from polarizing the 26 integral invariants in $\field_p[2\,V_4]^{C_p}$.
In summary, if we let ${w_i,z_i,y_i,x_i}$ denote a basis of the dual of the $i^\text{th}$
copy of $V_4$ where $\Delta(w_i)=z_i$, $\Delta(z_i)=y_i$, $\Delta(y_i)=x_i$, $\Delta(x_i)=0$ we have the following.
\newpage
\begin{theorem}
$\field_p[m\,V_4]^{C_p}$ is generated by
   \begin{itemize}
     \item $\norm(w_i)$ for $i=1,2,\dots,m$;
     \item integral invariants arising from the polarizations of the 26 sources of the $SL_2(\C)$ invariants listed in Table~10.3.
        \item $\tr(\prod_{i=1}^m y_i^{b_i} z_i^{c_i} w_i^{d_i})$ with $0 \leq b_i \leq 1$, $0 \leq c_i \leq p-1$ and $0 \leq d_i \leq p-1$.
   \end{itemize}
\end{theorem}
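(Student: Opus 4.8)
The plan is to deduce the theorem directly from Corollary~\ref{explicit key} applied to $V=m\,V_4$. Fix on the dual of the $i^\text{th}$ copy of $V_4$ the basis $\{x_i,y_i,z_i,w_i\}$ with $\Delta(w_i)=z_i$, $\Delta(z_i)=y_i$, $\Delta(y_i)=x_i$ and $\Delta(x_i)=0$, so that $w_i$ generates the cyclic module $V_4^*$, and order the variables by graded reverse lexicographic order with $w_1>z_1>y_1>x_1>w_2>\cdots>x_m$. To invoke Corollary~\ref{explicit key} I need three things: a homogeneous system of parameters consisting of the norms $\norm(w_i)$ together with invariants $f_{ij}$ whose lead monomials are pure powers of $z_i$, $y_i$ and $x_i$; a finite generating set for the subring of integral invariants; and the corresponding family of transfers. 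The corollary then delivers a generating set of exactly the advertised shape, so the argument is essentially a matter of assembling these ingredients.

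For the integral invariants I would argue as in the text. By Roberts' isomorphism the integral invariants of $\field_p[m\,V_4]^{C_p}$ are the reductions modulo $p$ of the sources of the covariants in $\C[R_1\oplus m\,R_3]^{\SL_2(\C)}$, the ring of covariants of $m\,R_3$. Von Gall's computation (\cite{VG1}) shows that $\C[R_1\oplus 2\,R_3]^{\SL_2(\C)}$ is generated by the $26$ covariants of Table~10.3, and by Schwarz's polarization theorem (\cite[(1.22), (1.23)]{Sch}, cf.~\cite{Weyl}) the ring $\C[R_1\oplus m\,R_3]^{\SL_2(\C)}$ is generated by the polarizations of those $26$ covariants. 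Since polarization is carried out by differential operators with integer coefficients, it commutes both with the formation of sources and with reduction modulo $p$; hence the ring of integral invariants of $\field_p[m\,V_4]^{C_p}$ is generated by the polarizations of the $26$ sources whose lead monomials are recorded in the last column of Table~10.3.

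For the homogeneous system of parameters I would, for each $i=1,\dots,m$, take $\norm(w_i)$, with $\LT(\norm(w_i))=w_i^p$; the diagonal polarization of the source of $f_1$, whose lead monomial is $x_i$; the diagonal polarization of the source of $H_{20}$, i.e.\ the reduction of the source of $(f_i,f_i)^2$, whose lead monomial is $y_i^2$; and the transfer $\tr(w_i^{p-1})$. That $\LT(\tr(w_i^{p-1}))=z_i^{p-1}$ follows from the expansion $\sigma^k(w_i)=w_i+kz_i+\binomial{k}{2}y_i+\binomial{k}{3}x_i$ together with the congruences $\sum_{k=0}^{p-1}k^{\,j}\equiv 0\pmod p$ for $0\le j\le p-2$ and $\sum_{k=0}^{p-1}k^{\,p-1}\equiv -1\pmod p$, which kill every degree-$(p-1)$ monomial except $z_i^{p-1}$. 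These $4m$ invariants have as lead monomials the pure powers $w_i^p$, $z_i^{p-1}$, $y_i^2$, $x_i$ of the $4m$ variables, so by \cite[Lemma~6.2.1]{CW} they form a homogeneous system of parameters. Feeding this into Corollary~\ref{explicit key} (with $d_{ij}$ equal to $p$, $p-1$, $2$, $1$ for the variables $w_i$, $z_i$, $y_i$, $x_i$) yields precisely the generating set in the statement: the norms $\norm(w_i)$; the integral invariants, which are the polarized sources listed in Table~10.3 (and include the $f_i$ and $(f_i,f_i)^2$ used above); and the transfers $\tr(\prod_{i=1}^m y_i^{b_i}z_i^{c_i}w_i^{d_i})$ with $0\le b_i\le 1$, $0\le c_i\le p-1$, $0\le d_i\le p-1$ (the corollary only forces $c_i\le p-2$, but enlarging the family does no harm).

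I expect no real difficulty in this assembly; the genuinely hard input is the classical invariant theory being imported — the correctness and completeness of von Gall's list of $26$ covariants of $2\,R_3$ (classical covariant computations of binary forms of this size are notoriously delicate) and the applicability of Schwarz's polarization theorem in passing from $2\,R_3$ to $m\,R_3$. Granting those, the only calculations of substance on our side are the lead-term computation for $\tr(w_i^{p-1})$ indicated above and the routine verification that, with the classical generators taken in a primitive integral normalization, the polarized sources generate the integral invariants after reduction modulo $p$ (and not merely over $\C$); neither of these is deep.
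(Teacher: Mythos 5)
Your proposal is correct and follows essentially the same route as the paper: Corollary~\ref{explicit key} applied with the norms, the reductions of von Gall's 26 sources from Table~10.3 propagated to $m\,R_3$ via Schwarz's polarization theorem (using that polarization commutes with reduction mod $p$), and $\tr(w_i^{p-1})$ with lead term $z_i^{p-1}$ completing the homogeneous system of parameters — the last point being exactly what the paper's subsequent remark supplies by citing Shank, which you instead verify directly by the power-sum computation. Your observation that the stated bound $c_i\le p-1$ is just a harmless enlargement of the $c_i\le p-2$ the corollary actually yields matches the paper's remark as well.
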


\begin{remark}
  \name{Shank} showed (\cite[Theorem~3.2]{shank}) that $\LT(\tr(w_i^{p-1}))=z_i^{p-1}$.  Thus we may use $\tr(w_i^{p-1})$ in the 
  role of $f_{i1}$ when we apply Corollary~\ref{explicit key} and hence we have $d_{i1}=p-1$ for all $i=1,2,\dots,m$.  This implies that we may restrict the values of
  the $c_i$ to the range $0 \leq c_i \leq p-2$  in the third family of generators in the above theorem.
\end{remark}

\subsection{Other Representations of $C_p$}
   There are a number of other $SL_2(\C)$ representations for which generators of the ring of covariants are known and thus for which we may
   compute the ring of invariants for the corresponding representation of $C_p$ .
   Here we list some of these representations.

   In 1869, Gordan \cite{G} computed generators for the covariants of the quintic, $R_5$ and the sextic $R_6$.  Grace
   and Young \cite[\S116, \S 134]{G-Y} list these generators.
   In the 1880's F.~von Gall gave generators for the covariants of the septic $R_7$ \cite{vG7}
   and the octic $R_8$ \cite{vG8}.
   Recently  L.~Bedratyuk computed generators for the covariants of the octic \cite{B8} and minimal
   generators for the  covariants of the septic \cite{B7}.   Thus we may list generators for the invariants of
   $V_6$,  $V_7$, $V_8$ and $V_9$.  Although Sylvestor \cite{Syl} published a putative list of generators for the
   covariants of the nonic $R_9$, a recent computation of the invariants of the nonic by
  A. Brouwer and M. Popoviciu,   \cite{B-P} has shown Sylvester's table to be incorrect.  The same two authors 
  have also shown \cite{B-P10} that the ring of invariants of the decimic is generated by 106 invariants which they have 
  constructed.
   Grace and Young \cite[\S 138, \S138A]{G-Y} give a method for obtaining generating covariants for $W\oplus R_1$ and
   $W \oplus R_2$ from the generating covariants of any representation $W$.

\bigskip
\footnotesize
\noindent\textit{Acknowledgments.}
 I thank R.J. Shank, Mike Roth and Gerry Schwarz for many helpful discussions.
  I also thank Megan Wehlau for a number of useful late night conversations which were the genesis of this work.
%I thank the anonymous referee for some useful suggestions.
 This research is supported by grants from ARP and NSERC.

\end{document}